\definecolor{gray}{gray}{0.8}
\definecolor{Gray}{gray}{0.5}
\numberwithin{equation}{section} 
\newtheorem{theorem}{Theorem}[section]
\newtheorem{lemma}[theorem]{Lemma} 
\newtheorem{corollary}[theorem]{Corollary}
\newtheorem{proposition}[theorem]{Proposition} 
\newtheorem{remark}[theorem]{Remark}
\newtheorem{example}[theorem]{Example}
\newtheorem{definition}[theorem]{Definition}
\newtheorem*{theoremA*}{Theorem A}
\newtheorem*{theoremB*}{Theorem B}
\def\C{\mathbb C}
\def\R{\mathbb R}
\def\Q{\mathbb Q}
\def\Z{\mathbb Z}
\def\P{\mathbb P}
\def\a{a}
\def\b{b}
\def\c{c}
\def\d{d}
\DeclareMathOperator{\wt}{wt}
\DeclareMathOperator{\Sym}{Sym}
\DeclareMathOperator{\Hom}{Hom}
\DeclareMathOperator{\Lie}{Lie}
\DeclareMathOperator{\pt}{pt}
\DeclareMathOperator{\sgn}{sgn}
\DeclareMathOperator{\Ind}{Ind}
\DeclareMathOperator{\height}{ht}
\DeclareMathOperator{\Spec}{Spec}
\DeclareMathOperator{\RSpec}{\bf{Spec}}
\DeclareMathOperator{\codim}{codim}
\DeclareMathOperator{\Pic}{Pic}
\DeclareMathOperator{\rank}{rank}
\DeclareMathOperator{\Hess}{Hess}
\DeclareMathOperator{\Vol}{Vol}
\newcommand{\into}{\hookrightarrow}
\newcommand{\onto}{\twoheadrightarrow}
\newcommand{\red}[1]{\textcolor{red}{#1}}
\newcommand{\w}[1]{w_{#1}}
\newcommand{\rb}[1]{k_{#1}}
\newcommand{\lb}[1]{k_{#1,-}}
\newcommand{\rp}[1]{p_{#1}}
\newcommand{\lp}[1]{p_{#1,-}}
\newcommand{\m}[1]{m_{#1}}
\newcommand{\q}[1]{q_{#1}}
\newcommand{\LL}[1]{L_{#1}}
\newcommand{\DD}[1]{D({#1})}
\newcommand{\Sn}{\mathfrak{S}_n}
\newcommand{\Pet}[1]{Pet_{#1}}
\newcommand{\Perm}[1]{Perm_{#1}}
\begin{document}

\title[Peterson Schubert calculus in type A]{Geometry of Peterson Schubert calculus in type A and left-right diagrams}
\author {Hiraku Abe}
\address{Faculty of Science, Department of Applied Mathematics, Okayama University of
Science, 1-1 Ridai-cho, Kita-ku, Okayama, 700-0005, Japan}
\email{hirakuabe@globe.ocn.ne.jp}

\author {Tatsuya Horiguchi}
\address{Osaka City University Advanced Mathematical Institute, Sumiyoshi-ku, Osaka 558-8585, Japan}
\email{tatsuya.horiguchi0103@gmail.com}

\author {Hideya Kuwata}
\address{Kindai University Technical College, 7-1 Kasugaoka, Nabari, Mie 518-0459, Japan}
\email{hideya0813@gmail.com}

\author {Haozhi Zeng}
\address{School of Mathematics and Statistics, Huazhong University of Science and Technology, Wuhan, 430074, P.R. China}
\email{zenghaozhi@icloud.com} 

\begin{abstract}
We introduce an additive basis of the integral cohomology ring of the Peterson variety which reflects the geometry of certain subvarieties of the Peterson variety. We explain the positivity of the structure constants from a geometric viewpoint, and provide a manifestly positive combinatorial formula for them. We also prove that our basis coincides with the additive basis introduced by Harada-Tymoczko.
\end{abstract}

\maketitle

\section{Introduction}
Let $n$ be a positive integer and $Fl_n$ the full-flag variety of $\C^n$. Namely, $Fl_n$ is the collection of nested sequences of linear subspaces of $\C^n$ given as follows:
\begin{align*}
 Fl_n = \{V_{\bullet}=(V_1\subset V_2\subset \cdots \subset V_{n}=\C^n) \mid \dim_{\C}V_i=i \ (1\le i\le n)\}.
\end{align*}
Let $N$ be an $n\times n$ regular nilpotent matrix viewed as a linear map $N\colon \C^n\rightarrow \C^n$. The Peterson variety $\Pet{n}\subseteq Fl_n$ is defined by
\begin{align*}
 \Pet{n} \coloneqq \{V_{\bullet} \in Fl_n \mid NV_i\subseteq V_{i+1} \ (1\le i\le n-1)\},
\end{align*}
where $NV_i$ denotes the image of $V_i$ under the map $N\colon \C^n\rightarrow \C^n$. It was introduced by Dale Peterson to study the quantum cohomology ring of $Fl_n$, and it has been appeared in several contexts (e.g.\ \cite{AHMMS,Ba,ha-ho-ma,Kostant,Rietsch06}).

For a permutation $w\in\mathfrak{S}_n$, let $X_w\subseteq Fl_n$ be the Schubert variety associated with $w$, and $\Omega_w\subseteq Fl_n$ the dual Schubert variety associated with $w$. We denote by $[n-1]$ the set of integers $1,2,\ldots,n-1$. For a subset $J\subseteq[n-1]$, let $w_J\in\mathfrak{S}_n$ be the longest element of the Young subgroup $\mathfrak{S}_J$ of the permutation group $\Sn$ associated with $J$ (see Section~\ref{subsect: combi} for details), and set
\begin{align*}
X_{J} 
\coloneqq X_{w_J}\cap \Pet{n} \quad \text{and} \quad \Omega_{J} 
\coloneqq \Omega_{w_J}\cap \Pet{n}.
\end{align*}
Then $X_J$ and $\Omega_J$ in $\Pet{n}$ play similar roles to that of Schubert varieties and dual Schubert varieties in $Fl_n$, and provide important information on the topology of $\Pet{n}$.

In this paper, we construct an additive basis $\{\varpi_J\mid J\subseteq[n-1]\}$ of the integral cohomology ring $H^*(\Pet{n};\Z)$ which reflects the geometry of $X_J$ and $\Omega_J$ (Theorem~\ref{thm: Z-basis of cohomology of Peterson}). As a consequence, we may consider the structure constants for the multiplication rule:
\begin{align}\label{intro:100}
 \varpi_J\cdot \varpi_K = \sum_{L\subseteq[n-1]} d_{JK}^L \varpi_L , \qquad d_{JK}^L\in \Z.
\end{align}
It turns out that all $d_{JK}^L$ are non-negative integers, and we give a geometric proof of this positivity (Proposition~\ref{prop: positivity}). We also provide a manifestly positive combinatorial formula for $d_{JK}^L$ (Theorem~\ref{theorem:LeftRight_diagram}) in terms of \textit{left-right diagrams}, which we introduce in this paper. 

To find our formula for $d_{JK}^L$, 
we prove several properties of the cohomology classes $\varpi_J$ which are inherited from the geometry of $X_J$ and $\Omega_J$.
In particular, writing $\Omega_J$ as an intersection of divisors on $\Pet{n}$ provides the geometric idea behind our formula for $d_{JK}^L$ in terms of left-right diagrams.

We also show that our basis $\{\varpi_J\mid J\subseteq[n-1]\}$ coincides with the additive basis of the cohomology ring $H^*(\Pet{n};\C)$ with $\C$-coefficients introduced by Harada-Tymoczko (\cite{HaTy}). Their basis is obtained by taking restriction of certain Schubert classes to $\Pet{n}$, and it is called the \textit{Peterson Schubert basis}. It has been studied by Bayegan-Harada (\cite{BaHa}), Drellich (\cite{Dre2}) and Goldin-Gorbutt (\cite{GoGo}). In \cite{GoGo}, Goldin and Gorbutt gave combinatorial formulas for the structure constants of Harada-Tymoczko's basis (in a certain equivariant setting) which are manifestly positive and integral. 
Thus, after taking the non-equivariant limit, their formulas and ours both describe the same structure constants, but these formulas have different perspectives; 
their approach is mostly combinatorial whereas our approach is based on the geometry of $X_J$ and $\Omega_J$. We include a short comparison of their formulas and ours in Section~\ref{sec: relations to other works}.

Interestingly, our computations match with those of Berget-Spink-Tseng (\cite[Sect.~7]{Berget-Spink-Tseng}) on a certain subring of the cohomology ring of a toric variety which is called the permutohedral variety.
One of their results can be interpreted as a formula describing the structure constants $d_{JK}^L$ as products of \textit{mixed Eulerian numbers} which were introduced and studied by Postnikov (\cite{Postnikov09}).
With this connection in mind, our formula (Theorem~\ref{theorem:LeftRight_diagram}) for $d_{JK}^L$ can also be thought as computing some products of mixed Eulerian numbers by using the geometry of $\Pet{n}$.
We explain this connection in Section~\ref{sec: relations to other works} including the relations with the works of Nadeau-Tewari (\cite{Nadeau-Tewari}) and the second author (\cite{Horiguchi21}).

\bigskip
\noindent \textbf{Acknowledgments}.
We are grateful to Mikiya Masuda for his support and encouragement. This research is supported in part by Osaka City University Advanced Mathematical Institute (MEXT Joint Usage/Research Center on Mathematics and Theoretical Physics): the topology and combinatorics of Hessenberg varieties. The first author is supported in part by JSPS Grant-in-Aid for Early-Career Scientists: 18K13413. The second author is supported in part by JSPS Grant-in-Aid for Young Scientists: 19K14508. The fourth author is supported in part by NSFC: 11901218.

\bigskip

\section{Basic notations}\label{sec: Basic notations}
In this section, we recall some terminologies which will be used in this paper.

\subsection{Combinatorics on the Dynkin diagram of type A}\label{subsect: combi}
Let $n(\ge2)$ be a positive integer. We use the notation $[n-1]\coloneqq\{1,2,\ldots,n-1\}$, and we regard it as the set of vertices of the Dynkin diagram of type $A_{n-1}$ for the rest of the paper. Namely, two vertices $i,j\in[n-1]$ are connected by an edge if and only if $|i-j|=1$. See Figure~\ref{pic: Dynkin diagram}.

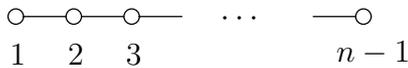
\begin{figure}[htbp]
\[
{\unitlength 0.1in%
\begin{picture}(18.8000,2.0000)(15.6000,-15.2000)%
%
\special{pn 8}%
\special{ar 1600 1400 40 40 0.0000000 6.2831853}%
%
\special{pn 8}%
\special{ar 1900 1400 40 40 0.0000000 6.2831853}%
%
\special{pn 8}%
\special{ar 2200 1400 40 40 0.0000000 6.2831853}%
%
\special{pn 8}%
\special{pa 2160 1400}%
\special{pa 1940 1400}%
\special{fp}%
%
\special{pn 8}%
\special{pa 1860 1400}%
\special{pa 1640 1400}%
\special{fp}%
%
\special{pn 8}%
\special{ar 3400 1400 40 40 0.0000000 6.2831853}%
%
\special{pn 8}%
\special{pa 3360 1400}%
\special{pa 3140 1400}%
\special{fp}%
%
\special{pn 8}%
\special{pa 2460 1400}%
\special{pa 2240 1400}%
\special{fp}%
\put(26.6000,-14.5000){\makebox(0,0)[lb]{$\cdots$}}%
\put(15.7000,-16.5000){\makebox(0,0)[lb]{$1$}}%
\put(21.7000,-16.5000){\makebox(0,0)[lb]{$3$}}%
\put(18.7000,-16.5000){\makebox(0,0)[lb]{$2$}}%
\put(32.6000,-16.5000){\makebox(0,0)[lb]{$n-1$}}%
\end{picture}}%
\]
\caption{The Dynkin diagram of type $A_{n-1}$.}
\label{pic: Dynkin diagram}
\end{figure}

We regard each subset $J\subseteq[n-1]$ as a full-subgraph of the Dynkin diagram appeared above. We may decompose it into the connected components: 
\begin{align}\label{eq: connected components}
 J = J_1 \sqcup J_2 \sqcup \cdots \sqcup J_{m},
\end{align}
where each $J_k\ (1\le k\le m)$ is the set of vertices of a maximal connected subgraph of $J$. To determine each $J_k$ uniquely, we require that elements of $J_k$ are less than elements of $J_{k'}$ when $k< k'$.

\begin{example}
Let $n=10$ and $J=\{1,2,4,5,6,9\}$. Then we have 
\begin{align*}
 J = \{1,2\} \sqcup\{4,5,6\} \sqcup\{9\} = J_1\sqcup J_2\sqcup J_3.
\end{align*}
\end{example}

\vspace{5pt}

For $J\subseteq[n-1]$, let us introduce a Young subgroup given by
\begin{align*}
 \mathfrak{S}_J \coloneqq \mathfrak{S}_{J_1}\times \mathfrak{S}_{J_2}\times \cdots \mathfrak{S}_{J_m}
 \subseteq \Sn,
\end{align*}
where each $\mathfrak{S}_{J_k}$ is the subgroup of $\Sn$ generated by the simple reflections $s_i$ for all $i\in J_k$. Let $w_J$ be the longest element of $\mathfrak{S}_J$, i.e., \ 
\begin{align}\label{eq: def of wJ}
 w_J \coloneqq w_0^{(J_1)} w_0^{(J_2)} \cdots w_0^{(J_{m})} \in \mathfrak{S}_J, 
\end{align}
where each $w_0^{(J_k)}$ is the longest element of the permutation group $\mathfrak{S}_{J_k}$ $(1\le k\le m)$. 

\begin{example}\label{eg: wJ}
{\rm
Let $n=10$ and $J=\{1,2\} \sqcup\{4,5,6\} \sqcup\{9\}$ as above. By identifying the permutation $w_J$ with its permutation matrix, we have 
\begin{align*}
 w_J 
 = w_0^{(J_1)} w_0^{(J_2)} w_0^{(J_3)} 
 = (s_1s_2s_1) (s_4s_5s_6s_4s_5s_4) (s_9)
 =
 \left(
 \begin{array}{@{\,}ccc|cccc|c|cc@{\,}}
     & & 1 & & & & & & & \\
     & 1 & & & & & & & & \\ 
    1 & & & & & & & & & \\ \hline
     & & & & & & 1& & &  \\
     & & & & & 1 & & & & \\
     & & & & 1 & & & & & \\
     & & & 1 & & & & & & \\ \hline
     & & & & & & & 1 & & \\ \hline
     & & & & & & & & & 1 \\
     & & & & & & & & 1 & 
 \end{array}
 \right).
\end{align*}}
\end{example}

\vspace{10pt}

We can identify each permutation $w\in\Sn$ with its permutation flag $V_{\bullet}\in Fl_n$ defined by $V_i =\langle e_{w(1)}, e_{w(2)},\ldots, e_{w(i)}\rangle$ for $1\le i\le n$, where $e_1,e_2,\ldots,e_n$ denotes the standard basis of $\C^n$, and the right hand side is the linear subspace of $\C^n$ spanned by $e_{w(1)}, e_{w(2)},\ldots, e_{w(i)}$. Using this identification, we explain how the permutations $w_J$ are related to the Peterson variety. Let ${\rm GL}_n(\C)$ be the general linear group of invertible $n\times n$ complex matrices. Let $T\subseteq {\rm GL}_n(\C)$ be the maximal torus consisting of diagonal matrices. Let us identify $\C^{\times}$ with a subgroup of $T$ as follows: 
\begin{align}\label{eq: C-starsubgroup}
\C^{\times} = 
\left\{
\left.
\begin{pmatrix}
g & & & \\
 & g^2 & & \\
 & & \ddots & \\
 & & & g^n 
\end{pmatrix}
\in T\ 
\right| \ g\in \C^{\times}
\right\}.
\end{align}
The flag variety $Fl_n$ admits a natural ${\rm GL}_n(\C)$-action by regarding each element $g\in {\rm GL}_n(\C)$ as an automorphism $g\colon \C^n\rightarrow \C^n$. Restricting this ${\rm GL}_n(\C)$-action on $Fl_n$ to the above subgroup $\C^{\times}$, it is well-known that the fixed point set $(Fl_n)^{\C^{\times}}$ is the set of the permutation flags, i.e.\ $(Fl_n)^{\C^{\times}}=\Sn$ (e.g.\ \cite[The proof of Lemma~2 in Sect.~10.1]{fult97}). It is straightforward to see that this $\C^{\times}$-action on $Fl_n$ preserves $\Pet{n}$, and it was shown in \cite{HaTy} that the fixed point set $(\Pet{n})^{\C^{\times}}$ is given by
\begin{align}\label{eq: fixed points of Pet}
(\Pet{n})^{\C^{\times}} =\{w_J \in\Sn \mid J\subseteq [n-1]\}.
\end{align}
Because of this relation, the combinatorics of $w_J$ will be important to understand the structure of $\Pet{n}$.

For $1\le i\le n-1$, let $s_i\in\mathfrak{S}_n$ be the simple reflection which interchanges $i$ and $i+1$. We denote by $\le$ the Bruhat order on $\mathfrak{S}_n$, that is, we have $u\le v$ $(u,v\in\mathfrak{S}_n)$ if and only if a reduced expression of $u$ is a subword of a reduced expression of $v$.
\begin{lemma}\label{lem: So-san's lemma 1}
For $J\subseteq[n-1]$ and $1\le i\le n-1$, we have 
$s_i\le w_J$ if and only if $i\in J$.
\end{lemma}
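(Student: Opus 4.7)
My plan is to reduce the lemma to the assertion that the set of simple reflections appearing in any reduced expression of $w_J$ is exactly $\{s_j\mid j\in J\}$, and then to invoke the subword characterization of Bruhat order in both directions.

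First I would construct a distinguished reduced expression of $w_J$. Since the connected components $J_1,\ldots,J_m$ of $J$ are pairwise separated in $[n-1]$ by at least one vertex not in $J$, any simple reflection in $\mathfrak{S}_{J_k}$ commutes with any simple reflection in $\mathfrak{S}_{J_{k'}}$ for $k\ne k'$. Consequently $\ell(w_J)=\sum_{k=1}^{m}\ell(w_0^{(J_k)})$, and concatenating reduced expressions of the individual factors $w_0^{(J_k)}$ produces a reduced expression of $w_J$ whose letters lie in $\{s_j\mid j\in J\}$.

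For the implication $i\in J\Rightarrow s_i\le w_J$: if $i\in J_k$, then every simple generator of $\mathfrak{S}_{J_k}$ occurs in every reduced expression of its longest element $w_0^{(J_k)}$ (a standard property of the longest element of a finite Coxeter group). Hence $s_i$ appears as a letter in the above reduced expression of $w_J$, and the subword property yields $s_i\le w_J$. For the converse, I would use Matsumoto's theorem: any two reduced expressions of $w_J$ are related by braid moves, and in type $A$ neither the commutation move $s_p s_q = s_q s_p$ (for $|p-q|\ge 2$) nor the length-three move $s_p s_{p+1} s_p = s_{p+1} s_p s_{p+1}$ alters the set of letters used. Hence every reduced expression of $w_J$ consists of letters in $\{s_j\mid j\in J\}$; if $s_i\le w_J$, the subword property produces such an expression in which $s_i$ appears, forcing $i\in J$. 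The main (fairly minor) obstacle is this last input --- the invariance of the support of an element under choice of reduced expression --- which one may alternatively phrase as the Coxeter-theoretic fact that $u\le v$ implies $\mathrm{Supp}(u)\subseteq\mathrm{Supp}(v)$, applied to $u=s_i$ and $v=w_J$.
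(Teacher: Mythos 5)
Your proposal is correct and follows essentially the same route as the paper: both build the reduced expression of $w_J$ by concatenating reduced expressions of the factors $w_0^{(J_k)}$ (justified by length additivity), observe that its letters are exactly $\{s_j \mid j\in J\}$, and conclude via the subword characterization of Bruhat order. Your explicit appeal to Matsumoto's theorem (invariance of the support under braid moves) just makes precise a step the paper leaves implicit.
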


\begin{proof}
Recall that $w_J$ is the product of longest elements in $\mathfrak{S}_{J_k}$ for $1\le k\le m$:
\begin{align*}
 w_J = w_0^{(J_1)} w_0^{(J_2)} \cdots w_0^{(J_{m})}.
\end{align*}
Since each $w_0^{(J_k)}$ ($1\le k\le m$) preserves the decomposition \eqref{eq: connected components}, it follows that the length of $w_J$ is the same as the sum of the length of $w_0^{(J_k)}$ for $1\le k\le m$. Thus, the products of reduced expressions of $w_0^{(J_k)}$ for $1\le k\le m$ give a reduced expression of $w_J$. Here, an arbitrary reduced expression of $w_0^{(J_k)}$ contains a simple reflection $s_i$ if and only if $i\in J_k$. Therefore, it follows that a reduced expression of $w_J$ contains $s_i$ if and only if $i\in J$ (see Example~\ref{eg: wJ}). This implies the desired claim. 
\end{proof}

The following claim appears in \cite[Lemma 6]{Insko-Tymoczko}, but we give a proof for the reader.
\begin{lemma}\label{lem: So-san's lemma}
For $J,J'\subseteq[n-1]$, we have
\begin{align}\label{eq: subavr 15}
w_{J'} \le w_{J} \quad \text{if and only if} \quad J'\subseteq J .
\end{align}
\end{lemma}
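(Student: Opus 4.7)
The plan is to prove the two implications separately, invoking Lemma~\ref{lem: So-san's lemma 1} in both directions.

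For the ($\Leftarrow$) direction, assume $J'\subseteq J$. Then each connected component $J'_k$ of $J'$, being a connected subgraph of the Dynkin diagram, is contained in a unique connected component $J_{\sigma(k)}$ of $J$. Consequently $\mathfrak{S}_{J'_k}\subseteq \mathfrak{S}_{J_{\sigma(k)}}$ for each $k$, and hence $\mathfrak{S}_{J'}\subseteq \mathfrak{S}_J$. Since $w_J$ is the longest element of the parabolic subgroup $\mathfrak{S}_J$ and $w_{J'}\in \mathfrak{S}_{J'}\subseteq \mathfrak{S}_J$, the standard fact that the Bruhat order on a parabolic subgroup agrees with the one induced from $\mathfrak{S}_n$ (and has $w_J$ as its maximum) yields $w_{J'}\le w_J$. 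If one prefers to see this concretely via the subword criterion, one fixes a reduced expression of each $w_0^{(J'_k)}$ inside $\mathfrak{S}_{J_{\sigma(k)}}$ as a subword of a reduced expression of $w_0^{(J_{\sigma(k)})}$ (possible since $w_0^{(J_{\sigma(k)})}$ is longest in $\mathfrak{S}_{J_{\sigma(k)}}$), and concatenates these over $k=1,\dots,m'$. Because the factors $w_0^{(J_l)}$ act on pairwise disjoint index sets and thus commute, the resulting word is a reduced expression of $w_{J'}$ appearing as a subword of a reduced expression of $w_J$, as in the proof of Lemma~\ref{lem: So-san's lemma 1}.

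For the ($\Rightarrow$) direction, assume $w_{J'}\le w_J$ and pick an arbitrary $i\in J'$. By Lemma~\ref{lem: So-san's lemma 1} applied to $J'$, we have $s_i\le w_{J'}$, and transitivity of the Bruhat order gives $s_i\le w_J$. Applying Lemma~\ref{lem: So-san's lemma 1} in the other direction (now to $J$) forces $i\in J$. Since $i\in J'$ was arbitrary, $J'\subseteq J$.

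The only nontrivial point is the compatibility of the Bruhat order on the Young (parabolic) subgroup $\mathfrak{S}_J$ with that of the ambient $\mathfrak{S}_n$, which is what lets us conclude that $w_J$ is a maximum for elements of $\mathfrak{S}_J$ in the full Bruhat order. Everything else is a direct application of the previous lemma and transitivity, so I do not expect any essential obstacle beyond making that standard fact explicit.
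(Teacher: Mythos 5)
Your proposal is correct and takes essentially the same route as the paper: the forward implication is the paper's argument verbatim (Lemma~\ref{lem: So-san's lemma 1} plus transitivity of the Bruhat order), and the reverse implication rests on the same key fact the paper invokes, namely the Bruhat-maximality of the longest element of a Young subgroup — you simply apply it once to $\mathfrak{S}_{J'}\subseteq\mathfrak{S}_{J}$ rather than component by component as the paper does. No gap.
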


\begin{proof}
We first prove that $J'\subseteq J$ under the assumption $w_{J'} \le w_{J}$. For this, take an arbitrary element $i\in J'$. By the previous lemma, we have $s_i\le w_{J'}$. Combining this with the assumption $w_{J'} \le w_{J}$, we obtain that $s_i\le w_{J}$. Thus, it follows that $i\in J$ by the previous lemma again. 

We next prove that $w_{J'} \le w_{J}$ under the assumption $J'\subseteq J$. Take the decomposition 
\begin{align*}
 J' = J'_1 \sqcup J'_2 \sqcup \cdots \sqcup J'_{m'}
\end{align*}
into the connected components as in \eqref{eq: connected components}. Since each $J'_{\ell}\ (1\le \ell \le m')$ is connected, it is contained in some connected component $J_k$ of $J$. This leads us to consider a map
\begin{align*}
 \varphi \colon \{1,2,\ldots,m'\} \rightarrow \{1,2,\ldots,m\}
\end{align*}
which we define by the conditions $J'_i\subseteq J_{\varphi(i)}$ for $1\le i\le m'$. Then we have that
\begin{align*}
 \bigsqcup_{\varphi(i)=k} J'_i \subseteq J_{k}
\end{align*}
by the definition of the map $\varphi$. This implies that 
\begin{align*}
 \prod_{\varphi(i)=k} w_0^{(J'_i)} \le w_0^{(J_k)}
 \qquad \text{in $\mathfrak{S}_{J_k}$}
\end{align*}
since $w_0^{(J_k)}$ is the longest permutation in $\mathfrak{S}_{J_k}$. Recalling that each $J_k$ is a connected component of $J$, these inequalities for $1\le k\le m$ imply that $w_{J'}\le w_{J}$.
\end{proof}

\begin{example}
\emph{
Let $n=8$, $J'=\{1,4,5,7\}$ and $J=\{1,2,4,5,6,7\}$ so that we have $J'\subseteq J$. In this case, we have
\begin{align*}
 J'=\{1\}\sqcup\{4,5\}\sqcup\{7\}=J'_1\sqcup J'_2\sqcup J'_3
 \quad \text{and} \quad
 J=\{1,2\}\sqcup\{4,5,6,7\}=J_1\sqcup J_2,
\end{align*}
and hence
\begin{align*}
 w_{J'}= (s_1)(s_4s_5s_4)(s_7) \le (s_1s_2s_1)(s_4s_5s_6s_7s_4s_5s_6s_4s_5s_4)=w_J.
\end{align*}
}
\end{example}

\subsection{Hessenberg varieties}\label{subsect: Hessenberg}
In this subsection, we briefly recall the notion of Hessenberg varieties. They are (possibly reducible) subvarieties of the flag variety $Fl_n$, and will appear in the next section. A function $h\colon [n] \to [n]$ is a \textit{Hessenberg function} if it satisfies the following two conditions:
\begin{enumerate}
\item[(i)] $h(1) \leq h(2) \leq \cdots \leq h(n)$,
\item[(ii)] $h(j) \geq j$ for all $j \in [n]$. 
\end{enumerate}
Note that $h(n)=n$ by definition. We may identify a Hessenberg function $h$ with a configuration of (shaded) boxes on a square grid of size $n \times n$ which consists of boxes in the $i$-th row and the $j$-th column satisfying $i \leq h(j)$ for $i, j\in[n]$, as we illustrate in the following example.

\begin{example}\label{example:HessenbergFunction}
\emph{
Let $n=5$. The Hessenberg function $h\colon[5]\rightarrow [5]$ given by 
\[(h(1),h(2),h(3),h(4),h(5))=(2,3,3,5,5)\]
corresponds to the configuration of the shaded boxes drawn in Figure $\ref{pic:stair-shape}$.}
\begin{figure}[h]
\begin{center}
\begin{picture}(75,75)
\put(0,63){\colorbox{gray}}
\put(0,67){\colorbox{gray}}
\put(0,72){\colorbox{gray}}
\put(4,63){\colorbox{gray}}
\put(4,67){\colorbox{gray}}
\put(4,72){\colorbox{gray}}
\put(9,63){\colorbox{gray}}
\put(9,67){\colorbox{gray}}
\put(9,72){\colorbox{gray}}

\put(15,63){\colorbox{gray}}
\put(15,67){\colorbox{gray}}
\put(15,72){\colorbox{gray}}
\put(19,63){\colorbox{gray}}
\put(19,67){\colorbox{gray}}
\put(19,72){\colorbox{gray}}
\put(24,63){\colorbox{gray}}
\put(24,67){\colorbox{gray}}
\put(24,72){\colorbox{gray}}

\put(30,63){\colorbox{gray}}
\put(30,67){\colorbox{gray}}
\put(30,72){\colorbox{gray}}
\put(34,63){\colorbox{gray}}
\put(34,67){\colorbox{gray}}
\put(34,72){\colorbox{gray}}
\put(39,63){\colorbox{gray}}
\put(39,67){\colorbox{gray}}
\put(39,72){\colorbox{gray}}

\put(45,63){\colorbox{gray}}
\put(45,67){\colorbox{gray}}
\put(45,72){\colorbox{gray}}
\put(49,63){\colorbox{gray}}
\put(49,67){\colorbox{gray}}
\put(49,72){\colorbox{gray}}
\put(54,63){\colorbox{gray}}
\put(54,67){\colorbox{gray}}
\put(54,72){\colorbox{gray}}

\put(60,63){\colorbox{gray}}
\put(60,67){\colorbox{gray}}
\put(60,72){\colorbox{gray}}
\put(64,63){\colorbox{gray}}
\put(64,67){\colorbox{gray}}
\put(64,72){\colorbox{gray}}
\put(69,63){\colorbox{gray}}
\put(69,67){\colorbox{gray}}
\put(69,72){\colorbox{gray}}

\put(0,48){\colorbox{gray}}
\put(0,52){\colorbox{gray}}
\put(0,57){\colorbox{gray}}
\put(4,48){\colorbox{gray}}
\put(4,52){\colorbox{gray}}
\put(4,57){\colorbox{gray}}
\put(9,48){\colorbox{gray}}
\put(9,52){\colorbox{gray}}
\put(9,57){\colorbox{gray}}

\put(15,48){\colorbox{gray}}
\put(15,52){\colorbox{gray}}
\put(15,57){\colorbox{gray}}
\put(19,48){\colorbox{gray}}
\put(19,52){\colorbox{gray}}
\put(19,57){\colorbox{gray}}
\put(24,48){\colorbox{gray}}
\put(24,52){\colorbox{gray}}
\put(24,57){\colorbox{gray}}

\put(30,48){\colorbox{gray}}
\put(30,52){\colorbox{gray}}
\put(30,57){\colorbox{gray}}
\put(34,48){\colorbox{gray}}
\put(34,52){\colorbox{gray}}
\put(34,57){\colorbox{gray}}
\put(39,48){\colorbox{gray}}
\put(39,52){\colorbox{gray}}
\put(39,57){\colorbox{gray}}

\put(45,48){\colorbox{gray}}
\put(45,52){\colorbox{gray}}
\put(45,57){\colorbox{gray}}
\put(49,48){\colorbox{gray}}
\put(49,52){\colorbox{gray}}
\put(49,57){\colorbox{gray}}
\put(54,48){\colorbox{gray}}
\put(54,52){\colorbox{gray}}
\put(54,57){\colorbox{gray}}

\put(60,48){\colorbox{gray}}
\put(60,52){\colorbox{gray}}
\put(60,57){\colorbox{gray}}
\put(64,48){\colorbox{gray}}
\put(64,52){\colorbox{gray}}
\put(64,57){\colorbox{gray}}
\put(69,48){\colorbox{gray}}
\put(69,52){\colorbox{gray}}
\put(69,57){\colorbox{gray}}

\put(15,33){\colorbox{gray}}
\put(15,37){\colorbox{gray}}
\put(15,42){\colorbox{gray}}
\put(19,33){\colorbox{gray}}
\put(19,37){\colorbox{gray}}
\put(19,42){\colorbox{gray}}
\put(24,33){\colorbox{gray}}
\put(24,37){\colorbox{gray}}
\put(24,42){\colorbox{gray}}

\put(30,33){\colorbox{gray}}
\put(30,37){\colorbox{gray}}
\put(30,42){\colorbox{gray}}
\put(34,33){\colorbox{gray}}
\put(34,37){\colorbox{gray}}
\put(34,42){\colorbox{gray}}
\put(39,33){\colorbox{gray}}
\put(39,37){\colorbox{gray}}
\put(39,42){\colorbox{gray}}

\put(45,33){\colorbox{gray}}
\put(45,37){\colorbox{gray}}
\put(45,42){\colorbox{gray}}
\put(49,33){\colorbox{gray}}
\put(49,37){\colorbox{gray}}
\put(49,42){\colorbox{gray}}
\put(54,33){\colorbox{gray}}
\put(54,37){\colorbox{gray}}
\put(54,42){\colorbox{gray}}

\put(60,33){\colorbox{gray}}
\put(60,37){\colorbox{gray}}
\put(60,42){\colorbox{gray}}
\put(64,33){\colorbox{gray}}
\put(64,37){\colorbox{gray}}
\put(64,42){\colorbox{gray}}
\put(69,33){\colorbox{gray}}
\put(69,37){\colorbox{gray}}
\put(69,42){\colorbox{gray}}

\put(45,18){\colorbox{gray}}
\put(45,22){\colorbox{gray}}
\put(45,27){\colorbox{gray}}
\put(49,18){\colorbox{gray}}
\put(49,22){\colorbox{gray}}
\put(49,27){\colorbox{gray}}
\put(54,18){\colorbox{gray}}
\put(54,22){\colorbox{gray}}
\put(54,27){\colorbox{gray}}

\put(60,18){\colorbox{gray}}
\put(60,22){\colorbox{gray}}
\put(60,27){\colorbox{gray}}
\put(64,18){\colorbox{gray}}
\put(64,22){\colorbox{gray}}
\put(64,27){\colorbox{gray}}
\put(69,18){\colorbox{gray}}
\put(69,22){\colorbox{gray}}
\put(69,27){\colorbox{gray}}

\put(45,3){\colorbox{gray}}
\put(45,7){\colorbox{gray}}
\put(45,12){\colorbox{gray}}
\put(49,3){\colorbox{gray}}
\put(49,7){\colorbox{gray}}
\put(49,12){\colorbox{gray}}
\put(54,3){\colorbox{gray}}
\put(54,7){\colorbox{gray}}
\put(54,12){\colorbox{gray}}

\put(60,3){\colorbox{gray}}
\put(60,7){\colorbox{gray}}
\put(60,12){\colorbox{gray}}
\put(64,3){\colorbox{gray}}
\put(64,7){\colorbox{gray}}
\put(64,12){\colorbox{gray}}
\put(69,3){\colorbox{gray}}
\put(69,7){\colorbox{gray}}
\put(69,12){\colorbox{gray}}

\put(0,0){\framebox(15,15)}
\put(15,0){\framebox(15,15)}
\put(30,0){\framebox(15,15)}
\put(45,0){\framebox(15,15)}
\put(60,0){\framebox(15,15)}
\put(0,15){\framebox(15,15)}
\put(15,15){\framebox(15,15)}
\put(30,15){\framebox(15,15)}
\put(45,15){\framebox(15,15)}
\put(60,15){\framebox(15,15)}
\put(0,30){\framebox(15,15)}
\put(15,30){\framebox(15,15)}
\put(30,30){\framebox(15,15)}
\put(45,30){\framebox(15,15)}
\put(60,30){\framebox(15,15)}
\put(0,45){\framebox(15,15)}
\put(15,45){\framebox(15,15)}
\put(30,45){\framebox(15,15)}
\put(45,45){\framebox(15,15)}
\put(60,45){\framebox(15,15)}
\put(0,60){\framebox(15,15)}
\put(15,60){\framebox(15,15)}
\put(30,60){\framebox(15,15)}
\put(45,60){\framebox(15,15)}
\put(60,60){\framebox(15,15)}
\end{picture}
\end{center}
\caption{The configuration of the shaded boxes corresponding to $h$.}
\label{pic:stair-shape}
\end{figure}
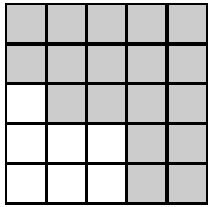
\end{example}

For an $n\times n$ matrix $X$ considered as a linear map $X\colon\C^n \to \C^n$ and a Hessenberg function $h\colon [n] \to [n]$, the \textit{Hessenberg variety} associated with $X$ and $h$ (\cite{De Mari-Procesi-Shayman,ty}) is defined as
\begin{equation} \label{eq:DefinitionHessenbergVariety}
\Hess(X,h)=\{V_\bullet \in Fl_n \mid XV_j \subseteq V_{h(j)} \ \mbox{for all} \ j\in[n] \}.
\end{equation}
Let $N$ be the $n\times n$ regular nilpotent matrix in Jordan canonical form, i.e., 
\begin{align}\label{eq: def of N}
N =
\begin{pmatrix} 
0 & 1 & & &\\ 
& 0 & 1 & &\\
&   & \ddots & \ddots &\\
&  & & 0 & 1\\
&  & & & 0
\end{pmatrix}.
\end{align}
Then $\Hess(N,h)$ is called a \textit{regular nilpotent Hessenberg variety}. It is known that $\Hess(N,h)$ is an irreducible projective variety of (complex) dimension $\sum_{i=1}^n (h(i)-i)$ (\cite[Lemma 7.1]{an-ty} and \cite{ty}). When $h(i)=i+1$ for all $1\le i\le n-1$, it is clear that $\Hess(N,h)=\Pet{n}$ by definition. 
In particular, we obtain the well-known formula 
\[\dim_{\C}\Pet{n}=n-1.\]
For Hessenberg functions $h,h'\colon[n]\rightarrow [n]$, it is clear that if $h(i)\le h'(i)$ for $1\le i\le n$ then $\Hess(N,h)\subseteq \Hess(N,h')$. For example, the Hessenberg function $h\colon[5]\rightarrow[5]$ given in Example~\ref{example:HessenbergFunction} defines a $3$-dimensional regular nilpotent Hessenberg variety $\Hess(N,h)$ which is contained in $\Pet{5}(\subseteq Fl_5)$.

\bigskip

\section{Geometric constructions}\label{sec: geometric constructions}
In this section, we introduce two kinds of subvarieties $X_J$ and $\Omega_J$ in $\Pet{n}$ for each $J\subseteq[n-1]$, and we establish geometric properties of them. They will play important roles to construct an additive basis of the integral cohomology ring $H^*(\Pet{n};\Z)$ in the next section.

\subsection{Analogue of Schubert varieties in the Peterson variety}
For $w\in \mathfrak{S}_n$, 
let $X_{w}\subseteq Fl_n$ be the Schubert variety associated with $w$ and $\Omega_w\subseteq Fl_n$ the dual Schubert variety associated with $w$ (\cite[Sect.~10]{fult97}).
We note that $\dim_{\C} X_{w} =\codim_{\C} (\Omega_w,Fl_n)= \ell(w)$, where $\ell(w)$ is the length of $w$.
\begin{definition}
For $J\subseteq [n-1]$, we define
\begin{equation}
\begin{split}\label{eq: subavr 10}
X_{J} 
\coloneqq X_{w_J}\cap \Pet{n} \quad \text{and} \quad \Omega_{J} 
\coloneqq \Omega_{w_J}\cap \Pet{n},
\end{split}
\end{equation}
where $w_J\in\mathfrak{S}_n$ is the permutation defined in $\eqref{eq: def of wJ}$.
\end{definition}

Peterson (\cite{Peterson}) studied a particular open affine subset of $\Omega_J$ to construct the quantum cohomology ring of $Fl_n$ (cf.\ \cite{Kostant,Rietsch06}). Also, Insko (\cite{Insko}) and Insko-Tymoczko (\cite{Insko-Tymoczko}) studied $X_J$ to show the injectivity of the homomorphism $H_*(\Pet{n};\Z)\rightarrow H_*(Fl_n;\Z)$. It turns out that $X_J$ and $\Omega_J$ in $\Pet{n}$ play similar roles to that of Schubert varieties and dual Schubert varieties in $Fl_n$. As an illustrating property, we begin with the following claim. Recall that we have $X_{w}\cap \Omega_{v}\ne\emptyset$ in $Fl_n$ if and only if $w\ge v$. 

\begin{proposition}\label{prop: point intersection 2}
For $J,J'\subseteq[n-1]$, we have
\begin{align*}
X_{J}\cap \Omega_{J'} \neq \emptyset
\quad \text{if and only if} \quad
J \supseteq J'.
\end{align*}
Moreover, when $J=J'$, we have $X_J \cap \Omega_J = \{w_{J}\}$.
\end{proposition}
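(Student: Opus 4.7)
The plan is to translate each claim about intersections inside $\Pet{n}$ into a statement about intersections inside $Fl_n$, where we can invoke the classical criterion $X_w \cap \Omega_v \neq \emptyset \iff w \geq v$, and then convert the resulting Bruhat inequality $w_J \geq w_{J'}$ into the set-theoretic containment $J \supseteq J'$ via Lemma~\ref{lem: So-san's lemma}.

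First I would handle the forward implication. Suppose $X_J \cap \Omega_{J'} \neq \emptyset$. Since $X_J \subseteq X_{w_J}$ and $\Omega_{J'} \subseteq \Omega_{w_{J'}}$ by definition, the intersection $X_{w_J} \cap \Omega_{w_{J'}}$ in $Fl_n$ is also non-empty. The classical fact then forces $w_J \geq w_{J'}$ in the Bruhat order, and Lemma~\ref{lem: So-san's lemma} gives $J' \subseteq J$, as desired.

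For the reverse implication, given $J \supseteq J'$, I claim the permutation flag associated with $w_J$ already sits inside $X_J \cap \Omega_{J'}$. It lies in $\Pet{n}$ because $w_J$ is a $\C^{\times}$-fixed point of $\Pet{n}$ by \eqref{eq: fixed points of Pet}. It lies in $X_{w_J}$ since every $v \leq w_J$ (in particular $w_J$ itself) is a $T$-fixed point of $X_{w_J}$. Finally, $J' \subseteq J$ yields $w_{J'} \leq w_J$ by Lemma~\ref{lem: So-san's lemma}, so $w_J \in \Omega_{w_{J'}}$ by the analogous description of $T$-fixed points of dual Schubert varieties. Hence $w_J \in X_J \cap \Omega_{J'}$, which is therefore non-empty.

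The moreover part is a direct consequence: by the classical equality $X_{w_J} \cap \Omega_{w_J} = \{w_J\}$ in $Fl_n$, intersecting with $\Pet{n}$ yields $X_J \cap \Omega_J = \{w_J\} \cap \Pet{n} = \{w_J\}$, the last equality again using \eqref{eq: fixed points of Pet}. I do not anticipate a serious obstacle here: the whole argument is essentially the observation that restriction to $\Pet{n}$ does not lose any information about the relevant non-emptiness or uniqueness statements, since $w_J$ itself is the natural witness in both the intersection and in $\Pet{n}$, and Lemma~\ref{lem: So-san's lemma} is precisely the compatibility needed between the combinatorics of subsets $J$ and the Bruhat order on the $w_J$'s.
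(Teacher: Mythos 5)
Your proof is correct, but your argument for the ``only if'' direction takes a genuinely different route from the paper's. You pass immediately to $Fl_n$: non-emptiness of $X_J\cap\Omega_{J'}$ forces non-emptiness of $X_{w_J}\cap\Omega_{w_{J'}}$, and the classical Richardson criterion $X_w\cap\Omega_v\neq\emptyset\iff w\ge v$ (which the paper itself recalls just before the statement) gives $w_J\ge w_{J'}$, hence $J'\subseteq J$ by Lemma~\ref{lem: So-san's lemma}. The paper instead works inside $\Pet{n}$: it notes that $(X_{w_J}\cap\Omega_{w_{J'}})\cap\Pet{n}$ is complete and stable under the $\C^{\times}$-action, hence contains a $\C^{\times}$-fixed point, which by \eqref{eq: fixed points of Pet} is some $w_K$ with $w_J\ge w_K\ge w_{J'}$. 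Your version is shorter and leans only on the already-quoted classical fact; the paper's version is an instance of the complete-variety-plus-torus-action fixed-point technique it reuses elsewhere (compare Lemma~\ref{lem: intersection with dual cell}), and it yields the extra (unneeded here) information that the intersection contains a point $w_K$ with $J'\subseteq K\subseteq J$. The ``if'' direction is identical in both arguments, with $w_J$ as the witness. Finally, you spell out the ``moreover'' claim via $X_{w_J}\cap\Omega_{w_J}=\{w_J\}$ in $Fl_n$ intersected with $\Pet{n}$; the paper's written proof leaves that part implicit, and your justification is the natural one.
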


\begin{proof}
If $X_{J}\cap \Omega_{J'} \neq \emptyset$, then we have $(X_{w_J}\cap\Omega_{w_{J'}})\cap \Pet{n}\ne\emptyset$ by definition. Note that $(X_{w_J}\cap\Omega_{w_{J'}})\cap \Pet{n}$ is complete, and it is preserved by the $\C^{\times}$-action on $\Pet{n}$ described in Section~\ref{sec: Basic notations}. Thus, it follows that it contains a $\C^{\times}$-fixed point (e.g.\ \cite[Chap.\ VIII, Sect.\ 21.2]{Humphreys}). Since we have
\begin{align*}
((X_{w_J}\cap\Omega_{w_{J'}})\cap \Pet{n})^{\C^{\times}} 
= (X_{w_J}\cap\Omega_{w_{J'}})^{\C^{\times}} \cap (\Pet{n})^{\C^{\times}},
\end{align*}
we see that there exists a $\C^{\times}$-fixed point $w_K\in \Pet{n}$ (see \eqref{eq: fixed points of Pet}) such that $w_K\in X_{w_J}$ and $w_K\in\Omega_{w_{J'}}$. The former condition implies that $w_J\ge w_K$, and the latter condition implies that $w_K \ge w_{J'}$ (\cite[Sect.\ 10.2 and 10.5]{fult97}). Thus, we obtain $w_J\ge w_{J'}$, and it follows that $J\supseteq J'$ from Lemma~\ref{lem: So-san's lemma}.

If $J \supseteq J'$, then we have $w_J\ge w_{J'}$ by Lemma~\ref{lem: So-san's lemma}. This implies that $w_{J}\in X_{w_J}\cap\Omega_{w_{J'}}$, and hence $X_{J}\cap \Omega_{J'}\ne\emptyset$ follows.
\end{proof}

A distinguished property of $X_J$ is that it is a regular nilpotent Hessenberg variety for a certain Hessenberg function. Let us explain this in the following. For each $J\subseteq[n-1]$, there is a natural Hessenberg function which is determined by $J$ as follows. Let $h_J\colon[n]\rightarrow[n]$ be a function given by
\begin{align}\label{eq: def of hJ}
 h_J (i) =
 \begin{cases}
  i+1 \quad &\text{if $i\in J$}\\
  i &\text{if $i\notin J$}
 \end{cases}
\end{align}
for $1\le i\le n$. Then $h_J$ is a Hessenberg function, and we have $\Hess(N,h_J)\subseteq \Pet{n}$ since the Hessenberg function for $\Pet{n}$ is given by $h(i)=i+1$ for $1\le i\le n-1$ as we saw in Section~\ref{subsect: Hessenberg}.

\begin{example}\label{ex: hJ}
{\rm 
Let $n=10$ and $J= \{1,2\} \sqcup\{4,5,6\} \sqcup\{9\} = J_1\sqcup J_2\sqcup J_3$. Then the configuration of boxes of $h_J$ is given in Figure $\ref{pic: example of hJ}$. Compare the figure with the permutation matrix of $w_J$ in Example~$\ref{eg: wJ}$.}
\begin{figure}[htbp]
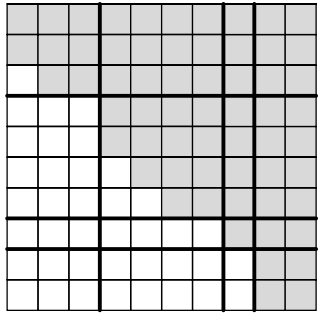

\[
{\unitlength 0.1in%
}%
\]
\caption{The Hessenberg function $h_J$.}
\label{pic: example of hJ}
\end{figure}
\end{example}

As we mentioned above, Insko-Tymoczko (\cite{Insko-Tymoczko}) studied $X_J$, and they proved the most part of the following claim. Recall that $X_J$ is defined to be the intersection $X_{w_J}\cap\Pet{n}$ where $X_{w_J}$ is the Schubert variety associated with $w_J$. 

\begin{proposition}\label{prop: XJ and Hess}
For $J\subseteq [n-1]$, we have
\begin{align}\label{eq: two equalities}
 X_J = \overline{X^{\circ}_{w_J}\cap\Pet{n}} = \Hess(N,h_J)
\end{align}
where $N$ is the regular nilpotent matrix given in \eqref{eq: def of N}, and $X^{\circ}_{w_J}$ is the Schubert cell associated with $w_J$. In particular, we have $\dim_{\C} X_J = |J|$.
\end{proposition}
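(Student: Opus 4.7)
My strategy is to rely on the affine paving of $\Pet{n}$: the Peterson variety decomposes as $\Pet{n} = \bigsqcup_{K \subseteq [n-1]} C_K$ where each cell $C_K$ is an affine space of complex dimension $|K|$ having $w_K$ as its unique $\C^{\times}$-fixed point, and moreover $C_K = X^{\circ}_{w_K} \cap \Pet{n}$. The plan is to identify both $X_J = X_{w_J} \cap \Pet{n}$ and $\Hess(N,h_J)$ with $\bigsqcup_{K \subseteq J} C_K$, and then use these as sandwich layers for the middle equality.

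For the right equality, I would argue as follows. Both $X_J$ and $\Hess(N,h_J)$ are closed and $\C^{\times}$-stable inside $\Pet{n}$, hence each is a union of cells $C_K$. A cell $C_K$ lies in $X_J$ iff the fixed point $w_K$ lies in $X_{w_J}$, i.e.\ $w_K \le w_J$, i.e.\ $K \subseteq J$ by Lemma~\ref{lem: So-san's lemma}; the converse implication $K \subseteq J \Rightarrow C_K \subseteq X_J$ comes from $C_K \subseteq X^{\circ}_{w_K} \subseteq X_{w_K} \subseteq X_{w_J}$. Similarly, $w_K \in \Hess(N,h_J)$ iff $h_K \le h_J$ pointwise iff $K \subseteq J$; the converse $C_K \subseteq \Hess(N,h_J)$ whenever $K \subseteq J$ will follow once we know $C_K \subseteq \Hess(N,h_K)$, which is precisely the geometric input supplied by Insko--Tymoczko \cite{Insko-Tymoczko}. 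This identifies both sides with $\bigsqcup_{K \subseteq J} C_K$, giving $X_J = \Hess(N,h_J)$.

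For the middle equality and the dimension, I note that $X^{\circ}_{w_J} \cap \Pet{n}$ equals the top cell $C_J$, an irreducible affine piece of complex dimension $|J|$ inside $\Hess(N,h_J)$. The dimension formula recalled in Section~\ref{subsect: Hessenberg} gives $\dim_{\C} \Hess(N,h_J) = \sum_{i=1}^n (h_J(i) - i) = |J|$, and irreducibility of $\Hess(N,h_J)$ then forces $C_J$ to be Zariski-open and dense in it, so $\overline{C_J} = \Hess(N,h_J) = X_J$ and $\dim_{\C} X_J = |J|$. The main obstacle in executing the plan is the cell-containment $C_K \subseteq \Hess(N,h_K)$ (rather than just the fixed-point containment $w_K \in \Hess(N,h_K)$); this is the geometric heart of the argument and is exactly the content of the Insko--Tymoczko work that the excerpt credits.
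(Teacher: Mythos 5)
Your overall architecture is the same as the paper's: decompose $X_J$ into the Peterson cells $C_K=X^{\circ}_{w_K}\cap\Pet{n}$ with $K\subseteq J$, push everything into $\Hess(N,h_J)$, and close the sandwich $\overline{C_J}\subseteq X_J\subseteq\Hess(N,h_J)$ using $\dim_{\C}\Hess(N,h_J)=|J|$ and irreducibility. But there is a genuine gap at what you yourself call the geometric heart: the containment $C_K\subseteq\Hess(N,h_K)$ is never proved, only attributed to Insko--Tymoczko. That attribution does not match how the paper uses \cite{Insko-Tymoczko}: their Lemma~9 is invoked only for the dimension count $\dim_{\C}\overline{X^{\circ}_{w_J}\cap\Pet{n}}=|J|$, while the inclusion \eqref{eq: first inclusion} is proved in the paper by a short direct argument that you should supply. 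Namely, if $V_{\bullet}\in X^{\circ}_{w_K}$, then because $w_K$ is the product of the longest elements of the blocks of $K$ (see \eqref{eq: def of wJ}), the Schubert-cell conditions force $V_p=\langle e_1,\dots,e_p\rangle$ for every $p\notin K$; since $Ne_1=0$ and $Ne_i=e_{i-1}$, this gives $NV_p\subseteq V_p$, i.e.\ $V_{\bullet}\in\Hess(N,h_K)$. Without this (or an equally explicit substitute), the inclusion $X_J\subseteq\Hess(N,h_J)$ --- and hence the whole sandwich --- is unsupported.

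A secondary point: your opening inference ``closed and $\C^{\times}$-stable, hence a union of cells $C_K$'' is false in general; a closed invariant set may meet a cell without containing it (e.g.\ the two fixed points in $\Pet{2}=\P^1$ form a closed invariant set that is not a union of Bruhat cells). For $X_J$ the conclusion is correct, but the correct reason is the decomposition $X_{w_J}=\bigsqcup_{v\le w_J}X^{\circ}_v$ combined with Lemmas~\ref{lem: So-san's lemma} and \ref{lem: intersection with dual cell}, exactly as in \eqref{eq: decomp of XJ}; for $\Hess(N,h_J)$ the ``union of cells'' statement is essentially equivalent to the proposition itself, so it cannot be taken as an input. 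Fortunately your final argument does not need it: the only direction the sandwich uses is $C_K\subseteq\Hess(N,h_K)\subseteq\Hess(N,h_J)$ for $K\subseteq J$, which again reduces to the missing inclusion above. If you did want the converse direction, the repair is the limit argument in the proof of Lemma~\ref{lem: intersection with dual cell}: every point of a cell flows to its fixed point as $t\to 0$, so a closed invariant subvariety meeting $C_K$ must contain $w_K$.
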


To prove this, we need the following lemma. Let $X^{\circ}_{w}\subseteq Fl_n$ be the Schubert cell associated with $w$ and and $\Omega^{\circ}_w\subseteq Fl_n$ the dual Schubert cell associated with $w$.

\begin{lemma}\label{lem: intersection with dual cell}
The following are equivalent.
\begin{itemize}
 \item[(1)] $X^{\circ}_w\cap \Pet{n}\ne\emptyset$
 \item[(2)] $\Omega^{\circ}_w\cap \Pet{n}\ne\emptyset$
 \item[(3)] $w\in \Pet{n}$ $($i.e. $w=w_J$ for some $J\subseteq[n-1])$
\end{itemize}
\end{lemma}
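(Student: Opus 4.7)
The plan is to use the $\C^{\times}$-action via a Bia\l{}ynicki--Birula-type attractor argument. Since the Schubert cell $X^{\circ}_w=BwB/B$ is a $B$-orbit and the dual Schubert cell $\Omega^{\circ}_w = B_-wB/B$ is a $B_-$-orbit, and $\C^{\times}\subseteq T\subseteq B\cap B_-$, both cells are $\C^{\times}$-stable. Combined with the $\C^{\times}$-invariance of $\Pet{n}$, the intersections $X^{\circ}_w \cap \Pet{n}$ and $\Omega^{\circ}_w \cap \Pet{n}$ are themselves $\C^{\times}$-invariant.

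The implications $(3)\Rightarrow(1)$ and $(3)\Rightarrow(2)$ are immediate: the permutation flag $w$ belongs to both $X^{\circ}_w$ and $\Omega^{\circ}_w$, so whenever $w\in \Pet{n}$ the flag $w$ itself witnesses the non-emptiness of either intersection.

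For $(1)\Rightarrow(3)$, pick $V_{\bullet}\in X^{\circ}_w\cap \Pet{n}$ and write $V_{\bullet}=u\cdot (wB)$ for some upper unipotent $u\in U$. Acting by $g\in\C^{\times}$ through \eqref{eq: C-starsubgroup} gives $g\cdot V_{\bullet}=(gug^{-1})\cdot (wB)$, and the $(i,j)$-entry of $gug^{-1}$ equals $g^{i-j}u_{ij}$; since $u$ is strictly upper with $i<j$, these rescaling factors tend to $0$ as $g\to\infty$. Hence $\lim_{g\to\infty} g\cdot V_{\bullet}=wB$ in $Fl_n$. Because $\Pet{n}$ is a closed subvariety of the complete variety $Fl_n$, this limit lies in $\Pet{n}$, so $w\in(\Pet{n})^{\C^{\times}}$ and $(3)$ follows from \eqref{eq: fixed points of Pet}.

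The implication $(2)\Rightarrow(3)$ is completely analogous: we parametrize $V_{\bullet}\in\Omega^{\circ}_w$ as $u_-\cdot (wB)$ with $u_-\in U_-$ lower unipotent and take the limit as $g\to 0$, so that the entries $g^{i-j}(u_-)_{ij}$ with $i>j$ vanish, yielding $\lim_{g\to 0} g\cdot V_{\bullet}=wB\in\Pet{n}$. I do not foresee a serious obstacle; the real content is just the standard fact that $w$ is the attractor of the $\C^{\times}$-flow on $X^{\circ}_w$ as $g\to\infty$ (and on $\Omega^{\circ}_w$ as $g\to 0$), which is made transparent by the matrix computations above and relies only on closedness of $\Pet{n}$ in $Fl_n$ together with the identification of fixed points in \eqref{eq: fixed points of Pet}.
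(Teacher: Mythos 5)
Your proof is correct and takes essentially the same approach as the paper: both arguments use the $\C^{\times}$-action of \eqref{eq: C-starsubgroup} to flow a point of the intersection to the fixed point $wB$ and then invoke closedness to conclude $w\in\Pet{n}$, identifying $w$ with some $w_J$ via \eqref{eq: fixed points of Pet}. The only cosmetic differences are that you compute the limit explicitly by conjugating the unipotent part and take it in $Fl_n$ using closedness of $\Pet{n}$, whereas the paper takes the limit inside the affine cell $X^{\circ}_w\cong\C^{\ell(w)}$ using the sign of the torus weights and closedness of $X^{\circ}_w\cap\Pet{n}$ in the cell.
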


\begin{proof}
It is clear that (3) implies (1). To see that (1) implies (3), take an  element $z\in X^{\circ}_w\cap \Pet{n}\ne\emptyset$. Since $X^{\circ}_w\cap \Pet{n}\subseteq X^{\circ}_w$ is preserved under the $\C^{\times}$-action on $X^{\circ}_w$, it follows that $t\cdot z\in X^{\circ}_w\cap \Pet{n}$ for all $t\in \C^{\times}$. Noticing that $X^{\circ}_w\cap \Pet{n}\subseteq X^{\circ}_w$ is a closed subset, we have 
\begin{align}\label{eq: limit}
\lim_{t\rightarrow0}t\cdot z\in X^{\circ}_w\cap \Pet{n}.
\end{align}
Under the standard identification $X^{\circ}_w = \C^{\ell(w)}$ (cf.\ \cite[Sect.\ 10.2]{fult97}), the $\C^{\times}$-action on $X^{\circ}_w$ is identified with a linear action with positive weights. Thus it follows that $\lim_{t\rightarrow0}t\cdot z=0$ which corresponds to $w\in X^{\circ}_w$ (cf.\ the proof of Lemma~5 in \cite{Insko-Tymoczko}). Thus it follows that $w\in \Pet{n}$ by \eqref{eq: limit}. The equivalence of (2) and (3) follows by an argument similar to that for the equivalence of (1) and (3).
\end{proof}

\begin{proof}[Proof of Proposition~$\ref{prop: XJ and Hess}$]
Let us first prove that 
\begin{align}\label{eq: first inclusion}
 X^{\circ}_{w_J}\cap\Pet{n} \subseteq \Hess(N,h_J)
 \qquad \text{for each $J\subseteq [n-1]$}.
\end{align}
For this, take an arbitrary element $V_{\bullet}\in X^{\circ}_{w_J}\cap\Pet{n}$. Then we have
\begin{align*}
 NV_i \subseteq V_{i+1} \qquad (1\le i\le n-1).
\end{align*}
To see that $V_{\bullet}\in \Hess(N,h_J)$, we need to show that 
\begin{align}\label{eq: NVp}
 NV_p \subseteq V_{p} \qquad \text{for }p\notin J.
\end{align}
Since we are assuming $V_{\bullet}\in X^{\circ}_{w_J}\cap\Pet{n}$, the flag $V_{\bullet}$ lies in the Schubert cell $X^{\circ}_{w_J}$. Here, the permutation $w_J$ is a product of longest permutations of the symmetric group of smaller ranks as given in \eqref{eq: def of wJ}. Thus it follows (from e.g.\ \cite[Sect.\ 10.2]{fult97}) that 
\begin{align*}
 V_p 
 = 
 \langle e_{1}, e_{2}, \ldots, e_{p}\rangle
 \qquad \text{for }p\notin J ,
\end{align*}
where $e_1,e_2,\ldots,e_n$ are the standard basis of $\C^n$. Since $Ne_1 =0$ and $Ne_i =e_{i-1}$ for $2\le i\le n$, it is clear that \eqref{eq: NVp} follows. Thus we obtain \eqref{eq: first inclusion}.

Now, let us prove the claim \eqref{eq: two equalities} of this proposition. Since we have $X_{w_J}=\bigsqcup_{v\le w_J}  X^{\circ}_{v}$, it follows that
\begin{align}\label{eq: decomp of XJ}
 X_{J}
 = X_{w_J}\cap \Pet{n}
 = \bigsqcup_{v\le w_J} ( X^{\circ}_{v}\cap \Pet{n} )
 = \bigsqcup_{J'\subseteq J} ( X^{\circ}_{w_{J'}}\cap \Pet{n} ), 
\end{align}
where the last equality follows from Lemmas \ref{lem: So-san's lemma} and \ref{lem: intersection with dual cell}. For each intersection $X^{\circ}_{w_{J'}}\cap \Pet{n}$ in the right-most side, we have that $X^{\circ}_{w_{J'}}\cap \Pet{n}\subseteq \Hess(N,h_{J'})$ by \eqref{eq: first inclusion}. 
The condition $J'\subseteq J$ implies that $h_{J'}(i)\le h_J(i)$ for $1\le i\le n$, and hence we have $\Hess(N,h_{J'})\subseteq \Hess(N,h_{J})$. Combining this with the previous inclusion, we see that
\begin{align*}
 X^{\circ}_{w_{J'}}\cap \Pet{n}\subseteq \Hess(N,h_J) 
\end{align*}
in \eqref{eq: decomp of XJ}. Thus it follows that
\begin{align}\label{eq: XJ in HesshJ}
 X_{J} \subseteq \Hess(N,h_J).
\end{align}
Note that $X^{\circ}_{w_J}\cap \Pet{n} \subseteq X_{J}(=X_{w_J}\cap \Pet{n})$ by definition, and hence we have that $\overline{X^{\circ}_{w_J}\cap \Pet{n}} \subseteq X_{J}$ by taking the closure. Combining this with \eqref{eq: XJ in HesshJ}, we obtain that
\begin{align}\label{eq: two inclusions}
 \overline{X^{\circ}_{w_J}\cap \Pet{n}} \subseteq X_{J} \subseteq \Hess(N,h_J).
\end{align}
In this sequence, the both sides have the same dimension.  This is because we have $\dim_{\C} \overline{X^{\circ}_{w_J}\cap\Pet{n}} = |J|$ from \cite[Lemma~9]{Insko-Tymoczko} and $\dim_{\C} \Hess(N,h_J)=|J|$ from \cite[Lemma~7.1]{an-ty}. Since $\Hess(N,h_J)$ is irreducible, the two inclusions in \eqref{eq: two inclusions} are equalities. This completes the proof.
\end{proof}

\vspace{10pt}

Combining Proposition~\ref{prop: XJ and Hess} and a result of Drellich \cite[Theorem 4.5]{Dre1}, we may express $X_J$ as a product of Peterson varieties of smaller ranks as follows. 
Let $J = J_1 \sqcup J_2 \sqcup \cdots \sqcup J_{m}$ be the decomposition into the connected components. Then, by definition, we have $w_J=w_0^{(J_1)} w_0^{(J_2)} \cdots w_0^{(J_{m})}$, where each $w_0^{(J_k)}$ is a product of longest elements in $\mathfrak{S}_{J_k}$ for $1\le k\le m$. Hence it follows that the Schubert variety $X_{w_J}\subseteq Fl_n$ associated with $w_J$ is isomorphic to the product of the flag varieties of smaller ranks:
\begin{align*}
 X_{w_J} = \prod_{k=1}^{m} X_{w_0^{(J_k)}} \cong \prod_{k=1}^{m} Fl_{n_k}, 
\end{align*}
where we set
\begin{align*}
 n_k\coloneqq |J_k|+1 \qquad \text{for $1\le k\le m$}.
\end{align*}
By restricting this isomorphism to $X_J = X_{w_J}\cap \Pet{n}$, it follows from Proposition~\ref{prop: XJ and Hess} and \cite[Theorem 4.5]{Dre1} that $X_{J}$ is isomorphic to a product of Peterson varieties of smaller ranks. 
\begin{corollary}\label{lem: decomp of PetJ}
For $J\subseteq[n-1]$, we have
\begin{align*}
 X_{J} = \prod_{k=1}^{m} X_{J_k} \cong
 \prod_{k=1}^{m} \Pet{n_k},
\end{align*}
where $J = J_1 \sqcup J_2 \sqcup \cdots \sqcup J_{m}$ is the decomposition into the connected components and $n_k= |J_k|+1$ $(1\le k\le m)$.
\end{corollary}

\begin{example}\label{ex: product decomp}
{\rm 
Let $n=10$ and $J =\{1,2\} \sqcup\{4,5,6\} \sqcup\{9\} = J_1\sqcup J_2\sqcup J_3$. The representation matrix of $w_J$ is given in Example~$\ref{eg: wJ}$, and we have 
\begin{align*}
&X_{w_J} \cong Fl_3\times Fl_4\times Fl_2, \\
&X_{J} \cong \Pet{3}\times \Pet{4}\times \Pet{2}.
\end{align*}
}
\end{example}

\smallskip

Compared to Schubert varieties and dual Schubert varieties in $Fl_n$, the structures of $X_J$ and $\Omega_J$ in $\Pet{n}$ are rather simple as we explain below.
To begin with, we make the following definition.
\begin{definition}
For $1\le i\le n-1$, let
\begin{equation}
\begin{split}\label{eq: div 10}
D_{i} \coloneqq X_{[n-1]\setminus\{i\}}
\quad \text{and} \quad
E_{i} \coloneqq \Omega_{\{i\}}.
\end{split}
\end{equation}
where $X_{[n-1]\setminus\{i\}}$ and $\Omega_{\{i\}}$ are defined in \eqref{eq: subavr 10}.
\end{definition}

\begin{lemma}\label{lem: divisors}
For $1\le i\le n-1$, the following hold. 
\begin{itemize}
 \item[(1)] $D_{i}$ and $E_{i}$ have codimension $1$ in $\Pet{n}$.
 \item[(2)] $D_{i}\cap E_{i} = \emptyset$.
\end{itemize}
\end{lemma}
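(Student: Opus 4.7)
The statement decomposes into two independent assertions. For part (2), I would apply Proposition~\ref{prop: point intersection 2} with $J=[n-1]\setminus\{i\}$ and $J'=\{i\}$: since $i\notin[n-1]\setminus\{i\}$, the hypothesis $J\supseteq J'$ fails, and hence
\[
D_i\cap E_i \;=\; X_{[n-1]\setminus\{i\}}\cap \Omega_{\{i\}} \;=\; \emptyset.
\]

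For part (1), the codimension of $D_i$ follows immediately from Proposition~\ref{prop: XJ and Hess}, which gives $\dim_{\C}D_i=|[n-1]\setminus\{i\}|=n-2$; together with $\dim_{\C}\Pet{n}=n-1$ this is codimension one. For $E_i=\Omega_{s_i}\cap\Pet{n}$, where $s_i=w_{\{i\}}$, no analogous product/Hessenberg description is available, so my plan is to estimate $\dim_{\C}E_i$ from two sides inside the smooth ambient variety $Fl_n$.

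For the lower bound, $\Omega_{s_i}$ has codimension $\ell(s_i)=1$ in $Fl_n$, and the standard intersection-dimension inequality on a smooth variety gives
\[
\codim(C,Fl_n)\;\le\;\codim(\Pet{n},Fl_n)+\codim(\Omega_{s_i},Fl_n)
\]
for every irreducible component $C$ of $\Pet{n}\cap\Omega_{s_i}$, which rearranges to $\dim_{\C}C\ge (n-1)-1=n-2$. For the matching upper bound it suffices to exhibit a single point of $\Pet{n}$ outside $\Omega_{s_i}$, so that irreducibility of $\Pet{n}$ forces $E_i\subsetneq\Pet{n}$ and hence $\dim_{\C}E_i\le n-2$. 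The identity flag $V_k^{e}=\langle e_1,\dots,e_k\rangle$ lies in $\Pet{n}$ (it is the fixed point $w_\emptyset$ in \eqref{eq: fixed points of Pet}) but not in $\Omega_{s_i}$, because the identity permutation is not $\ge s_i$ in the Bruhat order and $\Omega_{s_i}$ contains precisely the permutation flags whose underlying permutation is $\ge s_i$. Combining the two bounds yields $\dim_{\C}E_i=n-2$.

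I do not foresee a significant obstacle. The Hessenberg description of Proposition~\ref{prop: XJ and Hess} handles $D_i$ without calculation, Proposition~\ref{prop: point intersection 2} trivially gives the disjointness in (2), and only the upper bound for $\dim_{\C}E_i$ requires a choice at all, namely the identity flag as a Bruhat witness that $\Pet{n}$ is not swallowed by the codimension-one subvariety $\Omega_{s_i}$.
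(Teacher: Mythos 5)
Your treatment of (2) and of $D_i$ is exactly the paper's: disjointness follows from Proposition~\ref{prop: point intersection 2} with $J=[n-1]\setminus\{i\}$ and $J'=\{i\}$, and $\dim_{\C}D_i=n-2$ from Proposition~\ref{prop: XJ and Hess}. For $E_i$ your route is also essentially the paper's, differing only in how the lower bound is justified: the paper notes that $\Omega_{s_i}$ is an irreducible codimension-one subvariety of the smooth $Fl_n$, hence locally cut out by a single function, so every component of $E_i=\Omega_{s_i}\cap\Pet{n}$ has codimension at most $1$ in $\Pet{n}$; you instead invoke the intersection-dimension inequality in the smooth ambient $Fl_n$. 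These are interchangeable standard tools. Your upper bound, using the identity flag $w_{\emptyset}\in\Pet{n}\setminus\Omega_{s_i}$ together with irreducibility of $\Pet{n}$, is precisely the paper's argument.

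One small step is missing: you never verify that $E_i\neq\emptyset$. Your lower bound is quantified over the irreducible components of $\Pet{n}\cap\Omega_{s_i}$ and is vacuous if there are none, and the claim that $E_i$ has codimension $1$ (rather than being empty) genuinely requires a point of the intersection. This is immediate to supply — the fixed point $s_i=w_{\{i\}}$ lies in $\Pet{n}$ by \eqref{eq: fixed points of Pet} and obviously in $\Omega_{s_i}$ — and the paper records exactly this observation ($s_i\in\Omega_{s_i}\cap\Pet{n}$). With that one line added, your proof is complete.
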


\begin{proof}
For (1), we have $\dim_{\C}D_i=\dim_{\C} X_{[n-1]\setminus\{i\}} =n-2$ by Proposition~\ref{prop: XJ and Hess}. We also have 
\begin{align*}
E_{i} = \Omega_{\{i\}} = \Omega_{w_{\{i\}}}\cap\Pet{n}= \Omega_{s_i}\cap\Pet{n} .
\end{align*}
It is well-known that $\Omega_{s_i}$ is irreducible and it has complex codimension $1$ in $Fl_n$ (\cite[Sect.\ 10.2]{fult97}). Hence $\Omega_{s_i}$ in $Fl_n$ is locally cut out by a single function. We also know that $\Omega_{s_i}\cap \Pet{n}$ is a non-empty proper subset of $\Pet{n}$ since we have $s_i\in \Omega_{s_i}\cap \Pet{n}$ and $\text{id}=w_{\emptyset}\in\Pet{n}\setminus\Omega_{s_i}$. Thus, it follows that $\dim_{\C}E_{i} =n-2$. For (2), the claim follows from Proposition~\ref{prop: point intersection 2}.
\end{proof}

In the next subsection, we will see that $D_i$ and $E_i$ are divisors\footnote{In this paper, a divisor on a variety $Y$ always means the support of an effective Cartier divisor on $Y$, i.e., the zero locus of a section of a line bundle over $Y$.} on $\Pet{n}$. The following claim means that $X_J$ and $\Omega_J$ can be described as intersections of divisors on $\Pet{n}$.

\begin{proposition}\label{prop: divisor expression}
For $J\subseteq [n-1]$, we have
\begin{align}\label{eq: div exp 10}
X_{J} =\bigcap_{i\notin J} D_i
\quad \text{and} \quad
\Omega_{J} = \bigcap_{i\in J} E_i .
\end{align}
\end{proposition}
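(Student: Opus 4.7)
The plan is to establish the two identities independently, using the Hessenberg variety description from Proposition~\ref{prop: XJ and Hess} for the first, and a Bruhat cell-decomposition argument for the second.

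For $X_J = \bigcap_{i \notin J} D_i$, I would apply Proposition~\ref{prop: XJ and Hess} to rewrite both sides as regular nilpotent Hessenberg varieties: $X_J = \Hess(N, h_J)$ and $D_i = \Hess(N, h_{[n-1] \setminus \{i\}})$. From the defining conditions in \eqref{eq:DefinitionHessenbergVariety}, the intersection of Hessenberg varieties for a fixed $X$ equals the Hessenberg variety of the pointwise minimum of the Hessenberg functions, so it suffices to check
\begin{equation*}
h_J(j) = \min \bigl\{ h_{[n-1] \setminus \{i\}}(j) \, \big| \, i \in [n-1] \setminus J \bigr\} \qquad \text{for } 1 \le j \le n.
\end{equation*}
This follows directly from \eqref{eq: def of hJ}: when $j \in J$ or $j = n$, every $h_{[n-1] \setminus \{i\}}$ with $i \neq j$ agrees with $h_J$ at $j$; when $j \in [n-1] \setminus J$, the term with $i = j$ forces the minimum down to $j = h_J(j)$.

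For $\Omega_J = \bigcap_{i \in J} E_i$, the plan is to decompose both sides into the same disjoint union of pieces $\Omega_{w_K}^\circ \cap \Pet{n}$. Starting from the Bruhat cell decomposition $\Omega_w = \bigsqcup_{v \geq w} \Omega_v^\circ$ in $Fl_n$, intersecting with $\Pet{n}$ and invoking Lemma~\ref{lem: intersection with dual cell} to discard all cells $\Omega_v^\circ$ not indexed by some $w_K$ yields
\begin{equation*}
\Omega_J \; = \; \bigsqcup_{K \, : \, w_K \geq w_J} \bigl( \Omega_{w_K}^\circ \cap \Pet{n} \bigr) \; = \; \bigsqcup_{K \supseteq J} \bigl( \Omega_{w_K}^\circ \cap \Pet{n} \bigr),
\end{equation*}
where the second equality uses Lemma~\ref{lem: So-san's lemma}. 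The same procedure applied to $\bigcap_{i \in J} \Omega_{s_i} = \bigsqcup\{\Omega_v^\circ \mid v \geq s_i \text{ for all } i \in J\}$ gives
\begin{equation*}
\bigcap_{i \in J} E_i \; = \; \bigsqcup_{K \, : \, w_K \geq s_i \, \forall i \in J} \bigl( \Omega_{w_K}^\circ \cap \Pet{n} \bigr) \; = \; \bigsqcup_{K \supseteq J} \bigl( \Omega_{w_K}^\circ \cap \Pet{n} \bigr),
\end{equation*}
where the last equality uses Lemma~\ref{lem: So-san's lemma 1}. Comparing the two decompositions completes the proof.

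The main subtlety is that the implication ``$v \geq s_i$ for all $i \in J$ implies $v \geq w_J$'' is \emph{false} in $\Sn$ in general: for instance with $n = 3$ and $J = \{1,2\}$, the element $v = s_1 s_2$ dominates both $s_1$ and $s_2$ but not $w_J = w_0$. What rescues the argument on $\Pet{n}$ is Lemma~\ref{lem: intersection with dual cell}: only cells indexed by the $\C^\times$-fixed points $w_K$ contribute to either side, and for such elements the implication does hold by Lemmas~\ref{lem: So-san's lemma 1} and \ref{lem: So-san's lemma}.
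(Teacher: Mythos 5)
Your proof is correct. For the second identity you follow essentially the same route as the paper: decompose each $\Omega_{s_i}$ into dual Schubert cells, intersect with $\Pet{n}$, discard the cells not indexed by some $w_K$ via Lemma~\ref{lem: intersection with dual cell}, and match the two resulting disjoint unions using Lemmas~\ref{lem: So-san's lemma 1} and \ref{lem: So-san's lemma}; your explicit remark that ``$v\ge s_i$ for all $i\in J$'' does not force ``$v\ge w_J$'' in $\Sn$ but does for the fixed-point elements $w_K$ is exactly the point the paper's argument uses implicitly. For the first identity you genuinely diverge: the paper reuses the paving \eqref{eq: decomp of XJ}, writing $\bigcap_{i\notin J}D_i=\bigsqcup_{J'\subseteq J}(X^{\circ}_{w_{J'}}\cap\Pet{n})=X_J$, whereas you invoke Proposition~\ref{prop: XJ and Hess} to identify $X_J$ and each $D_i$ with regular nilpotent Hessenberg varieties and observe that intersecting Hessenberg varieties for the fixed matrix $N$ amounts to taking the pointwise minimum of the Hessenberg functions (valid because the flag is nested, so $V_{h(j)}\cap V_{h'(j)}=V_{\min(h(j),h'(j))}$), reducing the claim to the elementary check $h_J=\min_{i\notin J}h_{[n-1]\setminus\{i\}}$. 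This is legitimate --- Proposition~\ref{prop: XJ and Hess} is established before and independently of the present statement, so there is no circularity --- and it is arguably more concrete, while the paper's version buys uniformity by handling both identities with one and the same cell-decomposition mechanism. The only cosmetic caveat is the case $J=[n-1]$, where the left-hand intersection runs over the empty index set and should be read as all of $\Pet{n}$, which indeed equals $X_{[n-1]}=\Pet{n}$.
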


\begin{proof}
By \eqref{eq: decomp of XJ}, we have
\begin{align*}
 X_{w_{[n-1]\setminus\{i\}}}\cap \Pet{n} = \bigsqcup_{J'\subseteq [n-1]\setminus\{i\}} (X^{\circ}_{w_{J'}}\cap \Pet{n}).
\end{align*}
This implies from the definition of $D_i$ that
\begin{align*}
 \bigcap_{i\notin J} D_i
 = \bigcap_{i\notin J} (X_{w_{[n-1]\setminus\{i\}}}\cap \Pet{n}) 
 = \bigsqcup_{J'\subseteq J} (X^{\circ}_{w_{J'}}\cap \Pet{n}).
\end{align*}
Combining this with \eqref{eq: decomp of XJ}, we obtain the desired claim for $X_J$. An argument similar to this proves that 
\begin{align*}
 \bigcap_{i\in J} E_i
 = \bigcap_{i\in J} (\Omega_{w_{\{i\}}}\cap \Pet{n}) 
 = \bigsqcup_{J''\supseteq J} (\Omega^{\circ}_{w_{J''}}\cap \Pet{n})
 = \Omega_{w_J}\cap\Pet{n}
 = \Omega_J
\end{align*}
by Lemmas \ref{lem: So-san's lemma} and \ref{lem: intersection with dual cell}.
\end{proof}

\begin{example}\label{eg: intersection}
{\rm 
Let $n=9$ and $J=\{2,3,4\}\sqcup\{7,8\}$ so that $[n-1]\setminus J=\{1\}\sqcup\{5,6\}$. Then we have 
\begin{align*}
X_{J} = D_1\cap D_5\cap D_6
\qquad \text{and} \qquad
\Omega_{J} = E_2\cap E_3\cap E_4\cap E_7\cap E_8.
\end{align*}
}
\end{example}

\smallskip

\subsection{Defining equations of $X_J$ and $\Omega_J$}\label{subsec: equation of X and Omega}
Let $B\subseteq {\rm GL}_n(\C)$ be the Borel subgroup of upper-triangular matrices in ${\rm GL}_n(\C)$. Then we have the standard identification $Fl_n={\rm GL}_n(\C)/B$ as is well known. Recall that ${\rm GL}_n(\C)\rightarrow {\rm GL}_n(\C)/B(=Fl_n)$ is a principal $B$-bundle. Thus, for a complex $B$-representation space $V$, we have the associated complex vector bundle\footnote{We take the $B$-action on the product ${\rm GL}_n(\C)\times V$ so that $[g,v]=[gb,b^{-1}\cdot v]$ in the quotient.} over ${\rm GL}_n(\C)/B$:
\begin{align*}
{\rm GL}_n(\C)\times^B V \rightarrow {\rm GL}_n(\C)/B
\quad ; \quad
[g,v]\mapsto gB.
\end{align*}
For a weight $\mu\colon T\rightarrow \C^{\times}$, we obtain $\mu\colon B\rightarrow \C^{\times}$ by composing that with the canonical projection $B\twoheadrightarrow T$, which we also denote by the same symbol. We denote by $\C_{\mu}=\C$ the corresponding 1-dimensional representation space of $B$. Set
\begin{align}\label{eq: def of Lmu 2}
L_{\mu} = {\rm GL}_n(\C)\times^B \C_{\mu}^* = {\rm GL}_n(\C)\times^B \C_{-\mu},
\end{align}
where $\C_{\mu}^*$ is the dual representation space of $\C_{\mu}$. We also denote the restriction $L_{\mu}|_{\Pet{n}}$ by the same symbol $L_{\mu}$ when there are no confusion.

Let us introduce two representations of $B$ associated with each $J\subseteq [n-1]$ as follows. For $1\le i\le n-1$, let $\varpi_i\colon T\rightarrow \C^{\times}$ be the $i$-th fundamental weight of $T$ given by $\text{diag}(t_1,t_2,\ldots,t_n)\mapsto t_1t_2\cdots t_i$. For $J\subseteq [n-1]$, we obtain a representation space of $T$ given by a direct sum
\begin{align*}
\bigoplus_{i\in J} \C_{\varpi_i}^*.
\end{align*}
Through the canonical projection $B\twoheadrightarrow T$, we regard this as a representation of $B$. To introduce the other representation of $B$ associated with $J$, 
let $\alpha_i$ $(1\le i\le n-1)$ be the $i$-th simple root defined as a weight $\alpha_i\colon T\rightarrow \C^{\times}$ given by $\text{diag}(t_1,t_2,\ldots,t_n)\mapsto t_{i}t_{i+1}^{-1}$. Let $H_J\subseteq \mathfrak{gl}_n(\C)$ be the Hessenberg subspace (cf.\ \cite[Sect.~2]{ty}) corresponding to the Hessenberg function $h_J$ defined in \eqref{eq: def of hJ}, that is,
\begin{align}\label{eq: HJ}
&H_{J} \coloneqq \mathfrak{b}\oplus\bigoplus_{i\in J} \mathfrak{g}_{-\alpha_i} \subseteq \mathfrak{gl}_n(\C),
\end{align}
where $\mathfrak{b}=\Lie(B)$ is the Lie algebra of $B$ and each $\mathfrak{g}_{-\alpha_{i}}$ is the standard root space of $\mathfrak{gl}_n(\C)$ associated with the $i$-th negative simple root $-\alpha_i$ $(1\le i\le n-1)$. Since $H_J$ is preserved by the adjoint action of $B$ on $\mathfrak{gl}_n(\C)$, the quotient space
\begin{align*}
H_{[n-1]}/H_J
\end{align*}
is a representation space of $B$. Now, these two representations of $B$ induce the following vector bundles over $Fl_n$: 
\begin{align*}
&U_{J} 
\coloneqq {\rm GL}_n(\C)\times^B \left(H_{[n-1]}/H_J\right), \\
&V_{J} 
\coloneqq {\rm GL}_n(\C)\times^B \left(\bigoplus_{i\in J} \C_{\varpi_i}^*\right).
\end{align*}
If there are no confusion, we denote the restrictions of $U_{J}$ and $V_{J}$ on $\Pet{n}$ by the same symbol. Note that we have
\begin{equation}
\begin{split}\label{eq: rank of U and J}
&\rank U_{J}=(n-1)-|J|, \\
&\rank V_{J}=|J|.
\end{split}
\end{equation}

Recall that 
\begin{align*}
\Pet{n} = \{gB \in {\rm GL}_n(\C)/B \mid g^{-1}Ng\in H_{[n-1]}\}
\end{align*}
(cf.\ \cite{Insko-Tymoczko,Rietsch06} or \cite[Sect.\ 2]{ty}).
Thus, the following map gives a section of $U_{J}$ over $\Pet{n}$ :
\begin{align*}
\phi_J \colon  \Pet{n} \rightarrow U_{J}
\quad ; \quad 
\phi_J(gB) = [g, [g^{-1}Ng]],
\end{align*}
where $[g^{-1}Ng]\in H_{[n-1]}/H_J$ is the class represented by $g^{-1}Ng \in H_{[n-1]}$. For $1\le i\le n-1$, let 
\begin{align*}
\text{det}_i \colon {\rm GL}_n(\C)\rightarrow \C_{\varpi_i}(=\C)
\end{align*} 
be the function which takes the leading principal minor of order $i$. This is a $B$-equivariant function with respect to the multiplication of $B$ on ${\rm GL}_n(\C)$ from the right. Thus, we have a well-defined section 
\begin{align*}
\psi_J \colon  \Pet{n} \rightarrow V_J
\quad ; \quad
gB\mapsto \Big[g,\sum_{i\in J}\text{det}_i(g)\Big].
\end{align*}
The following claim means that $X_J$ and $\Omega_J$ in $\Pet{n}$ are defined by the equation $\phi_J=0$ and $\psi_J=0$, respectively.

\begin{proposition}\label{prop: zero locus}
For $J\subseteq[n-1]$, we have 
\begin{align*}
 X_J=Z(\phi_J) \quad \text{and} \quad \Omega_J =Z(\psi_J) ,
\end{align*}
where $Z(\phi_J)$ and $Z(\psi_J)$ denote the zero loci of the sections $\phi_J$ and $\psi_J$, respectively.
\end{proposition}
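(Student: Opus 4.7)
My plan for the first equality $X_J = Z(\phi_J)$ is to unpack the definition of $\phi_J$: for $gB \in \Pet{n}$ we have $g^{-1}Ng \in H_{[n-1]}$, and the value $\phi_J(gB) = [g, [g^{-1}Ng]]$ vanishes in the fiber of $U_J$ if and only if $g^{-1}Ng \in H_J$. The matrix condition $g^{-1}Ng \in H_J$ translates directly into the flag-theoretic Hessenberg condition $NV_j \subseteq V_{h_J(j)}$ for all $j \in [n]$ defining $\Hess(N,h_J)$. Thus $Z(\phi_J) = \Hess(N,h_J)$ (using $\Hess(N,h_J) \subseteq \Pet{n}$), and Proposition~\ref{prop: XJ and Hess} then gives $Z(\phi_J) = X_J$.

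For the second equality $\Omega_J = Z(\psi_J)$, my plan is to exploit the fact that the $B$-representation $\bigoplus_{i\in J} \C^*_{\varpi_i}$ is a direct sum of one-dimensional pieces, so as vector bundles $V_J \cong \bigoplus_{i\in J} L_{\varpi_i}$. Under this decomposition $\psi_J$ splits as a tuple of sections $\psi_J^{(i)}$ with $\psi_J^{(i)}(gB) = [g, \det_i(g)] \in L_{\varpi_i}$. Each $\psi_J^{(i)}$ is well defined thanks to the $B$-equivariance identity $\det_i(gb) = \varpi_i(b)\det_i(g)$ for $b \in B$ (which reduces to the multiplicativity of the upper-left $i \times i$ determinant under upper-triangular right multiplication). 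Hence $Z(\psi_J) = \bigcap_{i\in J} Z(\psi_J^{(i)})$, and the key remaining step is to identify $Z(\psi_J^{(i)})$ in $Fl_n$ with the dual Schubert divisor $\Omega_{s_i}$. This follows from the classical observation that $\det_i(g) = 0$ if and only if $V_i \cap \langle e_{i+1}, \ldots, e_n\rangle \neq 0$, which is the standard codimension-one condition describing $\Omega_{s_i}$ in $Fl_n$. Restricting to $\Pet{n}$ then yields $Z(\psi_J^{(i)}|_{\Pet{n}}) = \Omega_{s_i} \cap \Pet{n} = E_i$, and Proposition~\ref{prop: divisor expression} converts $\bigcap_{i \in J} E_i$ into $\Omega_J$.

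The main point that deserves care is the identification of $\det_i$ with a global defining section of $\Omega_{s_i}$ in $Fl_n$, which is a classical Plücker-coordinate computation but should be checked against the paper's convention $L_\mu = {\rm GL}_n(\C) \times^B \C^*_\mu$ (hence the appearance of $\varpi_i$, rather than $-\varpi_i$, in the equivariance identity above). Everything else is a direct translation between the group-theoretic setup and the definitions of $X_J$ and $\Omega_J$.
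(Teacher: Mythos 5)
Your proposal is correct and follows essentially the same route as the paper: the first equality comes from identifying the vanishing of $\phi_J$ with the condition $g^{-1}Ng\in H_J$ defining $\Hess(N,h_J)$ and invoking Proposition~\ref{prop: XJ and Hess}, and the second from splitting $\psi_J$ into the sections $\psi_{\{i\}}$, identifying $Z(\psi_{\{i\}})=\Omega_{s_i}\cap\Pet{n}=E_i$ via $\det_i$, and applying Proposition~\ref{prop: divisor expression}. The only cosmetic difference is that you spell out the equivariance of $\det_i$ and the linear-algebra description of $\Omega_{s_i}$, which the paper simply cites from \cite[Sect.\ 10.6]{fult97}.
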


\begin{proof}
Since the defining condition of $\Hess(N,h_{J})$ is precisely that $g^{-1}Ng\in H_{J}$ (e.g. \cite[Sect.~2]{ty}), it is clear that we have $X_J=Z(\phi_J)$. It is known that 
\begin{align*}
\Omega_{s_i} = \{gB \in {\rm GL}_n(\C)/B \mid \text{det}_i(g)=0\}
\end{align*}
as subsets of $Fl_n$ (cf.\ \cite[Proposition~9 in Sect.\ 10.6]{fult97}). Thus, it follows that
\begin{align*}
Z(\psi_{\{i\}}) = \Omega_{s_i}\cap\Pet{n}=E_i.
\end{align*}
Now, we obtain that
\begin{align*}
Z(\psi_J) = \bigcap_{i\in J} Z(\psi_{\{i\}}) = \bigcap_{i\in J} E_i = \Omega_{J}
\end{align*}
by Proposition~\ref{prop: divisor expression}.
\end{proof}

\begin{corollary}\label{cor: cartier}
For $1\le i\le n-1$, $D_i$ and $E_i$ are divisors on $\Pet{n}$.
\end{corollary}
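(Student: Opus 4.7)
The plan is to reduce the statement to a direct consequence of Proposition~\ref{prop: zero locus} by specializing the subset $J$ appropriately. For $E_i$, I would take $J=\{i\}$ so that Proposition~\ref{prop: zero locus} gives $E_i = Z(\psi_{\{i\}})$ where $\psi_{\{i\}}$ is a section of the bundle $V_{\{i\}}$. By the rank formula \eqref{eq: rank of U and J}, this bundle has rank $|\{i\}|=1$, hence is a line bundle. Similarly, for $D_i$, I would take $J=[n-1]\setminus\{i\}$ so that Proposition~\ref{prop: zero locus} gives $D_i = Z(\phi_{[n-1]\setminus\{i\}})$ where $\phi_{[n-1]\setminus\{i\}}$ is a section of $U_{[n-1]\setminus\{i\}}$, whose rank is $(n-1)-|[n-1]\setminus\{i\}| = (n-1)-(n-2) = 1$. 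Thus $D_i$ and $E_i$ are each the zero locus of a section of a line bundle on $\Pet{n}$.

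The remaining point is to verify that these zero loci are honest effective Cartier divisors in the sense stipulated by the paper, i.e.\ that the defining sections do not vanish identically on $\Pet{n}$. Since $\Pet{n}$ is irreducible (being a regular nilpotent Hessenberg variety, as noted in Section~\ref{subsect: Hessenberg}), it suffices to observe that neither $D_i$ nor $E_i$ equals all of $\Pet{n}$: by Lemma~\ref{lem: divisors}~(1), both have codimension exactly $1$ in $\Pet{n}$. Hence the sections $\phi_{[n-1]\setminus\{i\}}$ and $\psi_{\{i\}}$ are nonzero, and their vanishing loci are supports of effective Cartier divisors.

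There is no real obstacle here; the corollary is a clean packaging of Proposition~\ref{prop: zero locus}, the rank computation \eqref{eq: rank of U and J}, and the codimension statement of Lemma~\ref{lem: divisors}. The only conceptual step is matching the choices $J=\{i\}$ and $J=[n-1]\setminus\{i\}$ with the rank-one cases of $V_J$ and $U_J$, respectively.
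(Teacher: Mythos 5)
Your proposal is correct and follows essentially the same route as the paper: apply Proposition~\ref{prop: zero locus} with $J=[n-1]\setminus\{i\}$ and $J=\{i\}$, and note via \eqref{eq: rank of U and J} that $U_{[n-1]\setminus\{i\}}$ and $V_{\{i\}}$ are line bundles. Your extra verification that the sections do not vanish identically (via Lemma~\ref{lem: divisors}~(1)) is a harmless addition the paper leaves implicit, given its convention that a divisor means the zero locus of a section of a line bundle.
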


\begin{proof}
By \eqref{eq: rank of U and J}, it follows that $U_{[n-1]\setminus\{i\}}$ is a line bundle, and we have $D_i=X_{[n-1]\setminus\{i\}}=Z(\phi_{[n-1]\setminus\{i\}})$ by the previous proposition. This means that $D_i$ is a divisor on $\Pet{n}$. Similarly, $E_i=\Omega_{\{i\}}=Z(\psi_{\{i\}})$ is a divisor on $\Pet{n}$ since $V_{\{i\}}$ is a line bundle by \eqref{eq: rank of U and J}.
\end{proof}

\begin{corollary}\label{cor: codim of OmegaJ}
For $J\subseteq[n-1]$, we have $\codim_{\C} \Omega_J=|J|$ in $\Pet{n}$.
\end{corollary}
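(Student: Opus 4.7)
The plan is to combine Proposition~\ref{prop: zero locus} with Proposition~\ref{prop: point intersection 2}. By Proposition~\ref{prop: zero locus}, $\Omega_J$ is realized as the zero locus $Z(\psi_J)$ of a section of the vector bundle $V_J$; by~\eqref{eq: rank of U and J}, this bundle has rank $|J|$. Since a section of a rank-$|J|$ bundle is locally given by $|J|$ scalar equations on the irreducible variety $\Pet{n}$, Krull's Hauptidealsatz, applied iteratively, implies that every non-empty irreducible component of $\Omega_J$ has codimension at most $|J|$ in $\Pet{n}$. This yields one direction, $\codim_\C \Omega_J \leq |J|$.

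For the matching lower bound, I would use that $X_J$ is an irreducible subvariety of $\Pet{n}$ of dimension $|J|$ (by Proposition~\ref{prop: XJ and Hess}, or by the product decomposition in Corollary~\ref{lem: decomp of PetJ}) which meets $\Omega_J$ in the single point $\{w_J\}$ (by Proposition~\ref{prop: point intersection 2}). Since $X_J \cap \Omega_J$ is $0$-dimensional while both $X_J$ and $\Omega_J$ live in the $(n-1)$-dimensional irreducible variety $\Pet{n}$, a dimension-of-intersection argument forces $\dim X_J + \dim \Omega_J \leq n-1$, and hence $\dim \Omega_J \leq n-1-|J|$, giving the reverse inequality.

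The main subtle point is that $\Pet{n}$ is singular in general, so the dimension-of-intersection inequality $\dim(X\cap Y)\geq \dim X+\dim Y-\dim Z$ is not automatic. This is handled by the local complete intersection structure of $\Omega_J$: since $\Omega_J=Z(\psi_J)$ is locally cut out by $|J|$ functions on $\Pet{n}$, one may apply Krull's theorem on $X_J$ to the restrictions $\psi_J|_{X_J}$ and compare with $\dim(X_J\cap \Omega_J)=0$ in $X_J$. Alternatively, one could bypass the issue by computing $\dim \Omega_J$ directly from the stratification $\Omega_J = \bigsqcup_{J'' \supseteq J}(\Omega^\circ_{w_{J''}} \cap \Pet{n})$ exhibited in the proof of Proposition~\ref{prop: divisor expression}, together with the (known) dimension formula $\dim(\Omega^\circ_{w_{J''}}\cap \Pet{n})=(n-1)-|J''|$; the maximum over $J''\supseteq J$ is attained at $J''=J$, yielding $\dim\Omega_J=(n-1)-|J|$.
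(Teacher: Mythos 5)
Your first inequality ($\codim_{\C}\Omega_J\le |J|$, i.e.\ $\dim_{\C}\Omega_J\ge (n-1)-|J|$) is exactly the paper's first step: $\Omega_J=Z(\psi_J)$ with $\rank V_J=|J|$, plus \cite[Proposition 14.1]{Fulton}. The problem is the reverse inequality. Your primary route --- $X_J\cap\Omega_J=\{w_J\}$ is zero-dimensional, hence $\dim X_J+\dim\Omega_J\le n-1$ --- needs the intersection-dimension inequality in the ambient $\Pet{n}$, which, as you yourself note, is unavailable because $\Pet{n}$ is singular in general. But the repair you propose does not close this gap: applying Krull on $X_J$ to the $|J|$ local equations $\psi_J|_{X_J}$ only says that every component of $X_J\cap\Omega_J$ has dimension at least $|J|-|J|=0$, which is vacuous and yields no upper bound on $\dim\Omega_J$. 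The Krull argument has to be run in the other direction: on an irreducible component $Z$ of $\Omega_J$, intersect with $X_J=Z(\phi_J)$, which is locally cut out by $\rank U_J=(n-1)-|J|$ equations, to get $\dim(Z\cap X_J)\ge \dim Z-\bigl((n-1)-|J|\bigr)$ and hence $\dim Z\le (n-1)-|J|$ --- but this only works if $Z\cap X_J\ne\emptyset$, and nothing you have established guarantees that every irreducible component of $\Omega_J$ contains the single intersection point $w_J$; a priori a component could miss $X_J$ entirely. So the main route, as written, has a genuine gap.

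Your fallback via the stratification $\Omega_J=\bigsqcup_{J''\supseteq J}(\Omega^{\circ}_{w_{J''}}\cap\Pet{n})$ is logically sound, but it imports the dimension formula $\dim(\Omega^{\circ}_{w_{J''}}\cap\Pet{n})=(n-1)-|J''|$, which is neither proved nor stated in the paper; it is a consequence of Peterson's theorem identifying these strata with (localized) quantum cohomology rings of partial flag varieties (\cite{Peterson,Rietsch06}), a much heavier external input than the corollary itself. The paper instead proves the upper bound by an elementary downward induction on $|J|$: for $i\notin J$ and $K=J\sqcup\{i\}$, Proposition~\ref{prop: divisor expression} gives $\Omega_K=\Omega_J\cap E_i=Z(\psi_{\{i\}}|_{\Omega_J})$, so the same zero-locus bound applied on $\Omega_J$ gives $\dim\Omega_K\ge\dim\Omega_J-1$; starting from $\Omega_{[n-1]}=\{w_0\}$ and inducting yields $\dim\Omega_J\le(n-1)-|J|$. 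If you want to salvage your $X_J$-based idea, you would first have to show that every component of $\Omega_J$ meets $X_J$ (e.g.\ via $\C^{\times}$-invariance of the components and a limit/fixed-point argument), extra work that the paper's induction avoids.
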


\begin{proof}
Recall that $\Omega_J =Z(\psi_J)$ and $\rank V_{J}=|J|$. This means that $\Omega_J$ in $\Pet{n}$ is locally cut out by $|J|$ functions, and it follows that each irreducible component of $\Omega_J$ has codimension at most $|J|$ in $\Pet{n}$ (\cite[Proposition 14.1]{Fulton}). This implies that
\begin{align}\label{eq: ineq from equations}
\dim_{\C}\Omega_J\ge (n-1)-|J|
\end{align}
since $\dim_{\C}\Pet{n}=n-1$ and $\Omega_J=\Omega_{w_J}\cap\Pet{n}\ne\emptyset$. We show that the equality holds by induction on $(n-1)-|J|$. When $J=[n-1]$, we have $\Omega_{[n-1]}=\Omega_{w_0}\cap \Pet{n}=\{w_0\}$ so that the claim is obvious. Let $J\subsetneq[n-1]$, and assume by induction that $\dim_{\C}\Omega_K= (n-1)-|K|$ for $J\subsetneq K\subseteq[n-1]$. We prove that $\dim_{\C}\Omega_J= (n-1)-|J|$. Take an element $i\in[n-1]\setminus J$, and set $K=J\sqcup\{i\}$. Then we have 
\begin{align}\label{eq: decreasing sequence}
\Omega_{K}=\Omega_J\cap E_i \subseteq \Omega_J
\end{align}
by Proposition~\ref{prop: divisor expression}. This means that $\Omega_{K}$ is the zero locus of the section $\psi_{\{i\}}$ of the line bundle $V_{\{i\}}$ restricted over $\Omega_{J}$. Thus we see that 
\begin{align}\label{eq: zero-locus ineq}
\dim_{\C}\Omega_{K} \ge \dim_{\C} \Omega_{J} -1,
\end{align}
which follows by applying \cite[Proposition 14.1]{Fulton} to each irreducible component of $\Omega_{J}$. Namely, the dimension decreases at most by 1 in \eqref{eq: decreasing sequence}. Since we have $\dim_{\C}\Omega_K= (n-1)-|K|$ by the inductive hypothesis, we can rewrite \eqref{eq: zero-locus ineq} as
\begin{align*}
\dim_{\C}\Omega_{J} \le \dim_{\C}\Omega_K +1 = (n-1) - |K| + 1 = (n-1) - |J|.
\end{align*}
Combining this with \eqref{eq: ineq from equations}, we obtain the desired equality.
\end{proof}

\begin{remark}\label{rem: decomposition into line bundles}
The vector bundles $U_J$ and $V_J$ are decomposed into line bundles as
\begin{align*}
U_J = \bigoplus_{i\notin J} L_{\alpha_i}
\qquad \text{and}\qquad
V_J = \bigoplus_{i\in J} L_{\varpi_i}.
\end{align*} 
The first equality follows since we have $H_{[n-1]}/H_J\cong \oplus_{i\notin J}\C_{-\alpha_i}$ as representations of $B$, where the right hand side is a direct sum representation. These decomposition can be thought as the analogue of Proposition $\ref{prop: divisor expression}$ in the language of vector bundles over $\Pet{n}$.
\end{remark}

\bigskip

\section{The cohomology ring of $\Pet{n}$}\label{sec: cohomology}
In this section, we construct an additive basis of the integral cohomology ring $H^*(\Pet{n};\Z)$ by incorporating the geometry established in the previous section.
We also introduce the structure constants of the basis, and provide a geometric proof for their positivity.

For an algebraic variety $X$ which admits a cellular decomposition by complex affines spaces (which is also called a paving by affines), an irreducible Zariski closed subset $Y$ of $X$ has its fundamental cycle (as a reduced scheme) in $H_{2d}(Y;\Z)$, where $d=\dim_{\C}Y$.
By abusing notations, we use the same symbol for its image in $H_{2d}(X;\Z)$ under the induced map $i_*\colon H_{2d}(Y;\Z)\rightarrow H_{2d}(X;\Z)$, where $i$ is the inclusion map $i\colon Z\hookrightarrow X$. See \cite[Appendix B]{fult97} or \cite[Chap.\ 19]{Fulton} for the details.

\vspace{5pt}

\subsection{The homology group of $\Pet{n}$}
Recall that we have a decomposition of $Fl_n$ by the Schubert cells:
\begin{align*}
  Fl_n = \bigsqcup_{w\in\mathfrak{S}_n} X^{\circ}_{w}.
\end{align*}
This induces a decomposition
\begin{align*}
 \Pet{n} = \bigsqcup_{J\subseteq[n-1]} (X^{\circ}_{w_J}\cap \Pet{n}),
\end{align*}
by Lemmas~\ref{lem: So-san's lemma} and \ref{lem: intersection with dual cell}. It is known from \cite{Peterson,ty} that each $X^{\circ}_{w_J}\cap \Pet{n}$ is isomorphic to an affine cell $\C^{|J|}$ and that they form a paving by affines. Recall also from Proposition~\ref{prop: XJ and Hess} that we have
\begin{align*}
 X_J = \overline{X^{\circ}_{w_J}\cap \Pet{n}}
\end{align*}
and that $\dim_{\C}X_J=|J|$ for $J\subseteq[n-1]$. This implies that the cycles represented by $X_J$ for $J\subseteq[n-1]$ form a $\Z$-basis of the homology group $H_*(\Pet{n};\Z)$.

\begin{proposition}\label{prop: homology basis}
$($\cite{Peterson,ty}$)$
For each $J\subseteq [n-1]$, we have $[X_J]\in H_{2|J|}(\Pet{n};\Z)$, and the set
$\{ [X_J] \mid J\subseteq [n-1]\}$ is a $\Z$-basis of $H_*(\Pet{n};\Z)$; 
\begin{align*}
 H_*(\Pet{n};\Z) = \bigoplus_{J\subseteq [n-1]} \Z [X_J] .
\end{align*}
\end{proposition}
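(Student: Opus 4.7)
The plan is to invoke the standard principle that a complex algebraic variety admitting a paving by affine cells has torsion-free integer homology, concentrated in even degrees, with $\Z$-basis given by the fundamental classes of the closures of the cells. The paragraph immediately preceding the proposition already records that $\Pet{n}=\bigsqcup_{J\subseteq[n-1]}(X^{\circ}_{w_J}\cap\Pet{n})$ is such a paving, with each piece isomorphic to $\C^{|J|}$ (this is the content of \cite{Peterson,ty}), and Proposition~\ref{prop: XJ and Hess} identifies the closure of the cell indexed by $J$ with $X_J$. In particular $X_J$ is an irreducible (possibly singular) projective variety of complex dimension $|J|$, hence has a well-defined fundamental class in $H_{2|J|}(X_J;\Z)\cong\Z$; pushing forward by the inclusion $i\colon X_J\hookrightarrow\Pet{n}$ yields the class $[X_J]\in H_{2|J|}(\Pet{n};\Z)$, which establishes the first assertion.

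To prove that the $[X_J]$ form a basis, I would set up the closed filtration $F_k\coloneqq\bigcup_{|J|\le k} X_J$ for $0\le k\le n-1$, so that $F_{-1}=\emptyset$, $F_0=\{w_\emptyset\}$, $F_{n-1}=\Pet{n}$, and $F_k\setminus F_{k-1}=\bigsqcup_{|J|=k}(X^{\circ}_{w_J}\cap\Pet{n})$ is a disjoint union of copies of $\C^k$. By excision, $H_*(F_k,F_{k-1};\Z)$ is free abelian, concentrated in degree $2k$, with one generator for each $J$ with $|J|=k$ (the local class of the corresponding cell). Feeding this into the long exact sequence of the pair $(F_k,F_{k-1})$ and inducting on $k$, all connecting homomorphisms vanish for parity reasons, so each sequence splits into a short exact sequence of free abelian groups. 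This gives, at each stage, the basis of $H_*(F_k;\Z)$ indexed by $\{J : |J|\le k\}$, and taking $k=n-1$ yields the desired basis of $H_*(\Pet{n};\Z)$.

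The only point requiring care is the identification of the basis element coming from the cellular picture with the pushforward $i_*[X_J]$. This is a standard compatibility: since $X^{\circ}_{w_J}\cap\Pet{n}$ is an open dense smooth subset of the irreducible variety $X_J$, the complex orientation of the cell agrees with the orientation of the fundamental class of $X_J$ on the smooth locus, and the image of $[X_J]$ in $H_{2k}(F_k,F_{k-1};\Z)$ equals the class of the top cell. I do not anticipate a genuine obstacle here; the proposition is essentially the cellular homology computation for a complex variety with an affine paving, and aside from this routine orientation bookkeeping the argument is formal.
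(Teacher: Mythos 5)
Your argument is correct and is essentially the paper's own: the paper simply cites the affine paving of $\Pet{n}$ from \cite{Peterson,ty} together with Proposition~\ref{prop: XJ and Hess} identifying the cell closures with the $X_J$, and then invokes exactly the standard filtration/long-exact-sequence argument you spell out. Your proposal just makes that standard cellular computation explicit, so there is no substantive difference.
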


\vspace{5pt}

\begin{example}\label{ex: homology basis}
{\rm 
Let $n=4$ so that $[n-1]=\{1,2,3\}$. Then we have 
\begin{align*}
H_*(\Pet{n};\Z) &= \Z [X_{\emptyset}]\oplus(\Z [X_{\{1\}}]\oplus\Z [X_{\{2\}}]\oplus\Z [X_{\{3\}}])\\
&\hspace{75pt}\oplus(\Z [X_{\{1,2\}}]\oplus\Z [X_{\{1,3\}}]\oplus\Z [X_{\{2,3\}}])\oplus\Z [X_{\{1,2,3\}}].
\end{align*}}
\end{example}

\begin{remark}
Compared to $X^{\circ}_{w_J}\cap\Pet{n}(\cong \C^{|J|})$, the geometry of $\Omega^{\circ}_{w_J}\cap \Pet{n}$ is known to encode the quantum cohomology ring of a partial flag variety specified by $J$ $($\cite{Peterson, Rietsch06}$)$.
\end{remark}

\vspace{10pt}

\subsection{The cohomology group of $\Pet{n}$}
For each weight $\mu\colon T\rightarrow \C^{\times}$, we constructed a line bundle $L_{\mu}$ over $Fl_n$ in Section~\ref{subsec: equation of X and Omega}.
Recall also that $\alpha_i$ and $\varpi_i$ are the $i$-th simple root and the $i$-th fundamental weight of $T$ $(1\le i\le n-1)$, respectively.
It is well-known that we have an isomorphism
\begin{align*}
\bigoplus_{i=1}^{n-1} \Z\varpi_i
\stackrel{\cong}{\rightarrow} H^2(Fl_n;\Z) 
\quad ; \quad
\mu \mapsto e(L_{\mu}),
\end{align*}
where we regard each $\mu=a_1\varpi_1+\cdots+a_{n-1}\varpi_{n-1}\ (a_1,\ldots,a_{n-1}\in\Z)$ as the weight $\mu\colon T\rightarrow \C^{\times}$ given by $\text{diag}(t_1,\ldots,t_n)\mapsto t_1^{a_1}(t_1t_2)^{a_2}\cdots(t_1\cdots t_{n-1})^{a_{n-1}}$.
Let $i\colon \Pet{n}\hookrightarrow Fl_n$ be the inclusion map. Insko (\cite{Insko}) proved that the induced homomorphism $i_*\colon H_*(\Pet{n};\Z)\rightarrow H_*(Fl_n;\Z)$ is an injection whose image is a direct summand of $H_*(Fl_n;\Z)$. This means that the map $i_*\colon H_2(\Pet{n};\Z) \rightarrow H_2(Fl_n;\Z)$ on degree $2$ is an isomorphism since $\rank H_2(Fl_n;\Z)=\rank H_2(\Pet{n};\Z) =n-1$ (see Proposition~\ref{prop: homology basis}), and hence the restriction map 
\begin{align*}
i^*\colon H^2(Fl_n;\Z)  \stackrel{\cong}{\rightarrow} H^2(\Pet{n};\Z) 
\end{align*}
on degree $2$ cohomology group is an isomorphism. By combining these isomorphisms, we obtain that $\bigoplus_{i=1}^{n-1} \Z\varpi_i\cong H^2(\Pet{n};\Z)$. In the rest of the paper, we identify $\bigoplus_{i=1}^{n-1} \Z\varpi_i$ with $H^2(\Pet{n};\Z)$ through this isomorphism, and we use the same symbol $\mu\in H^2(\Pet{n};\Z)$ for the element  $e(L_{\mu})$ by abusing notation. For example, we write
\begin{align*}
\alpha_i = e(L_{\alpha_i})
\qquad \text{and} \qquad
\varpi_i = e(L_{\varpi_i})
\end{align*}
as elements in $ H^2(\Pet{n};\Z)$.

In Section~\ref{subsec: equation of X and Omega}, we constructed vector bundles $U_J$ and $V_{J}$ over $\Pet{n}$. Adopting the above notation, we may express the Euler class $e(V_J)$ as a monomial of $\varpi_i$ for each $i\in J$. Namely, for $J\subseteq[n-1]$, we have
\begin{align}\label{eq: monomial exp of e(VJ)}
e(V_J) = \prod_{i\in J} \varpi_i
\end{align}
since the vector bundle $V_{J}$ decomposes into line bundles as follows:
\begin{align*}
V_{J} 
= {\rm GL}_n(\C)\times^B \left(\bigoplus_{i\in J} \C_{\varpi_i}^*\right)
= \bigoplus_{i\in J} L_{\varpi_i}.
\end{align*}

For $J\subseteq[n-1]$, take the decomposition $J = J_1 \sqcup J_2 \sqcup \cdots \sqcup J_{m}$ into the connected components. We set 
\begin{align}\label{eq: def of mJ}
m_J \coloneqq |J_1| ! |J_2| ! \cdots |J_m| !.
\end{align}

\begin{definition} \label{defi:varpi}
For $J\subseteq[n-1]$, let
\begin{align*}
 \varpi_J \coloneqq \frac{1}{m_J} 
 e(V_J)= \frac{1}{m_J} \prod_{i\in J} \varpi_i ,
\end{align*}
where $m_J$ is defined in \eqref{eq: def of mJ}.
\end{definition}

The cohomology class $\varpi_J$ is defined to be an element of $H^{2|J|}(\Pet{n};\Q)$, but we will show that it belongs to the integral cohomology group $H^{2|J|}(\Pet{n};\Z)$.

\begin{example}
{\rm 
Let $n=9$ and $J=\{2,3,4,7,8\}$ so that $J=\{2,3,4\}\sqcup\{7,8\}$ is the decomposition into the connected components. Then we have 
\begin{align*}
 \varpi_J 
 = \frac{1}{3!2!}(\varpi_2\varpi_3\varpi_4)(\varpi_7\varpi_8)
 =\frac{1}{12}\varpi_2\varpi_3\varpi_4\varpi_7\varpi_8 .
\end{align*}
Compare this with Example~$\ref{eg: intersection}$.}
\end{example}

\begin{remark}
We also have 
\begin{align*}
e(U_J) = \prod_{i\notin J} \alpha_i 
\end{align*}
$($cf.\ Remark~$\ref{rem: decomposition into line bundles}$$)$. These decompositions of $e(U_J)$ and $e(V_J)$ can be thought as the cohomological analogue of Proposition $\ref{prop: divisor expression}$. 
\end{remark}

The main purpose of this subsection is to prove that the set of cohomology classes $\{\varpi_J \mid J\subseteq[n-1]\}$ forms a module basis of the integral cohomology group $H^*(\Pet{n};\Z)$. We will state this in Theorem~\ref{thm: Z-basis of cohomology of Peterson}, and we devote the rest of this subsection for its proof.

\begin{lemma}\label{lem: vanishing product}
For $1\le i\le n-1$, we have 
\begin{align*}
\alpha_i \varpi_i = 0 \quad \text{in $H^4(\Pet{n},\Z)$}.
\end{align*}
\end{lemma}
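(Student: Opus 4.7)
The plan is to identify $\alpha_i \varpi_i$ as the second Chern class of a rank $2$ vector bundle on $\Pet{n}$, and show that this bundle admits a nowhere-vanishing section, which forces its top Chern class to vanish.

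First I would recall from Remark~\ref{rem: decomposition into line bundles} (and the paragraph identifying $e(L_\mu)$ with the weight $\mu$) that, for $J=[n-1]\setminus\{i\}$, we have
$U_{[n-1]\setminus\{i\}} = L_{\alpha_i}$, and for $J=\{i\}$ we have $V_{\{i\}}= L_{\varpi_i}$. Hence $\alpha_i = c_1(L_{\alpha_i})$ and $\varpi_i = c_1(L_{\varpi_i})$, and by the Whitney sum formula
\begin{equation*}
\alpha_i \cdot \varpi_i = c_2(L_{\alpha_i}\oplus L_{\varpi_i}) \quad \text{in } H^4(\Pet{n};\Z).
\end{equation*}

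Next I would use Proposition~\ref{prop: zero locus}: the section $\phi_{[n-1]\setminus\{i\}}$ of $L_{\alpha_i}$ vanishes exactly along $D_i$, and the section $\psi_{\{i\}}$ of $L_{\varpi_i}$ vanishes exactly along $E_i$. Then I would form the direct sum section
\begin{equation*}
\sigma \coloneqq (\phi_{[n-1]\setminus\{i\}},\ \psi_{\{i\}}) \colon \Pet{n} \longrightarrow L_{\alpha_i}\oplus L_{\varpi_i}.
\end{equation*}
The zero locus of $\sigma$ is $D_i \cap E_i$, and by Lemma~\ref{lem: divisors}(2) this intersection is empty. Thus $\sigma$ is a nowhere-vanishing global section of $L_{\alpha_i}\oplus L_{\varpi_i}$.

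Finally, any complex vector bundle admitting a nowhere-vanishing section splits off a trivial line bundle; by the Whitney sum formula its top Chern class (equivalently, its Euler class) therefore vanishes. Applying this to $L_{\alpha_i}\oplus L_{\varpi_i}$ yields $c_2(L_{\alpha_i}\oplus L_{\varpi_i})=0$, and combined with the identification above this gives $\alpha_i\varpi_i = 0$, as desired. I do not foresee a serious obstacle: the only subtle point is that $\Pet{n}$ may be singular, but Chern classes of vector bundles and the obstruction-theoretic interpretation of the top Chern class via a nowhere-zero section are valid on arbitrary topological spaces, so singularities cause no trouble here.
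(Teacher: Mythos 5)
Your proof is correct and follows essentially the same route as the paper: both identify $\alpha_i\varpi_i$ with the Euler (top Chern) class of $L_{\alpha_i}\oplus L_{\varpi_i}$, take the combined section $(\phi_{[n-1]\setminus\{i\}},\psi_{\{i\}})$ whose zero locus is $D_i\cap E_i=\emptyset$ by Lemma~\ref{lem: divisors}(2), and conclude vanishing from the existence of a nowhere-zero section.
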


\begin{proof}
Notice that $\alpha_i \varpi_i$ is the Euler class of the rank 2 vector bundle $U_{[n-1]\setminus\{i\}}\oplus V_{\{i\}}=L_{\alpha_i}\oplus L_{\varpi_i}$ (cf.\ Remark~\ref{rem: decomposition into line bundles}). From Section~\ref{subsec: equation of X and Omega}, we have the section $\phi_{[n-1]\setminus\{i\}}+\psi_{\{i\}}$ of this bundle whose zero locus is $Z(\phi_{[n-1]\setminus\{i\}})\cap Z(\psi_{\{i\}})=D_i\cap E_i$ as we saw in the proof of Corollary~\ref{cor: cartier}.
Now, by Lemma~\ref{lem: divisors}~(2), this is the empty set. Thus, $\phi_{[n-1]\setminus\{i\}}+\psi_{\{i\}}$ on $\Pet{n}$ is a nowhere-zero section, and hence the Euler class $\alpha_i \varpi_i$ vanishes (\cite[Property 9.7]{Milnor-Stasheff}).
\end{proof}

\begin{remark}\label{rem: fundamental rel}
In \cite[Corollary~3.4]{fu-ha-ma} and \cite[Theorem 4.1]{ha-ho-ma}, the equations $\alpha_i \varpi_i = 0$ for $1\le i\le n-1$ appeared as the fundamental relations in the presentation of the cohomology ring $H^*(\Pet{n};\C)$. 
\end{remark}

For $1\le i\le n$, let $F_i$ be the $i$-th tautological vector bundle over $Fl_n$ whose fiber at $V_{\bullet}\in Fl_n$ is $V_i$. As a convention, we denote by $F_0$ the trivial sub-bundle of $F_1$ of rank~$0$. The quotient line bundle $L_i\coloneqq F_i/F_{i-1}$ is called the $i$-th tautological line bundle, and we set 
\begin{align} \label{eq:xi}
x_i \coloneqq c_1(L_i^*) \in H^2(Fl_n;\Z)
\quad (1\le i\le n),
\end{align}
where we note that $x_1+\cdots+x_n=0$. We will also denote by the same symbol the restriction of $x_i$ to $H^2(\Pet{n};\Z)$. It is well-known that for $1\le i\le n-1$, we have 
\begin{equation}\label{eq: pi and x}
\begin{split}
&\alpha_i = x_i-x_{i+1}, \\
&\varpi_i = x_1+x_2+\cdots+x_i  .
\end{split}
\end{equation}
For $1\le i<j\le n$, let
\begin{align*}
\alpha_{i,j}\coloneqq \alpha_i+\alpha_{i+1}+\cdots+\alpha_{j-1}=x_i-x_j.
\end{align*}
For a homology cycle $Z\in H_{k}(Fl_n;\Z)$ of degree $k$, the Poincar\'{e} dual of $Z$ is the (unique) cohomology class $\gamma\in H^{2d-k}(Fl_n;\Z)$ $(d=\dim_{\C}Fl_n)$ satisfying $\gamma\cap[Fl_n]=Z$. In the following lemma, we regard $Fl_{n-1}$ as a subvariety of $Fl_n$ whose flags are contained in the linear subspace of $\C^n$ generated by $e_1,e_2,\ldots,e_{n-1}$.
The claim (ii) of the following lemma seems to be well-known, but we provide a proof using Hessenberg varieties for the completeness of the paper.

\begin{lemma}\label{lem: Poincare dual}
The following hold:
\begin{itemize}
 \item[(i)] The Poincar\'e dual of $[\Pet{n}]\in H_*(Fl_n;\Z)$ is $\prod_{j-i\ge2}\alpha_{i, j}\in H^*(Fl_n;\Z)$.
 \item[(ii)] The Poincar\'e dual of $[Fl_{n-1}]\in H_*(Fl_n;\Z)$ is $\frac{1}{n}\alpha_{1,n}\alpha_{2,n}\cdots\alpha_{n-1,n}\in H^*(Fl_n;\Z)$.
\end{itemize}
\end{lemma}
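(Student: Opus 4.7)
The plan is to realize each cycle as the zero locus of a section of a vector bundle over $Fl_n$ whose rank matches the codimension, and then identify the Poincar\'e dual with that bundle's Euler class.

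For (i), I would work with the vector bundle $\mathcal{U} := {\rm GL}_n(\C)\times^B \bigl(\mathfrak{gl}_n(\C)/H_{[n-1]}\bigr)$ over $Fl_n$ together with the section $\sigma\colon gB\mapsto [g,[g^{-1}Ng]]$. Its vanishing is precisely the defining condition $g^{-1}Ng\in H_{[n-1]}$ of $\Pet{n}$, and $\rank \mathcal{U} = \binom{n-1}{2} = \codim_\C\Pet{n}$ in $Fl_n$. Since $\Pet{n}$ is irreducible of the expected dimension $n-1$, the zero locus is regular and the Poincar\'e dual of $[\Pet{n}]$ equals $e(\mathcal{U})$. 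To compute it I would pick a $B$-invariant filtration of $\mathfrak{gl}_n/H_{[n-1]}$ with one-dimensional successive quotients; the $T$-weights that appear are exactly $x_i-x_j$ for $i-j\ge 2$ (the matrix units excluded from $H_{[n-1]}$). Using the paper's convention $L_\mu = {\rm GL}_n\times^B \C_\mu^*$, so that ${\rm GL}_n\times^B \C_{x_i-x_j} = L_{-(x_i-x_j)}$ contributes $x_j-x_i$, multiplicativity of Euler classes under filtrations gives
\[
e(\mathcal{U}) = \prod_{i-j\ge 2}(x_j-x_i) = \prod_{j-i\ge 2}\alpha_{i,j}.
\]

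For (ii), I would use the section $\tau$ of the dual tautological bundle $F_{n-1}^*$ obtained as the composite $F_{n-1}\hookrightarrow F_n\to\underline{\C}$, where the second arrow is the coordinate projection $e_n\mapsto 1$, $e_i\mapsto 0$ for $i<n$. Its zero locus consists of flags with $V_{n-1}\subseteq\langle e_1,\ldots,e_{n-1}\rangle$, hence $V_{n-1}=\langle e_1,\ldots,e_{n-1}\rangle$ by dimension, which is exactly $Fl_{n-1}$. Since $\rank F_{n-1}^* = n-1 = \codim_\C Fl_{n-1}$, the Poincar\'e dual of $[Fl_{n-1}]$ is $c_{n-1}(F_{n-1}^*)$. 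Dualizing the tautological sequence $0\to F_{n-1}\to F_n\to L_n\to 0$ and using that $F_n$ is trivial, I obtain $c(F_{n-1}^*) = 1/c(L_n^*) = 1/(1+x_n)$, whence $c_{n-1}(F_{n-1}^*) = (-1)^{n-1} x_n^{n-1}$.

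The remaining step — identifying $(-1)^{n-1}x_n^{n-1}$ with $\tfrac{1}{n}\prod_{i=1}^{n-1}\alpha_{i,n}$ in $H^*(Fl_n;\Z)$ — is the one genuinely algebraic step. My plan is to start from the identity $\prod_{i=1}^n(t-x_i) = t^n$ in $H^*(Fl_n;\Z)[t]$, valid because every positive-degree elementary symmetric polynomial in $x_1,\ldots,x_n$ vanishes in $H^*(Fl_n;\Z)$. Differentiating with respect to $t$ gives $\sum_{k=1}^n \prod_{i\ne k}(t-x_i) = n t^{n-1}$; evaluating at $t=x_n$ kills every summand with $k\ne n$ (each of those contains the factor $x_n-x_n=0$), so
\[
\prod_{i=1}^{n-1}(x_n-x_i) = n\, x_n^{n-1},
\]
and multiplying by $(-1)^{n-1}$ yields the required identity. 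The main subtlety, common to both parts, is ensuring that the constructed section is regular of the expected codimension so that the ``zero locus equals Euler class'' principle applies; in both cases this follows from the a priori dimension counts for $\Pet{n}$ and $Fl_{n-1}$.
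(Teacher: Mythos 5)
Your part (i) follows essentially the same route as the paper: the paper uses the very same bundle $\mathcal{N}={\rm GL}_n(\C)\times^B(\mathfrak{gl}_n(\C)/H_{[n-1]})$ and the same section $gB\mapsto[g,[g^{-1}Ng]]$, citing \cite{ab-fu-ze} both for the identification of the Poincar\'e dual of $[\Pet{n}]$ with $e(\mathcal{N})$ and for the short-exact-sequence computation giving $\prod_{j-i\ge2}\alpha_{i,j}$; your weight-filtration calculation is that computation made explicit, and your bookkeeping of the dual convention $L_\mu={\rm GL}_n(\C)\times^B\C_{-\mu}$ is correct. Part (ii), by contrast, is a genuinely different argument: the paper deduces the class of $[Fl_{n-1}]$ indirectly from the regular semisimple Hessenberg variety $\Hess(S,h_0)$, using \cite{an-ty} for its Poincar\'e dual $\alpha_{1,n}\cdots\alpha_{n-1,n}$ and \cite{Teff11} for the fact that it has $n$ connected components each homologous to $Fl_{n-1}$, which is where the factor $\frac1n$ comes from. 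You instead realize $Fl_{n-1}$ directly as the zero locus of a section of $F_{n-1}^*$, compute $c_{n-1}(F_{n-1}^*)=(-1)^{n-1}x_n^{n-1}$ from the tautological sequence, and verify $\prod_{i=1}^{n-1}(x_n-x_i)=n\,x_n^{n-1}$ by differentiating $\prod_{i=1}^n(t-x_i)=t^n$ and evaluating at $t=x_n$; this algebra is correct, and your route is more self-contained (no regular semisimple Hessenberg varieties and no component count are needed).

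The one step that does not hold as you state it is the passage from ``zero locus of expected codimension'' to ``Poincar\'e dual equals the Euler class.'' Irreducibility plus the dimension count only gives that $e(\mathcal{U})$ is dual to the zero-\emph{scheme} cycle, i.e.\ to $m[\Pet{n}]$ for some multiplicity $m\ge1$; to conclude $m=1$ you must know the zero scheme is generically reduced, equivalently that the section is transverse at some point of $\Pet{n}$. This does not follow from the dimension count alone, and it is precisely what the paper imports from \cite[Corollary~3.9]{ab-fu-ze}; a self-contained version of (i) would need, say, a tangent-space computation at a smooth point of $\Pet{n}$. The same caveat applies to your phrase ``this follows from the a priori dimension counts'' in (ii), but there the repair is immediate: in the affine chart of unipotent lower-triangular matrices $u$ centered at the coordinate flag, your section of $F_{n-1}^*$ is given by the coordinates $(u_{n,1},\dots,u_{n,n-1})$, so it is visibly transverse along $Fl_{n-1}$, and the Euler-class identification is legitimate.
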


\begin{proof}
We first prove the claim (i). Recall from \eqref{eq: def of hJ} and \eqref{eq: HJ} that $H_J\subseteq \mathfrak{gl}_n(\C)$ for $J\subseteq[n-1]$ is the Hessenberg space corresponding to the Hessenberg function $h_J$. Consider the associated vector bundle 
\begin{align*}
\mathcal{N}\coloneqq {\rm GL}_n(\C)\times^B (\mathfrak{gl}_n(\C)/H_{[n-1]})
\end{align*}
over $Fl_n={\rm GL}_n(\C)/B$. By an argument similar to that used in Section~\ref{subsec: equation of X and Omega}, $\Pet{n}$ can be written as the zero locus of a section of the vector bundle $\mathcal{N}$, and it is shown in \cite[Corollary~3.9]{ab-fu-ze} that the Poincar\'e dual of $[\Pet{n}]$ is the Euler class $e(\mathcal{N})\in H^*(Fl_n;\Z)$. It is straightforward to verify that $e(\mathcal{N})=\prod_{j-i\ge2}\alpha_{i, j}$ by the same inductive argument as that in \cite[Sect.\ 4]{ab-fu-ze} using short exact sequences of vector bundles.

Next we prove the claim (ii). Let $S$ be an $n\times n$ regular semisimple matrix (i.e.\ a diagonal matrix with distinct eigenvalues) and $\Hess(S,h_0)$ a regular semisimple Hessenberg variety, where $h_0$ is a Hessenberg function $h_0\colon [n]\rightarrow [n]$ given by
\begin{align*}
h_0(i) \coloneqq
\begin{cases}
n-1 \quad &(1\le i\le n-1) \\
n &(i=n).
\end{cases}
\end{align*}
It is shown in \cite[Sect,\ 3 and Sect.\ 4]{an-ty} that the Poincar\'{e} dual of $[\Hess(S,h_0)]$ is 
\begin{align*}
(x_1-x_n)(x_2-x_n)\cdots (x_{n-1}-x_n)
\in H^*(Fl_n;\Z),
\end{align*}
where the left hand side is equal to $\alpha_{1,n}\alpha_{2,n}\cdots\alpha_{n-1,n}$ by the  definition of $\alpha_{i,j}$. It is also known that $\Hess(S,h_0)$ has $n$ connected components and that all the connected components give the same cycle $[Fl_{n-1}]$ (\cite[Sect.\ 3]{Teff11}). Thus the Poincar\'e dual of $n[Fl_{n-1}]$ is 
$\alpha_{1,n}\alpha_{2,n}\cdots\alpha_{n-1,n}$, which implies the claim (ii).
\end{proof}

\begin{remark}
{\rm
\cite[Corollary~7.2]{an-ty} with the formula for the double Schubert polynomial in \cite[p.2613]{an-ty} gave a more general formula than that of Lemma~\ref{lem: Poincare dual} (i) for regular nilpotent Hessenberg \textit{schemes} which were not known to be reduced when it was published. After that, \cite{ab-de-ga-ha} proved that they are in fact reduced when they contain $\Pet{n}$ (cf.\ \cite[Remark~3.8]{ab-de-ga-ha}), and the formula are now generalized in \cite{ab-fu-ze} for an arbitrary Lie type.
}
\end{remark}

\vspace{10pt}

For an (irreducible) projective variety $Y$, we denote the fundamental cycle of $Y$ as $[Y]\in H_{2d}(Y;\Z)$, where $d=\dim_{\C} Y$. For a cohomology class $\beta\in H^{2d}(Y;\Z)$, we write
\begin{align*}
\int_Y \beta \coloneqq \langle [Y] , \beta \rangle_{Y} \quad (\in\Z), 
\end{align*} 
where the right hand side is the value of the standard paring
\begin{align*}
\langle \ , \ \rangle_{Y} \colon  H_{2d}(\Pet{n};\Z) \times H^{2d}(\Pet{n};\Z) \rightarrow \Z.
\end{align*}

\begin{proposition}\label{lem: Masuda lemma in fund}
We have
\begin{align*}
\int_{\Pet{n}}\varpi_1\varpi_2\cdots \varpi_{n-1} = (n-1)! .
\end{align*}
\end{proposition}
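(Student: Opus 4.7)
Since each line bundle $L_{\varpi_i}$ on $\Pet{n}$ admits the section $\psi_{\{i\}}$ whose zero locus is $E_i$ (Proposition~\ref{prop: zero locus}, Corollary~\ref{cor: cartier}), the Euler class $\varpi_i = e(L_{\varpi_i})$ coincides with the divisor class $[E_i]$ in $H^2(\Pet{n};\Z)$. Therefore the top-degree product $\varpi_1 \cdots \varpi_{n-1}$ is Poincar\'e dual to the scheme-theoretic intersection $\bigcap_{i=1}^{n-1} E_i$, which by Proposition~\ref{prop: divisor expression} is set-theoretically $\Omega_{[n-1]} = \{w_0\}$. Hence
\[
\int_{\Pet{n}} \varpi_1 \cdots \varpi_{n-1} \;=\; \dim_{\C}\bigl(\mathcal{O}_{\Pet{n}, w_0}/(\psi_{\{1\}}, \ldots, \psi_{\{n-1\}})\bigr),
\]
namely the local intersection multiplicity of the divisors $E_1, \ldots, E_{n-1}$ at $w_0$.

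The plan is to compute this length in the affine chart $X_{w_0}^{\circ}\cap \Pet{n}\cong \C^{n-1}$ about $w_0$ (on which $\Pet{n}$ is smooth). I would parametrize the opposite Bruhat cell $X_{w_0}^{\circ}\subseteq Fl_n$ by $g = u\cdot w_0$ with $u = I + \sum_{1\le i<j\le n} y_{ij} E_{ij}$ in the unipotent radical. A direct calculation (which I verified by hand for $n=3,4$) shows that the Peterson conditions $N V_i \subseteq V_{i+1}$ are equivalent to the anti-diagonal identifications $y_{i,i+k} = y_{1,1+k}$ for all admissible $i,k$, leaving $n-1$ free local parameters $y_k := y_{1,1+k}$. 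In these coordinates, $g$ takes the Hankel form $g_{i,j} = y_{n+1-i-j}$ with the conventions $y_0 = 1$ and $y_k = 0$ for $k<0$, so the local defining function $\det_i(g)$ of $E_i$ becomes the $i\times i$ leading principal minor of this Hankel matrix.

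The heart of the proof is then to show
\[
\dim_{\C}\C[y_1,\ldots,y_{n-1}]/(\det_1,\det_2,\ldots,\det_{n-1}) \;=\; (n-1)!.
\]
Small cases are easy: for $n=3$ the ideal is $(y_2,\, y_1^2)$, of dimension $2 = 2!$; for $n=4$, eliminating $y_3$ via $\det_1=y_3=0$ reduces to $(y_2^2,\, y_1^3 - 2y_1y_2)$ in $\C[y_1,y_2]$, a Gr\"obner basis computation for which gives dimension $6 = 3!$. The main obstacle is establishing this length formula for all $n$. I would attempt this either by induction on $n$ exploiting the Laplace expansion of the Hankel leading minors along the bottom row (which interacts cleanly with the sparse entries $y_0=1$ and $y_{<0}=0$), or by identifying the Artinian quotient with a known algebra of $\C$-dimension $(n-1)!$, such as a suitable deformation of the coinvariant algebra of $\mathfrak{S}_{n-1}$ naturally acting on the Hankel moment coordinates. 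As a sanity check and alternative route, one can also push forward to $Fl_n$: by Lemma~\ref{lem: Poincare dual}(i) the integral equals $\int_{Fl_n}\varpi_1\cdots\varpi_{n-1}\cdot\prod_{j-i\ge 2}(x_i-x_j)$, and expanding $\prod_k(x_1+\cdots+x_k)$ into the $(n-1)!$ monomials indexed by inversion tables provides a second route to the answer that can cross-check the local computation.
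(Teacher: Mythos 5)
Your reduction of the integral to a local intersection multiplicity at $w_0$ is a reasonable start: indeed $\varpi_i=e(L_{\varpi_i})$ has the section $\psi_{\{i\}}$ with zero locus $E_i$, and $\bigcap_{i\in[n-1]}E_i=\Omega_{[n-1]}=\{w_0\}$, so the top product localizes at the single point $w_0$. (Two points you should still record: equating $\int_{\Pet{n}}\varpi_1\cdots\varpi_{n-1}$ with the length of $\mathcal{O}_{\Pet{n},w_0}/(\psi_{\{1\}},\ldots,\psi_{\{n-1\}})$ needs $\Pet{n}$ to be Cohen--Macaulay at $w_0$ --- true because $\Pet{n}$ is a local complete intersection in $Fl_n$, being the zero locus of a section of $\mathcal{N}$ of the right rank, but this must be invoked; and your Hankel-form parametrization of the chart $X^{\circ}_{w_0}\cap\Pet{n}\cong\C^{n-1}$ is only asserted after checking $n=3,4$, so it too needs a proof for general $n$.) The genuine gap, however, is the step you yourself flag as the heart of the argument: the length formula
\begin{align*}
\dim_{\C}\C[y_1,\ldots,y_{n-1}]/(\det\nolimits_1,\ldots,\det\nolimits_{n-1})=(n-1)!
\end{align*}
is verified only for $n=3,4$, and for general $n$ you offer two possible strategies (induction via Laplace expansion, or identification with a known Artinian algebra) without carrying either out. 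As written, the proof is incomplete precisely at the point where all the work lies; the proposed fallback of pushing forward to $Fl_n$ and expanding $\prod_k(x_1+\cdots+x_k)$ against $\prod_{j-i\ge 2}(x_i-x_j)$ is likewise only a sketch, since one still has to evaluate each of the resulting integrals over $Fl_n$.

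For comparison, the paper avoids any local algebra: it proves the recursion $\int_{\Pet{n}}\varpi_1\cdots\varpi_{n-1}=(n-1)\int_{\Pet{n-1}}\varpi_1\cdots\varpi_{n-2}$ by writing $\varpi_{n-1}=\frac{1}{n}\sum_i i\alpha_i$, killing all terms but $\frac{n-1}{n}\alpha_{n-1}$ via the relations $\varpi_i\alpha_i=0$ (Lemma~\ref{lem: vanishing product}), and then trading Poincar\'e duals twice (Lemma~\ref{lem: Poincare dual}: $[\Pet{n}]$ dual to $\prod_{j-i\ge2}\alpha_{i,j}$, and $[Fl_{n-1}]$ dual to $\frac{1}{n}\alpha_{1,n}\cdots\alpha_{n-1,n}$) to land in $\Pet{n-1}\subseteq Fl_{n-1}$; the base case is $\Pet{2}=\P^1$. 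If you want to salvage your route, you would need to supply a complete proof of the Artinian length formula (e.g.\ by an induction that mirrors the paper's recursion, eliminating the variable cut out by $\det_{n-1}$ and showing the length drops by the factor $n-1$), at which point the two arguments become essentially parallel, with yours phrased in local coordinates instead of cohomology classes.
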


\begin{proof}
Let us first prove that
\begin{align}\label{eq: basis in fundamental induction}
\int_{\Pet{n}}\varpi_1\varpi_2\cdots \varpi_{n-1} 
= (n-1) \int_{\Pet{n-1}}\varpi_1\varpi_2\cdots \varpi_{n-2}.
\end{align}
For the $\varpi_{n-1}$ in the left hand side of \eqref{eq: basis in fundamental induction}, notice that
\begin{align*}
\varpi_{n-1} = \frac{1}{n}\sum_{i=1}^{n-1} i\alpha_i ,
\end{align*}
which follows from \eqref{eq: pi and x}. Since we have $\varpi_i\alpha_i=0$ for $1\le i\le n-1$ from Lemma~\ref{lem: vanishing product}, we see that
\begin{align}\label{eq: pi->alpha}
\int_{\Pet{n}}\varpi_1\varpi_2\cdots \varpi_{n-1}
= \frac{n-1}{n}\int_{\Pet{n}}\varpi_1\varpi_2\cdots \varpi_{n-2}\alpha_{n-1}.
\end{align}
By Lemma~\ref{lem: Poincare dual}~(i), the right hand side of \eqref{eq: pi->alpha} can be computed as the following integral over $Fl_n$:
\begin{align*}
\frac{n-1}{n}\int_{Fl_n} \varpi_1\varpi_2\cdots \varpi_{n-2}\alpha_{n-1} \prod_{j-i\ge2}\alpha_{i,j}.
\end{align*}
Since we have $\alpha_{n-1}=\alpha_{n-1,n}$, the last expression can be written as
\begin{align*}
\frac{n-1}{n}\int_{Fl_n} \varpi_1\varpi_2\cdots \varpi_{n-2} (\alpha_{1,n}\alpha_{2,n}\cdots\alpha_{n-1,n})\prod_{\substack{j-i\ge2\\j\ne n}}\alpha_{i, j}.
\end{align*}
By Lemma~\ref{lem: Poincare dual}~(ii), we can rewrite this as the following integral over $Fl_{n-1}$:
\begin{align*}
(n-1)\int_{Fl_{n-1}} \varpi_1\varpi_2\cdots \varpi_{n-2} \prod_{\substack{j-i\ge2\\j\ne n}}\alpha_{i, j}.
\end{align*}
Applying Lemma~\ref{lem: Poincare dual}~(i) to $\Pet{n-1}\subseteq Fl_{n-1}$, this can be written as the following integral over $\Pet{n-1}$:
\begin{align*}
(n-1)\int_{\Pet{n-1}} \varpi_1\varpi_2\cdots \varpi_{n-2} .
\end{align*}
Hence we proved \eqref{eq: basis in fundamental induction}.

Now using \eqref{eq: basis in fundamental induction} repeatedly, we obtain that
\begin{align*}
\int_{\Pet{n}}\varpi_1\varpi_2\cdots \varpi_{n-1}
= (n-1)! \int_{\Pet{2}}\varpi_1 .
\end{align*}
Noticing that $\Pet{2}=Fl_2=\P^1$, we see that $\varpi_1(=x_1)$ is the first Chern class of the dual of the standard tautological line bundle over $\P^1$ by \eqref{eq:xi}. Thus the integral in the right hand side is equal to $1$, which completes the proof.
\end{proof}

\vspace{10pt}

\begin{lemma}\label{lem: geometry of mJ}
For $J\subseteq[n-1]$, we have
\begin{align*}
\int_{X_{J}} e(V_J) = m_J , 
\end{align*}
where $m_J=|J_1| ! |J_2| ! \cdots |J_m| !$ is defined in \eqref{eq: def of mJ}.
\end{lemma}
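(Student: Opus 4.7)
The strategy is to exploit the product decomposition $X_J \cong \prod_{k=1}^{m} \Pet{n_k}$ from Corollary~\ref{lem: decomp of PetJ}, reducing the integral to a product of integrals on smaller Peterson varieties, each of which is evaluated by Proposition~\ref{lem: Masuda lemma in fund}.

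First I would analyze how the line bundles $L_{\varpi_i}$ for $i \in J$ restrict under this decomposition. Writing $J_k = \{a_k, a_k+1, \ldots, a_k + |J_k| - 1\}$ and using Proposition~\ref{prop: XJ and Hess} to view $X_J = \Hess(N, h_J)$, the key observation is that for every gap position $l \notin J$, the Hessenberg condition forces $N V_l \subseteq V_l$ on $X_J$; since the only $N$-invariant subspace of $\C^n$ of dimension $l$ is $\langle e_1, \ldots, e_l\rangle$ (as $N$ is regular nilpotent), we conclude that $V_l = \langle e_1, \ldots, e_l \rangle$ is fixed on $X_J$. Since consecutive blocks are separated by at least one gap, $a_k - 1 \notin J$ for every $k$, so $V_{a_k - 1}$ is fixed on $X_J$. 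Consequently, for each $i = a_k + j - 1 \in J_k$ with $1 \le j \le n_k - 1$, we can write $V_i = V_{a_k - 1} \oplus W_j^{(k)}$ where $W_j^{(k)}$ depends only on the $k$-th factor, and hence $L_{\varpi_i}|_{X_J} \cong \det(V_i)^* \cong \det(W_j^{(k)})^*$ is the pullback $\pi_k^*(L_{\varpi'_j})$ of the line bundle on $\Pet{n_k}$ whose first Chern class is the $j$-th fundamental weight $\varpi'_j$ on $\Pet{n_k}$, where $\pi_k \colon X_J \to \Pet{n_k}$ denotes the projection.

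Given this compatibility, we obtain $e(V_J)|_{X_J} = \prod_{k=1}^{m} \pi_k^*\bigl(\varpi'_1 \varpi'_2 \cdots \varpi'_{n_k - 1}\bigr)$, and the Kunneth formula together with the product structure of integration over $\prod_k \Pet{n_k}$ yields
\[
\int_{X_J} e(V_J) = \prod_{k=1}^{m} \int_{\Pet{n_k}} \varpi'_1 \varpi'_2 \cdots \varpi'_{n_k - 1} = \prod_{k=1}^{m} (n_k - 1)! = \prod_{k=1}^{m} |J_k|! = m_J,
\]
where the middle equality applies Proposition~\ref{lem: Masuda lemma in fund} to each factor. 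The main obstacle is the compatibility claim in the previous paragraph---identifying $L_{\varpi_i}|_{X_J}$ as the pullback of a line bundle from a single factor of the product decomposition---which rests on the fixed-subspace observation derived from the classification of invariant subspaces of a regular nilpotent operator; once this is in hand, Kunneth together with the base case Proposition~\ref{lem: Masuda lemma in fund} finishes the argument routinely.
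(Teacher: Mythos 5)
Your proposal is correct and takes essentially the same route as the paper: both reduce the integral via the product decomposition $X_J \cong \prod_{k=1}^{m}\Pet{n_k}$ of Corollary~\ref{lem: decomp of PetJ} and then evaluate each factor with Proposition~\ref{lem: Masuda lemma in fund}, giving $\prod_k |J_k|! = m_J$. The only difference is that you supply the details (constancy of $V_l$ for $l\notin J$ on $X_J$, hence $L_{\varpi_i}|_{X_J}\cong \det(V_i/V_{a_k-1})^*$ pulled back from the $k$-th factor) behind the compatibility of $e(V_J)$ with the decomposition, which the paper's proof asserts without elaboration.
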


\begin{proof}
The isomorphism given in Corollary~\ref{lem: decomp of PetJ} induces an isomorphism
\begin{align*}
H^*(X_J;\Z) \stackrel{\cong}{\rightarrow} \bigotimes_{k=1}^{m} H^*(\Pet{n_k};\Z)
\end{align*}
which sends
$e(V_J)(=\prod_{i\in J}\varpi_i) \in H^{2|J|}(X_{J};\Z)$ to $\bigotimes_{k=1}^{m} \varpi_1\varpi_2\cdots \varpi_{|J_k|}$. It also induces an isomorphism
\begin{align*}
H_*(X_J;\Z) \stackrel{\cong}{\rightarrow} \bigotimes_{k=1}^{m} H_*(\Pet{n_k};\Z)
\end{align*}
which sends $[X_{J}]\in H_{2|J|}(X_J;\Z)$ to $\bigotimes_{k=1}^{m}[\Pet{n_k}]$. Now the claim follows from Proposition~\ref{lem: Masuda lemma in fund}.
\end{proof}

\vspace{10pt}

\begin{proposition}\label{prop: duality}
For $J,K\subseteq [n-1]$ such that $|J|=|K|$, the degree of the homology class $[X_{J}]$ is the same as the degree of the Euler class $e(V_K)$, 
and we have
\begin{align*} 
\langle [X_{J}] , e(V_K) \rangle_{\Pet{n}} 
=
\begin{cases}
m_J \quad &\text{if $J=K$}, \\
0 &\text{if $J\ne K$}.
\end{cases}
\end{align*}
\end{proposition}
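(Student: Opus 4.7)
\medskip

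The plan is to reduce the pairing on $\Pet{n}$ to an integral over $X_J$ via the projection formula, and then exploit the sectional description of $\Omega_K$ from Section 3.2 together with the intersection result in Proposition \ref{prop: point intersection 2}.

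First I would record the degree statement. Since $V_K$ has rank $|K|$ by \eqref{eq: rank of U and J}, its Euler class lies in $H^{2|K|}(\Pet{n};\Z)$, and $[X_J] \in H_{2|J|}(\Pet{n};\Z)$ by Proposition \ref{prop: homology basis}, so the pairing makes sense precisely when $|J|=|K|$. Next, using the inclusion $i\colon X_J \hookrightarrow \Pet{n}$, the projection formula gives
\begin{align*}
\langle [X_J], e(V_K) \rangle_{\Pet{n}} = \langle i_*[X_J], e(V_K) \rangle_{\Pet{n}} = \int_{X_J} i^* e(V_K) = \int_{X_J} e(V_K|_{X_J}).
\end{align*}

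For the case $J=K$, this is exactly the integral computed in Lemma \ref{lem: geometry of mJ}, giving $m_J$. For the case $J\neq K$ with $|J|=|K|$, the idea is to show that $V_K|_{X_J}$ admits a nowhere-vanishing section, which forces its Euler class to vanish by \cite[Property 9.7]{Milnor-Stasheff} (compare the argument for Lemma \ref{lem: vanishing product}). The candidate section is $\psi_K|_{X_J}$. By Proposition \ref{prop: zero locus} its zero locus in $\Pet{n}$ is $\Omega_K$, so its zero locus on $X_J$ is $X_J \cap \Omega_K$. By Proposition \ref{prop: point intersection 2}, $X_J \cap \Omega_K \neq \emptyset$ if and only if $J \supseteq K$; but $|J|=|K|$ and $J\neq K$ rules this out, so $\psi_K|_{X_J}$ is nowhere zero. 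Since $\rank V_K = |K| = |J| = \dim_{\C} X_J$, a nowhere-zero section of a rank-$\dim X_J$ vector bundle on $X_J$ forces $e(V_K|_{X_J})=0$, completing the computation.

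The step I would watch most carefully is the application of the nowhere-zero section criterion: it requires $X_J$ to be a decent space on which to apply the standard Euler class vanishing, but this is fine since by Corollary \ref{lem: decomp of PetJ} each $X_J$ is a product of Peterson varieties (projective varieties), so the integral makes sense and the Milnor--Stasheff criterion applies to $e(V_K|_{X_J})$ viewed in $H^{2|J|}(X_J;\Z)$. Everything else is a straightforward bookkeeping of degrees plus the projection formula.
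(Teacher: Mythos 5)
Your proposal is correct and follows essentially the same route as the paper: reduce the pairing to $\int_{X_J} e(V_K)$, use Lemma \ref{lem: geometry of mJ} for $J=K$, and for $J\ne K$ observe that $|J|=|K|$ forces $J\not\supseteq K$, so by Propositions \ref{prop: zero locus} and \ref{prop: point intersection 2} the section $\psi_K$ is nowhere zero on $X_J$ and the Euler class vanishes. The only cosmetic difference is your remark about $\rank V_K$ equaling $\dim_{\C}X_J$, which is not needed—the nowhere-zero section alone kills the Euler class.
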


\begin{proof}
Note that we have 
\begin{equation}
\begin{split}\label{eq: XI and omegaJ}
\langle [X_{J}] , e(V_K) \rangle_{\Pet{n}} 
= \int_{X_{J}}  e(V_K).
\end{split}
\end{equation}
For the case $J=K$, the claim follows from the previous lemma. Let us consider the case $J\ne K$. This condition and $|J|= |K|$ imply that $J\not\supseteq K$. 
Recall from Proposition~\ref{prop: zero locus} that we have the section $\psi_{K}\colon \Pet{n} \rightarrow V_{K}$ such that $Z(\psi_K)=\Omega_K$. Thus, Proposition~\ref{prop: point intersection 2} implies that the vector bundle $V_{K}$ restricted on $X_K$ admits a nowhere-zero section (given by $\psi_K$). Thus the Euler class $e(V_K)$ vanishes on $X_J$, and hence the right hand side of \eqref{eq: XI and omegaJ} is equal to $0$ in this case.
\end{proof}

\vspace{10pt}

For $J\subseteq [n-1]$, recall from Definition~\ref{defi:varpi} that
\begin{align*}
\varpi_J 
= \frac{1}{m_J} e(V_J)
= \frac{1}{m_J} \prod_{i\in J}\varpi_i .
\end{align*} 

\begin{theorem}\label{thm: Z-basis of cohomology of Peterson}
For each $J\subseteq [n-1]$, the cohomology class $\varpi_J$ is an element of the integral cohomology $H^{2|J|}(\Pet{n};\Z)$, and the set
\begin{align*}
\{ \varpi_J \in H^*(\Pet{n};\Z) \mid J\subseteq [n-1]\}
\end{align*}
is a $\Z$-basis of $H^*(\Pet{n};\Z)$.
\end{theorem}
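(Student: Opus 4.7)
The plan is to use Proposition~\ref{prop: duality} to show that the collection $\{\varpi_J\}$ is the Kronecker dual of the homology $\Z$-basis $\{[X_J]\}$ from Proposition~\ref{prop: homology basis}, and then to deduce integrality and the basis property from the perfectness of the Kronecker pairing. A priori $\varpi_J=\tfrac{1}{m_J}e(V_J)$ only lives in $H^*(\Pet{n};\Q)$, so integrality must be extracted from the pairing computation rather than assumed.

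First I would record the duality computation. For $J,K\subseteq[n-1]$, the Kronecker pairing $\langle[X_J],\varpi_K\rangle_{\Pet{n}}$ vanishes whenever $|J|\neq|K|$ for degree reasons. When $|J|=|K|$, Proposition~\ref{prop: duality} gives
\begin{equation*}
\langle[X_J],\varpi_K\rangle_{\Pet{n}}
=\frac{1}{m_K}\langle[X_J],e(V_K)\rangle_{\Pet{n}}
=\frac{1}{m_K}\cdot m_J\,\delta_{JK}
=\delta_{JK}.
\end{equation*}
So along each cohomological degree $2d$, the rational classes $\{\varpi_J:|J|=d\}$ pair to the identity matrix against the $\Z$-basis $\{[X_J]:|J|=d\}$ of $H_{2d}(\Pet{n};\Z)$.

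Next I would invoke the universal coefficient theorem. Since Proposition~\ref{prop: homology basis} tells us that $H_*(\Pet{n};\Z)$ is free abelian, UCT yields an isomorphism $H^*(\Pet{n};\Z)\cong \Hom_{\Z}(H_*(\Pet{n};\Z),\Z)$ implemented precisely by the Kronecker pairing; in particular $H^*(\Pet{n};\Z)$ is torsion-free and the pairing $H^{2d}(\Pet{n};\Z)\times H_{2d}(\Pet{n};\Z)\to\Z$ is perfect. Let $\widetilde\varpi_J\in H^{2|J|}(\Pet{n};\Z)$ denote the unique integral class dual to $[X_J]$ under this pairing. The same class $\widetilde\varpi_J$, viewed in $H^*(\Pet{n};\Q)$, also pairs to $\delta_{JK}$ with every $[X_K]$; since $\{[X_K]\}$ is a $\Q$-basis of $H_*(\Pet{n};\Q)$, we must have $\widetilde\varpi_J=\varpi_J$ in $H^*(\Pet{n};\Q)$. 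Therefore $\varpi_J\in H^*(\Pet{n};\Z)$, as needed.

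Finally, the basis property is automatic: $\{\widetilde\varpi_J\}$ is the dual $\Z$-basis of $\{[X_J]\}$ under the perfect pairing, and we have identified $\varpi_J=\widetilde\varpi_J$, so $\{\varpi_J\mid J\subseteq[n-1]\}$ is a $\Z$-basis of $H^*(\Pet{n};\Z)$. The real content of the argument is Proposition~\ref{prop: duality} (which in turn rests on Lemma~\ref{lem: geometry of mJ} and on the vanishing built from Proposition~\ref{prop: point intersection 2} together with the section $\psi_K$ of $V_K$); once the duality matrix is the identity, the remaining deduction is pure homological algebra, so the main obstacle is indeed getting the pairing values exactly right, including the normalization by $m_J=|J_1|!\cdots|J_m|!$ that arises from the product decomposition $X_J\cong\prod_k\Pet{n_k}$ in Corollary~\ref{lem: decomp of PetJ}.
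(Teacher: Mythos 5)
Your proposal is correct and follows essentially the same route as the paper: the paper's proof also combines Proposition~\ref{prop: duality} with the freeness of $H_*(\Pet{n};\Z)$ from Proposition~\ref{prop: homology basis} and the resulting perfect Kronecker pairing, concluding that the $\varpi_J$ are the integral dual basis of the $[X_J]$. Your write-up merely makes explicit (via UCT and the intermediate classes $\widetilde\varpi_J$) the steps the paper leaves to the reader.
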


\begin{proof}
Recall from Proposition~\ref{prop: homology basis} that $\{[X_{J}] \mid J\subseteq[n-1]\}$ forms a $\Z$-basis of $H_*(\Pet{n};\Z)$. Since the paring between $H_*(\Pet{n};\Z)$ and $H^*(\Pet{n};\Z)$ is perfect, the previous proposition implies the desired claim.
\end{proof}

\vspace{5pt}

\begin{example}
{\rm 
Let $n=4$ so that $[n-1]=\{1,2,3\}$. The additive basis given in Theorem~\ref{thm: Z-basis of cohomology of Peterson} is
\begin{align*}
H^*(\Pet{n};\Z) &= \Z\varpi_{\emptyset}\oplus(\Z\varpi_{\{1\}}\oplus\Z\varpi_{\{2\}}\oplus\Z\varpi_{\{3\}})\\
&\hspace{75pt}\oplus(\Z\varpi_{\{1,2\}}\oplus\Z\varpi_{\{1,3\}}\oplus\Z\varpi_{\{2,3\}})\oplus\Z\varpi_{\{1,2,3\}} .
\end{align*}
As we saw in Proposition $\ref{prop: duality}$, this is the dual basis of the basis of the homology group $H_*(\Pet{n};\Z)$ given in Example~$\ref{ex: homology basis}$.}
\end{example}

\vspace{10pt}

\subsection{Structure constants and their positivity}\label{subsec: positivity}
By Theorem~\ref{thm: Z-basis of cohomology of Peterson}, we can study the cohomology ring $H^*(\Pet{n};\Z)$ in terms of the basis $\{\varpi_J\}_{J\subseteq[n-1]}$. Specifically, we expand the product of two classes $\varpi_J$ and $\varpi_K$ as a linear combination of the basis:
\begin{align} \label{eq:Schubert_calculus_for_varpi_Pet} 
\varpi_J \cdot \varpi_K =\sum_{L \subseteq [n-1]} \d_{JK}^L \, \varpi_L, \ \ \ \d_{JK}^L \in \Z.
\end{align}
The coefficients $\d_{JK}^L$ are called the structure constant for the basis $\{\varpi_J\}_{J\subseteq[n-1]}$. In the following, we explain a geometric interpretation of $\d_{JK}^L$, and deduce  their positivity. Note that the degree of $\varpi_L$ in $H^*(\Pet{n};\Z)$ is $2|L|$ and that the degree of $\varpi_J \cdot \varpi_K$ in $H^*(\Pet{n};\Z)$ is $2(|J|+|K|)$. Thus we may assume that 
\begin{align} \label{eq: degree of piL}
|L|=|J|+|K|
\end{align}
for each summand of \eqref{eq:Schubert_calculus_for_varpi_Pet} since we have $\d_{JK}^L =0$ otherwise. Then by Proposition~\ref{prop: duality}, we have
\begin{align}\label{eq: djkl in terms of pairing}
\d_{JK}^L 
&= \langle [X_{L}] , \varpi_{J} \cdot \varpi_{K} \rangle_{\Pet{n}} 
= \frac{1}{m_J}
\frac{1}{m_K}\int_{X_L} \left(\prod_{j\in J}\varpi_j\right)\left(\prod_{k\in K}\varpi_k\right) , 
\end{align}
where $m_J$ and $m_K$ are the positive integers defined in \eqref{eq: def of mJ}. Now recall that each $\varpi_i\in H^2(X_L;\Z)$ is the Euler class of the line bundle $V_{\{i\}}$ corresponding to the divisor $E_i(=Z(\psi_{\{i\}}))$ on $\Pet{n}$. Hence it follows from \eqref{eq: djkl in terms of pairing} that the structure constant $\d_{JK}^L$ computes an intersection number of (possibly duplicate) divisors $E_i\cap X_L$'s on $X_L$ up to a constant multiple given by $\frac{1}{m_J}\frac{1}{m_K}$ (cf.\ \cite[Sect.\ 1.1.C]{Lazarsfeld}). This provides a geometric interpretation of $\d_{JK}^L$ in \eqref{eq:Schubert_calculus_for_varpi_Pet}, and it leads us to the following positivity.

\begin{proposition}\label{prop: positivity}
We have $\d_{JK}^L \ge0$ for all $J,K,L\subseteq [n-1]$.
\end{proposition}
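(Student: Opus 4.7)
The plan is to use the formula
\[
d_{JK}^L=\frac{1}{m_Jm_K}\int_{X_L}\Bigl(\prod_{j\in J}\varpi_j\Bigr)\Bigl(\prod_{k\in K}\varpi_k\Bigr)\qquad (|L|=|J|+|K|)
\]
derived immediately above the statement, and to deduce non-negativity of the integral from a standard intersection-theoretic positivity principle. Since $m_J,m_K>0$, this will give $d_{JK}^L\ge 0$.

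I would first reduce to the case $J,K\subseteq L$. Suppose some $j\in J$ lies outside $L$. Then Proposition~\ref{prop: point intersection 2} yields $E_j\cap X_L=\Omega_{\{j\}}\cap X_L=\emptyset$, so the section $\psi_{\{j\}}$ of $L_{\varpi_j}$ from Proposition~\ref{prop: zero locus} restricts to a nowhere-vanishing section over $X_L$. Consequently $L_{\varpi_j}|_{X_L}$ is trivial, $\varpi_j|_{X_L}=0$, and the whole integral vanishes; the case $K\not\subseteq L$ is symmetric.

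When $J,K\subseteq L$, the integrand is the top Chern class of the rank-$|L|$ vector bundle $(V_J\oplus V_K)|_{X_L}$, which by Remark~\ref{rem: decomposition into line bundles} splits as a direct sum of line bundles of the form $L_{\varpi_i}|_{X_L}$ (with multiplicity). Each $L_{\varpi_i}$ is the pullback of $\mathcal O(1)$ from the Grassmannian $Gr(i,n)$ along the forgetful morphism $Fl_n\twoheadrightarrow Gr(i,n)$, so it is globally generated on $Fl_n$ and therefore on $X_L$. The integral thus computes an intersection number of $|L|$ effective divisor classes, each coming from a base-point-free linear system on $X_L$. By a Bertini-type argument applied to the morphism from $X_L$ into the product of projective spaces given by the complete linear systems, I would arrange generic sections whose joint zero scheme on $X_L$ has the expected dimension $|L|-|L|=0$; the integral then equals the length of this finite scheme, which is a non-negative integer (cf.\ \cite[Sect.\ 1.1.C]{Lazarsfeld}).

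The main obstacle will be this Bertini step, since $X_L$ is generally singular and carries no transitive group action, so Kleiman's transversality theorem does not apply directly. The cleanest way around this is to use the product decomposition $X_L\cong\prod_\ell\Pet{n_\ell}$ of Corollary~\ref{lem: decomp of PetJ} together with the K\"unneth formula to factor the integral along the Peterson factors, thereby reducing to moving-lemma arguments inside each ambient $Fl_{n_\ell}$ where a transitive group action is available. Alternatively, Fulton's refined intersection theory \cite[Chap.~14]{Fulton} identifies the integral with the degree of the localized top Chern class of $(V_J\oplus V_K)|_{X_L}$ supported on $\Omega_J\cap\Omega_K\cap X_L$; this localized class is represented by an effective $0$-cycle (since $(V_J\oplus V_K)|_{X_L}$ comes equipped with the canonical global section $(\psi_J,\psi_K)|_{X_L}$), whose degree is automatically non-negative.
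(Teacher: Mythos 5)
Your starting point is the paper's: formula \eqref{eq: djkl in terms of pairing} plus the observation that each $L_{\varpi_i}$ is globally generated (hence nef) on $Fl_n$, and therefore on $X_L$. Where you diverge is the final positivity step, and the obstacle you worry about is not actually there: the statement the paper invokes is Kleiman's theorem that an intersection number of nef classes on a \emph{complete, possibly singular} variety is non-negative (\cite[Example 1.4.16]{Lazarsfeld}). It needs no smoothness, no transitive group action, and no Bertini or transversality argument, so it applies verbatim to $X_L$ and finishes the proof in one line. (Even your Bertini concern is overstated: base-point-freeness alone lets you pick successive members of the linear systems meeting the previous intersection properly, and proper intersection of an effective cycle with an effective Cartier divisor gives an effective cycle, so no group action is needed.) Your first fallback, K\"unneth along $X_L\cong\prod_k \Pet{n_k}$ plus Kleiman transversality in each ambient $Fl_{n_k}$, does work, at the cost of verifying how the bundles $L_{\varpi_i}|_{X_L}$ match the factors (the same compatibility the paper uses in Lemma~\ref{lem: geometry of mJ}); the preliminary reduction to $J,K\subseteq L$ is correct (it mirrors the argument in Proposition~\ref{prop: duality}) though unnecessary for the nef argument.

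The second fallback, as justified, has a genuine gap: the existence of a canonical section does not make the localized top Chern class effective. Here the zero scheme of $(\psi_J,\psi_K)|_{X_L}$ is $\Omega_{J\cup K}\cap X_L$, which has excess dimension (at least $|J\cap K|>0$) whenever $J\cap K\neq\emptyset$, and in excess dimension localized classes of bundles with sections can be negative: on $\P^2$, the bundle $\mathcal{O}(1)\oplus\mathcal{O}(-1)$ with section $(s,0)$, $s$ cutting out a line, has zero locus that line and localized top Chern class of degree $c_2=-1$. What rescues this route is precisely the global generation you already established, via the Fulton--Lazarsfeld positivity of cone classes (\cite[Chap.~12]{Fulton}), not the mere presence of the section; with that theorem invoked the alternative argument is also valid, but as written the claimed "automatic" non-negativity is false.
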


\begin{proof}
Recall that each line bundle $L_{\varpi_i}$ over $Fl_n$ is nef for $1\le i\le n-1$ (e.g.\ \cite[the proof of Proposition 1.4.1]{Bri} or \cite[Lemma 3.5]{ab-fu-ze2}). Hence the restriction of $L_{\varpi_i}$ over $X_L$ is nef as well for $1\le i\le n-1$. Thus the claim $\d_{JK}^L \ge 0$ follows from \eqref{eq: djkl in terms of pairing} and the positivity of intersection numbers of nef divisors \cite[Example 1.4.16]{Lazarsfeld}.
\end{proof}

\bigskip

\section{Structure constants and Left-right diagrams}\label{sec: LR diagram}
Recall from the previous section that the structure constants $\d_{JK}^L$ are defined to be the coefficients of the expansion formula \eqref{eq:Schubert_calculus_for_varpi_Pet} for the product $\varpi_J \cdot \varpi_K$ for $J,K\subseteq[n-1]$:
\begin{align*}
\varpi_J \cdot \varpi_K =\sum_{L \subseteq [n-1]} \d_{JK}^L \, \varpi_L, \ \ \ \d_{JK}^L \in \Z.
\end{align*}
In this section, we provide a manifestly positive combinatorial formula which computes the structure constant $\d_{JK}^L$ for all $J,K,L\subseteq[n-1]$. We start with the following lemma which tells us how to expand a monomial of $\varpi_1,\ldots,\varpi_{n-1}$ containing a square in the simplest case.

\begin{lemma} \label{lemm:AHKZ}
For $1 \leq \a \leq i \leq \b \leq n-1$, we have
\begin{align*}
\varpi_{i}\cdot 
(\varpi_{\a}\varpi_{\a+1} \cdots \varpi_{\b})
= \frac{\b-i+1}{\b-\a+2} \varpi_{\a-1}\varpi_{\a} \cdots \varpi_{\b} + \frac{i-\a+1}{\b-\a+2} \varpi_{\a} \cdots \varpi_{\b}\varpi_{\b+1}
\end{align*}
in $H^*(\Pet{n};\Z)$, where we take the convention $\varpi_0=\varpi_n=0$.
\end{lemma}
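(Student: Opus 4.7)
The plan is to reduce everything to the single quadratic relation $\alpha_i\varpi_i=0$ of Lemma 4.4, rewritten purely in terms of the fundamental weights.

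First, I would use $\alpha_i = x_i - x_{i+1}$ and $x_j = \varpi_j - \varpi_{j-1}$ (with the convention $\varpi_0=0$ and $\varpi_n=0$, the latter coming from $x_1+\cdots+x_n=0$) to obtain the identity
\begin{equation*}
\alpha_i = 2\varpi_i - \varpi_{i-1} - \varpi_{i+1} \qquad (1 \le i \le n-1).
\end{equation*}
Combined with Lemma~4.4, this yields the key relation
\begin{equation*}
2\varpi_i^2 = \varpi_{i-1}\varpi_i + \varpi_i\varpi_{i+1} \quad \text{in } H^*(\Pet{n};\Z),
\end{equation*}
valid for all $1 \le i \le n-1$ (the boundary conventions $\varpi_0=\varpi_n=0$ make this consistent at $i=1$ and $i=n-1$).

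Next, set $P := \varpi_a\varpi_{a+1}\cdots\varpi_b$ and define $g(i) := P\cdot\varpi_i$ for $a-1 \le i \le b+1$. Note that the left-hand side of the lemma is precisely $g(i)$, while the two terms on the right-hand side are $g(a-1) = \varpi_{a-1}\varpi_a\cdots\varpi_b$ and $g(b+1) = \varpi_a\cdots\varpi_b\varpi_{b+1}$. Multiplying the key relation by $\prod_{a\le j\le b,\, j\ne i}\varpi_j$ gives, for every $a \le i \le b$,
\begin{equation*}
2\,g(i) = g(i-1) + g(i+1).
\end{equation*}

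This second-order linear recurrence says that $g$ is an affine function of $i$ on the interval $[a-1,b+1]$. Hence by linear interpolation between its boundary values,
\begin{equation*}
g(i) = \frac{(b+1)-i}{(b+1)-(a-1)}\,g(a-1) + \frac{i-(a-1)}{(b+1)-(a-1)}\,g(b+1) = \frac{b-i+1}{b-a+2}\,g(a-1) + \frac{i-a+1}{b-a+2}\,g(b+1),
\end{equation*}
which is exactly the stated formula.

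I do not expect a serious obstacle here; the only point requiring a little care is the verification that the quadratic relation $2\varpi_i^2 = \varpi_{i-1}\varpi_i + \varpi_i\varpi_{i+1}$ holds at the endpoints $i=1$ and $i=n-1$ under the conventions $\varpi_0=\varpi_n=0$, but this is immediate from $\varpi_n = x_1 + \cdots + x_n = 0$.
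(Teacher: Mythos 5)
Your proof is correct, and it reaches the identity by a genuinely different route than the paper. Both arguments start from the same input, namely Lemma~\ref{lem: vanishing product} rewritten via $\alpha_i=2\varpi_i-\varpi_{i-1}-\varpi_{i+1}$ as the quadratic relation $2\varpi_i^2=\varpi_{i-1}\varpi_i+\varpi_i\varpi_{i+1}$ (this is exactly the paper's base case $b-a=0$). From there the paper runs an induction on $b-a$, first treating $i=a$ with a small trick in which the left-hand side reappears on the right and is solved for, and then reducing general $i$ to that case; the computation is correct but somewhat laborious. You instead multiply the single quadratic relation by the fixed monomial $\prod_{a\le j\le b,\ j\ne i}\varpi_j$ to obtain the three-term recurrence $2g(i)=g(i-1)+g(i+1)$ for $g(i)=\varpi_a\cdots\varpi_b\cdot\varpi_i$ on $a\le i\le b$, and conclude by linear interpolation between the boundary values $g(a-1)$ and $g(b+1)$; your treatment of the endpoint conventions $\varpi_0=\varpi_n=0$ (using $x_1+\cdots+x_n=0$) is also right. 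This avoids the induction and the case split entirely and makes the origin of the coefficients $\frac{b-i+1}{b-a+2}$ and $\frac{i-a+1}{b-a+2}$ transparent as a discrete harmonicity statement. The only point worth making explicit is that the interpolation step divides by $b-a+2$, so it is carried out in $H^*(\Pet{n};\Q)$; this is harmless because the asserted identity already involves rational coefficients and $H^*(\Pet{n};\Z)$ is torsion-free (affine paving), so no information is lost -- the paper's inductive computation has the same implicit feature.
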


\begin{proof}
We prove the claim by induction on $b-a(\ge0)$. When $b-a=0$, we have $a=i=b$ so that the left hand side is $\varpi_a^2$. Noticing that $\alpha_{a} = -\varpi_{a-1}+2\varpi_{a}-\varpi_{a+1}$ (with the above convention), we have that 
\begin{align*}
\varpi_{a} ( -\varpi_{a-1}+2\varpi_{a}-\varpi_{a+1}) =0
\end{align*}
by Lemma~\ref{lem: vanishing product}. Thus the claim in this case follows since this equality can be expressed as
\begin{align*}
\varpi_{a}^2 = \frac{1}{2}\varpi_{a-1}\varpi_{a}+\frac{1}{2}\varpi_{a}\varpi_{a+1}.
\end{align*}

We now prove the claim for the case $a<b$. Assume by induction that the claim holds for any $a'\le i' \le b'$ with $b'-a'<b-a$. When $i=a$, we have
\begin{align*}
\varpi_{a}(\varpi_{a}\varpi_{a+1}\cdots \varpi_{b})
&=\Big( \varpi_{a}(\varpi_{a}\varpi_{a+1}\cdots \varpi_{b-1}) \Big) \varpi_{b} \\
&=\Big(\frac{b-a}{b-a+1}\varpi_{a-1}\varpi_{a}\cdots \varpi_{b-1} + \frac{1}{b-a+1}\varpi_{a}\varpi_{a+1}\cdots \varpi_{b} \Big)\varpi_{b} \\
&\hspace{185pt}\text{(by the inductive hypothesis)}\\
&=\frac{b-a}{b-a+1} \varpi_{a-1}\varpi_{a}\cdots \varpi_{b} + \frac{1}{b-a+1} \varpi_{a}\Big( \varpi_{a+1}\cdots \varpi_{b-1}\varpi_{b}^2 \Big) \\
&=\frac{b-a}{b-a+1} \varpi_{a-1}\varpi_{a}\cdots \varpi_{b} \\
&\hspace{30pt}+ \frac{1}{(b-a+1)^2} \varpi_{a}\Big( \varpi_{a}\varpi_{a+1}\cdots \varpi_{b} + (b-a) \varpi_{a+1}\varpi_{a+2}\cdots \varpi_{b+1}\Big) \\
&\hspace{185pt} \text{(by the inductive hypothesis)}.
\end{align*}
Since the left hand side and the second summand of the right hand side are proportional, this equation can be written as
\begin{align*}
\frac{(b-a+1)^2-1}{(b-a+1)^2}\varpi_{a}^2(\varpi_{a+1}\cdots \varpi_{b}) 
&=\frac{b-a}{b-a+1} \varpi_{a-1}\varpi_{a}\cdots \varpi_{b} \\
&\hspace{50pt}+ \frac{b-a}{(b-a+1)^2} \varpi_{a}\varpi_{a+1}\cdots \varpi_{b+1}.
\end{align*}
Noticing that $(b-a+1)^2-1=(b-a)(b-a+2)$ for the numerator of the coefficient of the left hand side, we obtain that
\begin{align}\label{eq: relation of p 10}
\varpi_{a}^2(\varpi_{a+1}\cdots \varpi_{b}) = \ \frac{b-a+1}{b-a+2}\varpi_{a-1}\varpi_{a}\cdots \varpi_{b} + \frac{1}{b-a+2} \varpi_{a}\varpi_{a+1}\cdots \varpi_{b+1} 
\end{align}
which verifies the claim for the case $i=a$. Now suppose that $a<i(\le b)$. We then have that 
\begin{align*}
&\varpi_i(\varpi_{a}\varpi_{a+1}\cdots \varpi_{b})\\
&\hspace{20pt}=\varpi_{a}\Big(\varpi_i(\varpi_{a+1}\cdots \varpi_{b})\Big) \\
&\hspace{20pt}=\varpi_{a}\Big( \frac{b-i+1}{b-a+1}\varpi_{a}\varpi_{a+1}\cdots \varpi_{b} + \frac{i-a}{b-a+1} \varpi_{a+1}\varpi_{a+2}\cdots \varpi_{b+1} \Big)\\
&\hspace{220pt} \text{(by the induction hypothesis)} \\
&\hspace{20pt}=\frac{b-i+1}{b-a+1}\varpi_{a}^2(\varpi_{a+1}\cdots \varpi_{b}) + \frac{i-a}{b-a+1}\varpi_{a}\varpi_{a+1}\cdots \varpi_{b+1} \\ 
&\hspace{20pt}=\frac{b-i+1}{b-a+1} \Big( \frac{b-a+1}{b-a+2}\varpi_{a-1}\varpi_{a}\cdots \varpi_{b} + \frac{1}{b-a+2}\varpi_{a}\varpi_{a+1}\cdots \varpi_{b+1}\Big) \\
&\hspace{100pt} + \frac{i-a}{b-a+1}\varpi_{a}\varpi_{a+1}\cdots \varpi_{b+1}\qquad \text{(by \eqref{eq: relation of p 10})}\\
&\hspace{20pt}=\frac{b-i+1}{b-a+2} \varpi_{a-1}\varpi_{a}\cdots \varpi_{b} + \frac{i-a+1}{b-a+2}\varpi_{a}\varpi_{a+1}\cdots \varpi_{b+1}.
\end{align*}
Thus we complete the proof by induction.
\end{proof}

\vspace{10pt}

Lemma~\ref{lemm:AHKZ} is the simplest case of expansions, but it turns out that it provides an effective way for computing the expansion of $\varpi_J\cdot\varpi_K$ for $J,K\subseteq[n-1]$ as we see in the following example.

\vspace{5pt}

\begin{example} \label{ex:calculus}
{\rm Let $n=10$, and take $J=\{1,3,5,6,7 \}$ and $K=\{3,6,8 \}$. 
The product $\varpi_J \cdot \varpi_K$ can be computed by using Lemma~$\ref{lemm:AHKZ}$ repeatedly as follows. We first extract $\varpi_i$'s which produce squares: 
\begin{align*}
\varpi_J \cdot \varpi_K &= \left(\frac{1}{1!1!3!} \varpi_1\varpi_3\varpi_5\varpi_6\varpi_7 \right) \cdot \left(\frac{1}{1!1!1!} \varpi_3\varpi_6\varpi_8 \right) \\
&=\frac{1}{3!} \varpi_6 \cdot \varpi_3 \cdot \left( \varpi_1\varpi_3\varpi_5\varpi_6\varpi_7\varpi_8 \right).
\end{align*}
By applying Lemma~$\ref{lemm:AHKZ}$ to $\varpi_3^2$, this can be computed as
\begin{align*}
&\frac{1}{3!} \varpi_6 \cdot \varpi_3 \cdot \left( \varpi_1\varpi_3\varpi_5\varpi_6\varpi_7\varpi_8 \right) \\
&\hspace{20pt}=\frac{1}{3!}\varpi_6\cdot\left( \varpi_1 \left( \frac{1}{2}\varpi_2\varpi_3 + \frac{1}{2}\varpi_3\varpi_4 \right) \varpi_5\varpi_6\varpi_7\varpi_8 \right) \\
&\hspace{20pt}=\frac{1}{3!} \cdot \frac{1}{2}\varpi_6\cdot\left( \varpi_1\varpi_2\varpi_3\varpi_5\varpi_6\varpi_7\varpi_8 \right)+\frac{1}{3!} \cdot \frac{1}{2}\varpi_6\cdot\left( \varpi_1\varpi_3\varpi_4\varpi_5\varpi_6\varpi_7\varpi_8 \right) .
\end{align*}
Now by applying Lemma~$\ref{lemm:AHKZ}$ to $\varpi_5\varpi_6^2\varpi_7\varpi_8$ in the first summand and $\varpi_3\varpi_4\varpi_5\varpi_6^2\varpi_7\varpi_8$ in the second summand, we can continue our computation as
\begin{align*}
&\frac{1}{3!} \cdot \frac{1}{2}\varpi_6\cdot\left( \varpi_1\varpi_2\varpi_3\varpi_5\varpi_6\varpi_7\varpi_8 \right)+\frac{1}{3!} \cdot \frac{1}{2}\varpi_6\cdot\left( \varpi_1\varpi_3\varpi_4\varpi_5\varpi_6\varpi_7\varpi_8 \right) \\
&\hspace{20pt}=\frac{1}{3!} \cdot \frac{1}{2}\left( \varpi_1\varpi_2\varpi_3 \left( \frac{3}{5}\varpi_4\varpi_5\varpi_6\varpi_7\varpi_8 + \frac{2}{5}\varpi_5\varpi_6\varpi_7\varpi_8\varpi_9 \right) \right)\\
&\hspace{50pt}+\frac{1}{3!} \cdot \frac{1}{2}\left( \varpi_1 \left( \frac{3}{7}\varpi_2\varpi_3\varpi_4\varpi_5\varpi_6\varpi_7\varpi_8 + \frac{4}{7}\varpi_3\varpi_4\varpi_5\varpi_6\varpi_7\varpi_8\varpi_9 \right) \right) \\
&\hspace{20pt}=\frac{1}{3!} \cdot \left( \frac{1}{2} \cdot \frac{3}{5}+ \frac{1}{2} \cdot \frac{3}{7} \right) \cdot 8! \, \varpi_{\{1,2,3,4,5,6,7,8\}}\\
&\hspace{50pt}+\frac{1}{3!} \cdot \frac{1}{2} \cdot \frac{2}{5} \cdot 3! \cdot 5! \, \varpi_{\{1,2,3,5,6,7,8,9\}}
+\frac{1}{3!} \cdot \frac{1}{2} \cdot \frac{4}{7} \cdot 7! \, \varpi_{\{1,3,4,5,6,7,8,9\}} \\
&\hspace{20pt}=3456 \varpi_{\{1,2,3,4,5,6,7,8\}} + 24 \varpi_{\{1,2,3,5,6,7,8,9\}}+240 \varpi_{\{1,3,4,5,6,7,8,9\}}.
\end{align*}
Thus we conclude that 
\begin{align*}
\varpi_J\cdot \varpi_K=3456\varpi_{\{1,2,3,4,5,6,7,8\}}+24\varpi_{\{1,2,3,5,6,7,8,9\}}+240\varpi_{\{1,3,4,5,6,7,8,9\}}
\end{align*}
which gives a particular case of the expansion \eqref{eq:Schubert_calculus_for_varpi_Pet}.
As one can see, the geometric idea behind this computation is the realization of $\Omega_{J}$ by intersecting the divisors $E_i$; see \eqref{eq: div exp 10} and \eqref{eq: monomial exp of e(VJ)}. }
\end{example}

\vspace{10pt}

Let $J,K,L$ be subsets of $[n-1]$. By tracking the computations in the above example, 
it is straightforward to see that if $\varpi_L$ appears in the expansion of the product $\varpi_J \cdot \varpi_K$, then $L$ must contain $J \cup K$. Combining this with \eqref{eq: degree of piL}, we see that
\begin{align}\label{eq: 0 unless}
\d_{JK}^L = 0 \ \textrm{unless} \ L \supseteq J \cup K \textrm{ and } |L|=|J|+|K|.
\end{align}
We now introduce a combinatorial object which effectively computes the structure constants $\d_{JK}^L$.
Because of \eqref{eq: 0 unless}, we always assume that $L \supseteq J \cup K$ and $|L|=|J|+|K|$ in what follows. 
We first prepare the following two steps.
\begin{enumerate}
\item Write the elements of $[n-1]$ in increasing order, and draw a square grid of size $(1+|J \cap K|) \times |L|$ over the subset $L\subseteq[n-1]$. 
On the left side of the grid, write the elements of $J \cap K$ in increasing order from the second row to the bottom row.
\end{enumerate}
For each box in the grid, we define the \textit{row number} of the box as the number which is written beside the row containing the box, and define the \textit{column number} of the box as the number which is written below the column containing the box.
\begin{enumerate}
\setcounter{enumi}{1}
\item Shade the boxes in the first row whose column numbers belong to $J \cup K(\subseteq L)$. Mark each box with a cross $\times$ whose row number is the same as the column number.
\end{enumerate}

\vspace{10pt}

\begin{example} \label{ex:diagram_setup} 
{\rm
Let $n=10$ and take $J=\{1,3,5,6,7 \}$ and $K=\{3,6,8 \}$ as in the previous example. We depict the resulting grids after the steps (1) and (2) for the following two choices of $L$.}
\begin{enumerate}
\item[(i)] {\rm If $L=\{1,2,3,4,5,6,7,8 \}$, then the resulting grid is as follows.}\vspace{10pt}
\begin{center}
{\unitlength 0.1in%
\begin{picture}(16.7000,7.4000)(30.0000,-28.7000)%
%
\special{pn 0}%
\special{sh 0.200}%
\special{pa 3159 2200}%
\special{pa 3318 2200}%
\special{pa 3318 2359}%
\special{pa 3159 2359}%
\special{pa 3159 2200}%
\special{ip}%
\special{pn 20}%
\special{pa 3159 2200}%
\special{pa 3318 2200}%
\special{pa 3318 2359}%
\special{pa 3159 2359}%
\special{pa 3159 2200}%
\special{pa 3318 2200}%
\special{fp}%
%
\special{pn 20}%
\special{pa 3318 2200}%
\special{pa 3477 2200}%
\special{pa 3477 2359}%
\special{pa 3318 2359}%
\special{pa 3318 2200}%
\special{pa 3477 2200}%
\special{fp}%
%
\special{pn 0}%
\special{sh 0.200}%
\special{pa 3477 2200}%
\special{pa 3636 2200}%
\special{pa 3636 2359}%
\special{pa 3477 2359}%
\special{pa 3477 2200}%
\special{ip}%
\special{pn 20}%
\special{pa 3477 2200}%
\special{pa 3636 2200}%
\special{pa 3636 2359}%
\special{pa 3477 2359}%
\special{pa 3477 2200}%
\special{pa 3636 2200}%
\special{fp}%
%
\special{pn 20}%
\special{pa 3636 2200}%
\special{pa 3795 2200}%
\special{pa 3795 2359}%
\special{pa 3636 2359}%
\special{pa 3636 2200}%
\special{pa 3795 2200}%
\special{fp}%
%
\special{pn 0}%
\special{sh 0.200}%
\special{pa 3795 2200}%
\special{pa 3954 2200}%
\special{pa 3954 2359}%
\special{pa 3795 2359}%
\special{pa 3795 2200}%
\special{ip}%
\special{pn 20}%
\special{pa 3795 2200}%
\special{pa 3954 2200}%
\special{pa 3954 2359}%
\special{pa 3795 2359}%
\special{pa 3795 2200}%
\special{pa 3954 2200}%
\special{fp}%
%
\special{pn 0}%
\special{sh 0.200}%
\special{pa 3954 2200}%
\special{pa 4113 2200}%
\special{pa 4113 2359}%
\special{pa 3954 2359}%
\special{pa 3954 2200}%
\special{ip}%
\special{pn 20}%
\special{pa 3954 2200}%
\special{pa 4113 2200}%
\special{pa 4113 2359}%
\special{pa 3954 2359}%
\special{pa 3954 2200}%
\special{pa 4113 2200}%
\special{fp}%
%
\special{pn 0}%
\special{sh 0.200}%
\special{pa 4113 2200}%
\special{pa 4272 2200}%
\special{pa 4272 2359}%
\special{pa 4113 2359}%
\special{pa 4113 2200}%
\special{ip}%
\special{pn 20}%
\special{pa 4113 2200}%
\special{pa 4272 2200}%
\special{pa 4272 2359}%
\special{pa 4113 2359}%
\special{pa 4113 2200}%
\special{pa 4272 2200}%
\special{fp}%
%
\special{pn 0}%
\special{sh 0.200}%
\special{pa 4272 2200}%
\special{pa 4431 2200}%
\special{pa 4431 2359}%
\special{pa 4272 2359}%
\special{pa 4272 2200}%
\special{ip}%
\special{pn 20}%
\special{pa 4272 2200}%
\special{pa 4431 2200}%
\special{pa 4431 2359}%
\special{pa 4272 2359}%
\special{pa 4272 2200}%
\special{pa 4431 2200}%
\special{fp}%
%
\special{pn 20}%
\special{pa 3159 2359}%
\special{pa 3318 2359}%
\special{pa 3318 2518}%
\special{pa 3159 2518}%
\special{pa 3159 2359}%
\special{pa 3318 2359}%
\special{fp}%
%
\special{pn 20}%
\special{pa 3318 2359}%
\special{pa 3477 2359}%
\special{pa 3477 2518}%
\special{pa 3318 2518}%
\special{pa 3318 2359}%
\special{pa 3477 2359}%
\special{fp}%
%
\special{pn 20}%
\special{pa 3477 2359}%
\special{pa 3636 2359}%
\special{pa 3636 2518}%
\special{pa 3477 2518}%
\special{pa 3477 2359}%
\special{pa 3636 2359}%
\special{fp}%
%
\special{pn 20}%
\special{pa 3636 2359}%
\special{pa 3795 2359}%
\special{pa 3795 2518}%
\special{pa 3636 2518}%
\special{pa 3636 2359}%
\special{pa 3795 2359}%
\special{fp}%
%
\special{pn 20}%
\special{pa 3795 2359}%
\special{pa 3954 2359}%
\special{pa 3954 2518}%
\special{pa 3795 2518}%
\special{pa 3795 2359}%
\special{pa 3954 2359}%
\special{fp}%
%
\special{pn 20}%
\special{pa 3954 2359}%
\special{pa 4113 2359}%
\special{pa 4113 2518}%
\special{pa 3954 2518}%
\special{pa 3954 2359}%
\special{pa 4113 2359}%
\special{fp}%
%
\special{pn 20}%
\special{pa 4113 2359}%
\special{pa 4272 2359}%
\special{pa 4272 2518}%
\special{pa 4113 2518}%
\special{pa 4113 2359}%
\special{pa 4272 2359}%
\special{fp}%
%
\special{pn 20}%
\special{pa 4272 2359}%
\special{pa 4431 2359}%
\special{pa 4431 2518}%
\special{pa 4272 2518}%
\special{pa 4272 2359}%
\special{pa 4431 2359}%
\special{fp}%
%
\special{pn 20}%
\special{pa 3159 2518}%
\special{pa 3318 2518}%
\special{pa 3318 2677}%
\special{pa 3159 2677}%
\special{pa 3159 2518}%
\special{pa 3318 2518}%
\special{fp}%
%
\special{pn 20}%
\special{pa 3318 2518}%
\special{pa 3477 2518}%
\special{pa 3477 2677}%
\special{pa 3318 2677}%
\special{pa 3318 2518}%
\special{pa 3477 2518}%
\special{fp}%
%
\special{pn 20}%
\special{pa 3477 2518}%
\special{pa 3636 2518}%
\special{pa 3636 2677}%
\special{pa 3477 2677}%
\special{pa 3477 2518}%
\special{pa 3636 2518}%
\special{fp}%
%
\special{pn 20}%
\special{pa 3636 2518}%
\special{pa 3795 2518}%
\special{pa 3795 2677}%
\special{pa 3636 2677}%
\special{pa 3636 2518}%
\special{pa 3795 2518}%
\special{fp}%
%
\special{pn 20}%
\special{pa 3795 2518}%
\special{pa 3954 2518}%
\special{pa 3954 2677}%
\special{pa 3795 2677}%
\special{pa 3795 2518}%
\special{pa 3954 2518}%
\special{fp}%
%
\special{pn 20}%
\special{pa 3954 2518}%
\special{pa 4113 2518}%
\special{pa 4113 2677}%
\special{pa 3954 2677}%
\special{pa 3954 2518}%
\special{pa 4113 2518}%
\special{fp}%
%
\special{pn 20}%
\special{pa 4113 2518}%
\special{pa 4272 2518}%
\special{pa 4272 2677}%
\special{pa 4113 2677}%
\special{pa 4113 2518}%
\special{pa 4272 2518}%
\special{fp}%
%
\special{pn 20}%
\special{pa 4272 2518}%
\special{pa 4431 2518}%
\special{pa 4431 2677}%
\special{pa 4272 2677}%
\special{pa 4272 2518}%
\special{pa 4431 2518}%
\special{fp}%
\put(32.0000,-28.4000){\makebox(0,0)[lb]{$1$}}%
\put(33.5900,-28.4000){\makebox(0,0)[lb]{$2$}}%
\put(35.1800,-28.4000){\makebox(0,0)[lb]{$3$}}%
\put(36.7700,-28.4000){\makebox(0,0)[lb]{$4$}}%
\put(38.3600,-28.4000){\makebox(0,0)[lb]{$5$}}%
\put(39.9500,-28.4000){\makebox(0,0)[lb]{$6$}}%
\put(41.5400,-28.4000){\makebox(0,0)[lb]{$7$}}%
\put(43.1300,-28.4000){\makebox(0,0)[lb]{$8$}}%
\put(44.7200,-28.4000){\makebox(0,0)[lb]{$9$}}%
\put(30.2000,-25.1000){\makebox(0,0)[lb]{$3$}}%
\put(30.2000,-26.6900){\makebox(0,0)[lb]{$6$}}%
%
\special{pn 8}%
\special{pa 3520 2400}%
\special{pa 3600 2480}%
\special{fp}%
\special{pa 3600 2400}%
\special{pa 3520 2480}%
\special{fp}%
%
\special{pn 8}%
\special{pa 3990 2560}%
\special{pa 4070 2640}%
\special{fp}%
\special{pa 4070 2560}%
\special{pa 3990 2640}%
\special{fp}%
%
\special{pn 8}%
\special{pa 3000 2130}%
\special{pa 4670 2130}%
\special{pa 4670 2870}%
\special{pa 3000 2870}%
\special{pa 3000 2130}%
\special{ip}%
\end{picture}}%
\end{center}
{\rm For example, the row number of the marked box in the second row is $3$.}\\
\item[(ii)] {\rm If $L'=\{1,2,3,5,6,7,8,9 \}$, then the resulting grid is as follows.}\vspace{10pt}
\begin{center}
{\unitlength 0.1in%
\begin{picture}(16.7000,7.4000)(54.0000,-28.7000)%
%
\special{pn 0}%
\special{sh 0.200}%
\special{pa 5559 2200}%
\special{pa 5718 2200}%
\special{pa 5718 2359}%
\special{pa 5559 2359}%
\special{pa 5559 2200}%
\special{ip}%
\special{pn 20}%
\special{pa 5559 2200}%
\special{pa 5718 2200}%
\special{pa 5718 2359}%
\special{pa 5559 2359}%
\special{pa 5559 2200}%
\special{pa 5718 2200}%
\special{fp}%
%
\special{pn 20}%
\special{pa 5718 2200}%
\special{pa 5877 2200}%
\special{pa 5877 2359}%
\special{pa 5718 2359}%
\special{pa 5718 2200}%
\special{pa 5877 2200}%
\special{fp}%
%
\special{pn 0}%
\special{sh 0.200}%
\special{pa 5877 2200}%
\special{pa 6036 2200}%
\special{pa 6036 2359}%
\special{pa 5877 2359}%
\special{pa 5877 2200}%
\special{ip}%
\special{pn 20}%
\special{pa 5877 2200}%
\special{pa 6036 2200}%
\special{pa 6036 2359}%
\special{pa 5877 2359}%
\special{pa 5877 2200}%
\special{pa 6036 2200}%
\special{fp}%
%
\special{pn 20}%
\special{pa 6830 2200}%
\special{pa 6989 2200}%
\special{pa 6989 2359}%
\special{pa 6830 2359}%
\special{pa 6830 2200}%
\special{pa 6989 2200}%
\special{fp}%
%
\special{pn 0}%
\special{sh 0.200}%
\special{pa 6195 2200}%
\special{pa 6354 2200}%
\special{pa 6354 2359}%
\special{pa 6195 2359}%
\special{pa 6195 2200}%
\special{ip}%
\special{pn 20}%
\special{pa 6195 2200}%
\special{pa 6354 2200}%
\special{pa 6354 2359}%
\special{pa 6195 2359}%
\special{pa 6195 2200}%
\special{pa 6354 2200}%
\special{fp}%
%
\special{pn 0}%
\special{sh 0.200}%
\special{pa 6354 2200}%
\special{pa 6513 2200}%
\special{pa 6513 2359}%
\special{pa 6354 2359}%
\special{pa 6354 2200}%
\special{ip}%
\special{pn 20}%
\special{pa 6354 2200}%
\special{pa 6513 2200}%
\special{pa 6513 2359}%
\special{pa 6354 2359}%
\special{pa 6354 2200}%
\special{pa 6513 2200}%
\special{fp}%
%
\special{pn 0}%
\special{sh 0.200}%
\special{pa 6513 2200}%
\special{pa 6672 2200}%
\special{pa 6672 2359}%
\special{pa 6513 2359}%
\special{pa 6513 2200}%
\special{ip}%
\special{pn 20}%
\special{pa 6513 2200}%
\special{pa 6672 2200}%
\special{pa 6672 2359}%
\special{pa 6513 2359}%
\special{pa 6513 2200}%
\special{pa 6672 2200}%
\special{fp}%
%
\special{pn 0}%
\special{sh 0.200}%
\special{pa 6672 2200}%
\special{pa 6831 2200}%
\special{pa 6831 2359}%
\special{pa 6672 2359}%
\special{pa 6672 2200}%
\special{ip}%
\special{pn 20}%
\special{pa 6672 2200}%
\special{pa 6831 2200}%
\special{pa 6831 2359}%
\special{pa 6672 2359}%
\special{pa 6672 2200}%
\special{pa 6831 2200}%
\special{fp}%
%
\special{pn 20}%
\special{pa 5559 2359}%
\special{pa 5718 2359}%
\special{pa 5718 2518}%
\special{pa 5559 2518}%
\special{pa 5559 2359}%
\special{pa 5718 2359}%
\special{fp}%
%
\special{pn 20}%
\special{pa 5718 2359}%
\special{pa 5877 2359}%
\special{pa 5877 2518}%
\special{pa 5718 2518}%
\special{pa 5718 2359}%
\special{pa 5877 2359}%
\special{fp}%
%
\special{pn 20}%
\special{pa 5877 2359}%
\special{pa 6036 2359}%
\special{pa 6036 2518}%
\special{pa 5877 2518}%
\special{pa 5877 2359}%
\special{pa 6036 2359}%
\special{fp}%
%
\special{pn 20}%
\special{pa 6830 2359}%
\special{pa 6989 2359}%
\special{pa 6989 2518}%
\special{pa 6830 2518}%
\special{pa 6830 2359}%
\special{pa 6989 2359}%
\special{fp}%
%
\special{pn 20}%
\special{pa 6195 2359}%
\special{pa 6354 2359}%
\special{pa 6354 2518}%
\special{pa 6195 2518}%
\special{pa 6195 2359}%
\special{pa 6354 2359}%
\special{fp}%
%
\special{pn 20}%
\special{pa 6354 2359}%
\special{pa 6513 2359}%
\special{pa 6513 2518}%
\special{pa 6354 2518}%
\special{pa 6354 2359}%
\special{pa 6513 2359}%
\special{fp}%
%
\special{pn 20}%
\special{pa 6513 2359}%
\special{pa 6672 2359}%
\special{pa 6672 2518}%
\special{pa 6513 2518}%
\special{pa 6513 2359}%
\special{pa 6672 2359}%
\special{fp}%
%
\special{pn 20}%
\special{pa 6672 2359}%
\special{pa 6831 2359}%
\special{pa 6831 2518}%
\special{pa 6672 2518}%
\special{pa 6672 2359}%
\special{pa 6831 2359}%
\special{fp}%
%
\special{pn 20}%
\special{pa 5559 2518}%
\special{pa 5718 2518}%
\special{pa 5718 2677}%
\special{pa 5559 2677}%
\special{pa 5559 2518}%
\special{pa 5718 2518}%
\special{fp}%
%
\special{pn 20}%
\special{pa 5718 2518}%
\special{pa 5877 2518}%
\special{pa 5877 2677}%
\special{pa 5718 2677}%
\special{pa 5718 2518}%
\special{pa 5877 2518}%
\special{fp}%
%
\special{pn 20}%
\special{pa 5877 2518}%
\special{pa 6036 2518}%
\special{pa 6036 2677}%
\special{pa 5877 2677}%
\special{pa 5877 2518}%
\special{pa 6036 2518}%
\special{fp}%
%
\special{pn 20}%
\special{pa 6830 2518}%
\special{pa 6989 2518}%
\special{pa 6989 2677}%
\special{pa 6830 2677}%
\special{pa 6830 2518}%
\special{pa 6989 2518}%
\special{fp}%
%
\special{pn 20}%
\special{pa 6195 2518}%
\special{pa 6354 2518}%
\special{pa 6354 2677}%
\special{pa 6195 2677}%
\special{pa 6195 2518}%
\special{pa 6354 2518}%
\special{fp}%
%
\special{pn 20}%
\special{pa 6354 2518}%
\special{pa 6513 2518}%
\special{pa 6513 2677}%
\special{pa 6354 2677}%
\special{pa 6354 2518}%
\special{pa 6513 2518}%
\special{fp}%
%
\special{pn 20}%
\special{pa 6513 2518}%
\special{pa 6672 2518}%
\special{pa 6672 2677}%
\special{pa 6513 2677}%
\special{pa 6513 2518}%
\special{pa 6672 2518}%
\special{fp}%
%
\special{pn 20}%
\special{pa 6672 2518}%
\special{pa 6831 2518}%
\special{pa 6831 2677}%
\special{pa 6672 2677}%
\special{pa 6672 2518}%
\special{pa 6831 2518}%
\special{fp}%
\put(56.0000,-28.4000){\makebox(0,0)[lb]{$1$}}%
\put(57.5900,-28.4000){\makebox(0,0)[lb]{$2$}}%
\put(59.1800,-28.4000){\makebox(0,0)[lb]{$3$}}%
\put(60.7700,-28.4000){\makebox(0,0)[lb]{$4$}}%
\put(62.3600,-28.4000){\makebox(0,0)[lb]{$5$}}%
\put(63.9500,-28.4000){\makebox(0,0)[lb]{$6$}}%
\put(65.5400,-28.4000){\makebox(0,0)[lb]{$7$}}%
\put(67.1300,-28.4000){\makebox(0,0)[lb]{$8$}}%
\put(68.7200,-28.4000){\makebox(0,0)[lb]{$9$}}%
\put(54.2000,-25.1000){\makebox(0,0)[lb]{$3$}}%
\put(54.2000,-26.6900){\makebox(0,0)[lb]{$6$}}%
%
\special{pn 8}%
\special{pa 5920 2400}%
\special{pa 6000 2480}%
\special{fp}%
\special{pa 6000 2400}%
\special{pa 5920 2480}%
\special{fp}%
%
\special{pn 8}%
\special{pa 6390 2560}%
\special{pa 6470 2640}%
\special{fp}%
\special{pa 6470 2560}%
\special{pa 6390 2640}%
\special{fp}%
%
\special{pn 8}%
\special{pa 5400 2130}%
\special{pa 7070 2130}%
\special{pa 7070 2870}%
\special{pa 5400 2870}%
\special{pa 5400 2130}%
\special{ip}%
\end{picture}}%
\end{center}
\end{enumerate}
\end{example}

\vspace{10pt}

We now play a combinatorial game on the grid prepared above. Let us explain the rule of the game inductively.\vspace{5pt}

\noindent
(The game):
\begin{enumerate}
\item[] Assume that some boxes in the $i$-th row are shaded ($1\le i< |J\cap K|+1$). Then shade the boxes in the $(i+1)$-th row whose column numbers are the same as those of the shaded boxes in the $i$-th row. If there is a non-shaded box adjacent to the left (L) or the right (R) of the consecutive string of the shaded boxes in the $(i+1)$-th row containing the marked box, then shade one of them darkly. In this case, continue to the next row. If there are no such boxes, then we stop the game. \\
\end{enumerate}

We say that the combinatorial game explained above is \emph{successful} if we can continue the game to the bottom row. We define a \emph{left-right diagram} associated with $(J,K,L)$ as a configuration of boxes on a square grid of size $(1+|J \cap K|) \times |L|$ over $L(\subseteq[n-1])$ which obtained as the resulting configuration of the shaded boxes of a successful game. We denote by $\Delta_{J K}^L$ the set of left-right diagrams associated with $(J,K,L)$. 

\vspace{10pt}

\begin{example} \label{ex:diagram_game} 
{\rm We take the triples $(J,K,L)$ and $(J,K,L')$ given in Example~\ref{ex:diagram_setup}.}
\begin{enumerate}
\item[(i)] {\rm The left-right diagrams associated with $(J,K,L)$ are $P_1$ and $P_2$ in Figure~\ref{pic:left-right game for L}.}
\begin{figure}[h]
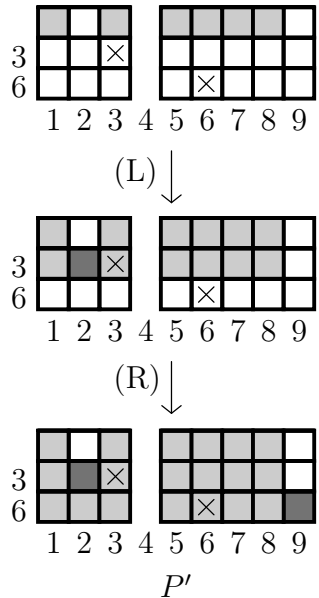

\begin{center}
{\unitlength 0.1in%
}%
\end{center}
\vspace{-15pt}
\caption{The (unique) game for the $(J,K,L')$.}
\label{pic:left-right game for L'}
\end{figure}
\end{enumerate}
\end{example}

Next, we define the weight of a left-right diagram $P \in \Delta_{J K}^L$ as follows. For each row of $P$ (except for the first row), we consider the consecutive string of the shaded boxes which contains the marked box in the focused row. Then the set of the column numbers for these boxes must be of the form $\{a,a+1,\ldots,b\}$ for some $a,b\in L$, and the column number $i$ of the marked box satisfies $a\le i\le b$. Motivated by Lemma~\ref{lemm:AHKZ}, we assign to this row a positive rational number given by \vspace{10pt}
\begin{equation*}
\begin{split}
&\frac{\b-i+1}{\b-\a+2} \ \ \  \textrm{if the additional box 
{\unitlength 0.1in%
\begin{picture}(2.0000,2.0800)(46.9000,-21.5800)%
%
\special{pn 0}%
\special{sh 0.550}%
\special{pa 4718 1999}%
\special{pa 4877 1999}%
\special{pa 4877 2158}%
\special{pa 4718 2158}%
\special{pa 4718 1999}%
\special{ip}%
\special{pn 20}%
\special{pa 4718 1999}%
\special{pa 4877 1999}%
\special{pa 4877 2158}%
\special{pa 4718 2158}%
\special{pa 4718 1999}%
\special{pa 4877 1999}%
\special{fp}%
%
\special{pn 8}%
\special{pa 4690 1950}%
\special{pa 4890 1950}%
\special{pa 4890 2150}%
\special{pa 4690 2150}%
\special{pa 4690 1950}%
\special{ip}%
\end{picture}}\ 
is to the left of the marked box
{\unitlength 0.1in%
\begin{picture}(2.0000,2.0800)(28.5000,-21.5800)%
%
\special{pn 0}%
\special{sh 0.200}%
\special{pa 2877 1999}%
\special{pa 3036 1999}%
\special{pa 3036 2158}%
\special{pa 2877 2158}%
\special{pa 2877 1999}%
\special{ip}%
\special{pn 20}%
\special{pa 2877 1999}%
\special{pa 3036 1999}%
\special{pa 3036 2158}%
\special{pa 2877 2158}%
\special{pa 2877 1999}%
\special{pa 3036 1999}%
\special{fp}%
%
\special{pn 8}%
\special{pa 2920 2040}%
\special{pa 3000 2120}%
\special{fp}%
\special{pa 3000 2040}%
\special{pa 2920 2120}%
\special{fp}%
%
\special{pn 8}%
\special{pa 2850 1950}%
\special{pa 3050 1950}%
\special{pa 3050 2150}%
\special{pa 2850 2150}%
\special{pa 2850 1950}%
\special{ip}%
\end{picture}}%
}, \\
&\frac{i-\a+1}{\b-\a+2} \ \ \  \textrm{if the additional box
{\unitlength 0.1in%
\begin{picture}(2.0000,2.0800)(46.9000,-21.5800)%
%
\special{pn 0}%
\special{sh 0.550}%
\special{pa 4718 1999}%
\special{pa 4877 1999}%
\special{pa 4877 2158}%
\special{pa 4718 2158}%
\special{pa 4718 1999}%
\special{ip}%
\special{pn 20}%
\special{pa 4718 1999}%
\special{pa 4877 1999}%
\special{pa 4877 2158}%
\special{pa 4718 2158}%
\special{pa 4718 1999}%
\special{pa 4877 1999}%
\special{fp}%
%
\special{pn 8}%
\special{pa 4690 1950}%
\special{pa 4890 1950}%
\special{pa 4890 2150}%
\special{pa 4690 2150}%
\special{pa 4690 1950}%
\special{ip}%
\end{picture}}\ 
is to the right of the marked box
{\unitlength 0.1in%
\begin{picture}(2.0000,2.0800)(28.5000,-21.5800)%
%
\special{pn 0}%
\special{sh 0.200}%
\special{pa 2877 1999}%
\special{pa 3036 1999}%
\special{pa 3036 2158}%
\special{pa 2877 2158}%
\special{pa 2877 1999}%
\special{ip}%
\special{pn 20}%
\special{pa 2877 1999}%
\special{pa 3036 1999}%
\special{pa 3036 2158}%
\special{pa 2877 2158}%
\special{pa 2877 1999}%
\special{pa 3036 1999}%
\special{fp}%
%
\special{pn 8}%
\special{pa 2920 2040}%
\special{pa 3000 2120}%
\special{fp}%
\special{pa 3000 2040}%
\special{pa 2920 2120}%
\special{fp}%
%
\special{pn 8}%
\special{pa 2850 1950}%
\special{pa 3050 1950}%
\special{pa 3050 2150}%
\special{pa 2850 2150}%
\special{pa 2850 1950}%
\special{ip}%
\end{picture}}%
}.
\end{split}
\end{equation*}\vspace{5pt}\\
Note that the column number of the additional box is $a-1$ in the former case and is $b+1$ in the latter case (cf. Lemma~\ref{lemm:AHKZ}). We may pictorially interpret this rational number as follows. 
\begin{itemize}
\item The denominator is the number of the shaded boxes counted from the additional box 
{\unitlength 0.1in%
\begin{picture}(2.0000,2.0800)(46.9000,-21.5800)%
%
\special{pn 0}%
\special{sh 0.550}%
\special{pa 4718 1999}%
\special{pa 4877 1999}%
\special{pa 4877 2158}%
\special{pa 4718 2158}%
\special{pa 4718 1999}%
\special{ip}%
\special{pn 20}%
\special{pa 4718 1999}%
\special{pa 4877 1999}%
\special{pa 4877 2158}%
\special{pa 4718 2158}%
\special{pa 4718 1999}%
\special{pa 4877 1999}%
\special{fp}%
%
\special{pn 8}%
\special{pa 4690 1950}%
\special{pa 4890 1950}%
\special{pa 4890 2150}%
\special{pa 4690 2150}%
\special{pa 4690 1950}%
\special{ip}%
\end{picture}}\ 
to the terminal box lying on the opposite side of the string of shaded boxes across the marked box
{\unitlength 0.1in%
\begin{picture}(2.0000,2.0800)(28.5000,-21.5800)%
%
\special{pn 0}%
\special{sh 0.200}%
\special{pa 2877 1999}%
\special{pa 3036 1999}%
\special{pa 3036 2158}%
\special{pa 2877 2158}%
\special{pa 2877 1999}%
\special{ip}%
\special{pn 20}%
\special{pa 2877 1999}%
\special{pa 3036 1999}%
\special{pa 3036 2158}%
\special{pa 2877 2158}%
\special{pa 2877 1999}%
\special{pa 3036 1999}%
\special{fp}%
%
\special{pn 8}%
\special{pa 2920 2040}%
\special{pa 3000 2120}%
\special{fp}%
\special{pa 3000 2040}%
\special{pa 2920 2120}%
\special{fp}%
%
\special{pn 8}%
\special{pa 2850 1950}%
\special{pa 3050 1950}%
\special{pa 3050 2150}%
\special{pa 2850 2150}%
\special{pa 2850 1950}%
\special{ip}%
\end{picture}}.
\item 
The numerator is the number of the shaded boxes counted from the marked box
{\unitlength 0.1in%
\begin{picture}(2.0000,2.0800)(28.5000,-21.5800)%
%
\special{pn 0}%
\special{sh 0.200}%
\special{pa 2877 1999}%
\special{pa 3036 1999}%
\special{pa 3036 2158}%
\special{pa 2877 2158}%
\special{pa 2877 1999}%
\special{ip}%
\special{pn 20}%
\special{pa 2877 1999}%
\special{pa 3036 1999}%
\special{pa 3036 2158}%
\special{pa 2877 2158}%
\special{pa 2877 1999}%
\special{pa 3036 1999}%
\special{fp}%
%
\special{pn 8}%
\special{pa 2920 2040}%
\special{pa 3000 2120}%
\special{fp}%
\special{pa 3000 2040}%
\special{pa 2920 2120}%
\special{fp}%
%
\special{pn 8}%
\special{pa 2850 1950}%
\special{pa 3050 1950}%
\special{pa 3050 2150}%
\special{pa 2850 2150}%
\special{pa 2850 1950}%
\special{ip}%
\end{picture}}\ 
to the same terminal box as above.
\end{itemize}
\vspace{5pt}
We define the \emph{weight} of $P$ as the product of these positive rational numbers assigned to the rows of $P$ (except for the first row), and denote it by $\wt(P)$.

\vspace{10pt}

\begin{example} \label{ex:diagram_weight}
{\rm Continuing with Example~$\ref{ex:diagram_game}$, 
the weights of the left-right diagrams $P_1, P_2, P'$ can be computed as follows.}
\begin{enumerate}
\item[(i)] {\rm The weights of the left-right diagrams $P_1$ and $P_2$ associated with the $(J,K,L)$ are\vspace{-5pt}
\begin{align*}
\wt(P_1)=\frac{1}{2} \cdot \frac{3}{5} \ \ {\rm and} \ \wt(P_2)=\frac{1}{2} \cdot \frac{3}{7}
\end{align*}
(See Figure~\ref{pic:weights of P}). By construction, these weights appear in the computation of the coefficient of the $\varpi_L=\varpi_{\{1,2,3,4,5,6,7,8\}}$ in Example~\ref{ex:calculus}.}

\begin{figure}[h]
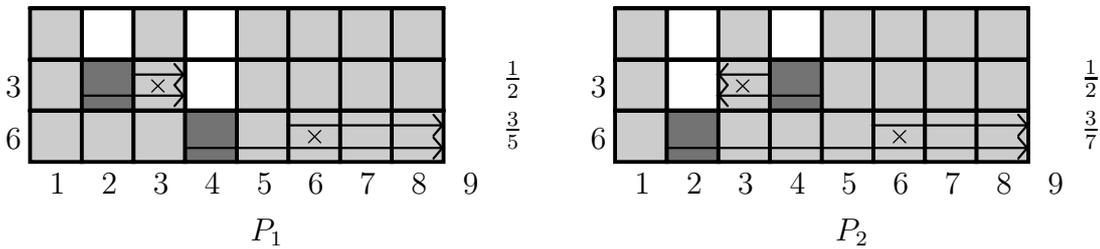

\begin{center}
{\unitlength 0.1in%
}%
\end{center}
\vspace{-15pt}
\caption{The computations of the weights of $P_1$ and $P_2$.}
\label{pic:weights of P}
\end{figure}

\item[(ii)] {\rm The weight of the left-right diagram $P' \in \Delta_{JK}^{L'}$ can be computed as
\begin{align*}
\wt(P')=\frac{1}{2} \cdot \frac{2}{5}.
\end{align*}
(See Figure~\ref{pic:weights of P'}).
This weight appears in the computation of the coefficient of the $\varpi_{L'}=\varpi_{\{1,2,3,5,6,7,8,9\}}$ in Example~\ref{ex:calculus} as well.}
\vspace{5pt}
\begin{figure}[h]
\begin{center}
{\unitlength 0.1in%
\begin{picture}(31.3800,13.1300)(6.7000,-40.6300)%
%
\special{pn 0}%
\special{sh 0.200}%
\special{pa 1043 2796}%
\special{pa 1311 2796}%
\special{pa 1311 3064}%
\special{pa 1043 3064}%
\special{pa 1043 2796}%
\special{ip}%
\special{pn 20}%
\special{pa 1043 2796}%
\special{pa 1311 2796}%
\special{pa 1311 3064}%
\special{pa 1043 3064}%
\special{pa 1043 2796}%
\special{pa 1311 2796}%
\special{fp}%
%
\special{pn 20}%
\special{pa 1311 2796}%
\special{pa 1579 2796}%
\special{pa 1579 3064}%
\special{pa 1311 3064}%
\special{pa 1311 2796}%
\special{pa 1579 2796}%
\special{fp}%
%
\special{pn 0}%
\special{sh 0.200}%
\special{pa 1579 2796}%
\special{pa 1846 2796}%
\special{pa 1846 3064}%
\special{pa 1579 3064}%
\special{pa 1579 2796}%
\special{ip}%
\special{pn 20}%
\special{pa 1579 2796}%
\special{pa 1846 2796}%
\special{pa 1846 3064}%
\special{pa 1579 3064}%
\special{pa 1579 2796}%
\special{pa 1846 2796}%
\special{fp}%
%
\special{pn 20}%
\special{pa 3186 2796}%
\special{pa 3454 2796}%
\special{pa 3454 3064}%
\special{pa 3186 3064}%
\special{pa 3186 2796}%
\special{pa 3454 2796}%
\special{fp}%
%
\special{pn 0}%
\special{sh 0.200}%
\special{pa 2114 2796}%
\special{pa 2381 2796}%
\special{pa 2381 3064}%
\special{pa 2114 3064}%
\special{pa 2114 2796}%
\special{ip}%
\special{pn 20}%
\special{pa 2114 2796}%
\special{pa 2381 2796}%
\special{pa 2381 3064}%
\special{pa 2114 3064}%
\special{pa 2114 2796}%
\special{pa 2381 2796}%
\special{fp}%
%
\special{pn 0}%
\special{sh 0.200}%
\special{pa 2381 2796}%
\special{pa 2649 2796}%
\special{pa 2649 3064}%
\special{pa 2381 3064}%
\special{pa 2381 2796}%
\special{ip}%
\special{pn 20}%
\special{pa 2381 2796}%
\special{pa 2649 2796}%
\special{pa 2649 3064}%
\special{pa 2381 3064}%
\special{pa 2381 2796}%
\special{pa 2649 2796}%
\special{fp}%
%
\special{pn 0}%
\special{sh 0.200}%
\special{pa 2649 2796}%
\special{pa 2916 2796}%
\special{pa 2916 3064}%
\special{pa 2649 3064}%
\special{pa 2649 2796}%
\special{ip}%
\special{pn 20}%
\special{pa 2649 2796}%
\special{pa 2916 2796}%
\special{pa 2916 3064}%
\special{pa 2649 3064}%
\special{pa 2649 2796}%
\special{pa 2916 2796}%
\special{fp}%
%
\special{pn 0}%
\special{sh 0.200}%
\special{pa 2916 2796}%
\special{pa 3184 2796}%
\special{pa 3184 3064}%
\special{pa 2916 3064}%
\special{pa 2916 2796}%
\special{ip}%
\special{pn 20}%
\special{pa 2916 2796}%
\special{pa 3184 2796}%
\special{pa 3184 3064}%
\special{pa 2916 3064}%
\special{pa 2916 2796}%
\special{pa 3184 2796}%
\special{fp}%
%
\special{pn 0}%
\special{sh 0.200}%
\special{pa 1043 3064}%
\special{pa 1311 3064}%
\special{pa 1311 3330}%
\special{pa 1043 3330}%
\special{pa 1043 3064}%
\special{ip}%
\special{pn 20}%
\special{pa 1043 3064}%
\special{pa 1311 3064}%
\special{pa 1311 3330}%
\special{pa 1043 3330}%
\special{pa 1043 3064}%
\special{pa 1311 3064}%
\special{fp}%
%
\special{pn 0}%
\special{sh 0.550}%
\special{pa 1311 3064}%
\special{pa 1579 3064}%
\special{pa 1579 3330}%
\special{pa 1311 3330}%
\special{pa 1311 3064}%
\special{ip}%
\special{pn 20}%
\special{pa 1311 3064}%
\special{pa 1579 3064}%
\special{pa 1579 3330}%
\special{pa 1311 3330}%
\special{pa 1311 3064}%
\special{pa 1579 3064}%
\special{fp}%
%
\special{pn 0}%
\special{sh 0.200}%
\special{pa 1579 3064}%
\special{pa 1846 3064}%
\special{pa 1846 3330}%
\special{pa 1579 3330}%
\special{pa 1579 3064}%
\special{ip}%
\special{pn 20}%
\special{pa 1579 3064}%
\special{pa 1846 3064}%
\special{pa 1846 3330}%
\special{pa 1579 3330}%
\special{pa 1579 3064}%
\special{pa 1846 3064}%
\special{fp}%
%
\special{pn 20}%
\special{pa 3186 3064}%
\special{pa 3454 3064}%
\special{pa 3454 3330}%
\special{pa 3186 3330}%
\special{pa 3186 3064}%
\special{pa 3454 3064}%
\special{fp}%
%
\special{pn 0}%
\special{sh 0.200}%
\special{pa 2114 3064}%
\special{pa 2381 3064}%
\special{pa 2381 3330}%
\special{pa 2114 3330}%
\special{pa 2114 3064}%
\special{ip}%
\special{pn 20}%
\special{pa 2114 3064}%
\special{pa 2381 3064}%
\special{pa 2381 3330}%
\special{pa 2114 3330}%
\special{pa 2114 3064}%
\special{pa 2381 3064}%
\special{fp}%
%
\special{pn 0}%
\special{sh 0.200}%
\special{pa 2381 3064}%
\special{pa 2649 3064}%
\special{pa 2649 3330}%
\special{pa 2381 3330}%
\special{pa 2381 3064}%
\special{ip}%
\special{pn 20}%
\special{pa 2381 3064}%
\special{pa 2649 3064}%
\special{pa 2649 3330}%
\special{pa 2381 3330}%
\special{pa 2381 3064}%
\special{pa 2649 3064}%
\special{fp}%
%
\special{pn 0}%
\special{sh 0.200}%
\special{pa 2649 3064}%
\special{pa 2916 3064}%
\special{pa 2916 3330}%
\special{pa 2649 3330}%
\special{pa 2649 3064}%
\special{ip}%
\special{pn 20}%
\special{pa 2649 3064}%
\special{pa 2916 3064}%
\special{pa 2916 3330}%
\special{pa 2649 3330}%
\special{pa 2649 3064}%
\special{pa 2916 3064}%
\special{fp}%
%
\special{pn 0}%
\special{sh 0.200}%
\special{pa 2916 3064}%
\special{pa 3184 3064}%
\special{pa 3184 3330}%
\special{pa 2916 3330}%
\special{pa 2916 3064}%
\special{ip}%
\special{pn 20}%
\special{pa 2916 3064}%
\special{pa 3184 3064}%
\special{pa 3184 3330}%
\special{pa 2916 3330}%
\special{pa 2916 3064}%
\special{pa 3184 3064}%
\special{fp}%
%
\special{pn 0}%
\special{sh 0.200}%
\special{pa 1043 3330}%
\special{pa 1311 3330}%
\special{pa 1311 3598}%
\special{pa 1043 3598}%
\special{pa 1043 3330}%
\special{ip}%
\special{pn 20}%
\special{pa 1043 3330}%
\special{pa 1311 3330}%
\special{pa 1311 3598}%
\special{pa 1043 3598}%
\special{pa 1043 3330}%
\special{pa 1311 3330}%
\special{fp}%
%
\special{pn 0}%
\special{sh 0.200}%
\special{pa 1311 3330}%
\special{pa 1579 3330}%
\special{pa 1579 3598}%
\special{pa 1311 3598}%
\special{pa 1311 3330}%
\special{ip}%
\special{pn 20}%
\special{pa 1311 3330}%
\special{pa 1579 3330}%
\special{pa 1579 3598}%
\special{pa 1311 3598}%
\special{pa 1311 3330}%
\special{pa 1579 3330}%
\special{fp}%
%
\special{pn 0}%
\special{sh 0.200}%
\special{pa 1579 3330}%
\special{pa 1846 3330}%
\special{pa 1846 3598}%
\special{pa 1579 3598}%
\special{pa 1579 3330}%
\special{ip}%
\special{pn 20}%
\special{pa 1579 3330}%
\special{pa 1846 3330}%
\special{pa 1846 3598}%
\special{pa 1579 3598}%
\special{pa 1579 3330}%
\special{pa 1846 3330}%
\special{fp}%
%
\special{pn 0}%
\special{sh 0.550}%
\special{pa 3186 3330}%
\special{pa 3454 3330}%
\special{pa 3454 3598}%
\special{pa 3186 3598}%
\special{pa 3186 3330}%
\special{ip}%
\special{pn 20}%
\special{pa 3186 3330}%
\special{pa 3454 3330}%
\special{pa 3454 3598}%
\special{pa 3186 3598}%
\special{pa 3186 3330}%
\special{pa 3454 3330}%
\special{fp}%
%
\special{pn 0}%
\special{sh 0.200}%
\special{pa 2114 3330}%
\special{pa 2381 3330}%
\special{pa 2381 3598}%
\special{pa 2114 3598}%
\special{pa 2114 3330}%
\special{ip}%
\special{pn 20}%
\special{pa 2114 3330}%
\special{pa 2381 3330}%
\special{pa 2381 3598}%
\special{pa 2114 3598}%
\special{pa 2114 3330}%
\special{pa 2381 3330}%
\special{fp}%
%
\special{pn 0}%
\special{sh 0.200}%
\special{pa 2381 3330}%
\special{pa 2649 3330}%
\special{pa 2649 3598}%
\special{pa 2381 3598}%
\special{pa 2381 3330}%
\special{ip}%
\special{pn 20}%
\special{pa 2381 3330}%
\special{pa 2649 3330}%
\special{pa 2649 3598}%
\special{pa 2381 3598}%
\special{pa 2381 3330}%
\special{pa 2649 3330}%
\special{fp}%
%
\special{pn 0}%
\special{sh 0.200}%
\special{pa 2649 3330}%
\special{pa 2916 3330}%
\special{pa 2916 3598}%
\special{pa 2649 3598}%
\special{pa 2649 3330}%
\special{ip}%
\special{pn 20}%
\special{pa 2649 3330}%
\special{pa 2916 3330}%
\special{pa 2916 3598}%
\special{pa 2649 3598}%
\special{pa 2649 3330}%
\special{pa 2916 3330}%
\special{fp}%
%
\special{pn 0}%
\special{sh 0.200}%
\special{pa 2916 3330}%
\special{pa 3184 3330}%
\special{pa 3184 3598}%
\special{pa 2916 3598}%
\special{pa 2916 3330}%
\special{ip}%
\special{pn 20}%
\special{pa 2916 3330}%
\special{pa 3184 3330}%
\special{pa 3184 3598}%
\special{pa 2916 3598}%
\special{pa 2916 3330}%
\special{pa 3184 3330}%
\special{fp}%
\put(11.4400,-37.6200){\makebox(0,0)[lb]{$1$}}%
\put(14.1200,-37.6200){\makebox(0,0)[lb]{$2$}}%
\put(16.7900,-37.6200){\makebox(0,0)[lb]{$3$}}%
\put(19.4600,-37.6200){\makebox(0,0)[lb]{$4$}}%
\put(22.1500,-37.6200){\makebox(0,0)[lb]{$5$}}%
\put(24.8200,-37.6200){\makebox(0,0)[lb]{$6$}}%
\put(27.4900,-37.6200){\makebox(0,0)[lb]{$7$}}%
\put(30.1600,-37.6200){\makebox(0,0)[lb]{$8$}}%
\put(32.8400,-37.6200){\makebox(0,0)[lb]{$9$}}%
\put(9.1600,-35.2700){\makebox(0,0)[lb]{$6$}}%
\put(9.1600,-32.6100){\makebox(0,0)[lb]{$3$}}%
\put(35.5600,-35.3100){\makebox(0,0)[lb]{$\frac{2}{5}$}}%
\put(35.5600,-32.6400){\makebox(0,0)[lb]{$\frac{1}{2}$}}%
\put(21.8300,-40.3600){\makebox(0,0)[lb]{$P'$}}%
%
\special{pn 8}%
\special{pa 1673 3170}%
\special{pa 1737 3233}%
\special{fp}%
%
\special{pn 8}%
\special{pa 1737 3170}%
\special{pa 1673 3233}%
\special{fp}%
%
\special{pn 13}%
\special{pa 1308 3252}%
\special{pa 1846 3252}%
\special{fp}%
%
\special{pn 13}%
\special{pa 1572 3142}%
\special{pa 1837 3142}%
\special{fp}%
%
\special{pn 13}%
\special{pa 1837 3142}%
\special{pa 1791 3087}%
\special{fp}%
%
\special{pn 13}%
\special{pa 1837 3142}%
\special{pa 1791 3197}%
\special{fp}%
%
\special{pn 13}%
\special{pa 1837 3252}%
\special{pa 1791 3197}%
\special{fp}%
%
\special{pn 13}%
\special{pa 1837 3252}%
\special{pa 1791 3306}%
\special{fp}%
%
\special{pn 13}%
\special{pa 2116 3525}%
\special{pa 3448 3525}%
\special{fp}%
%
\special{pn 13}%
\special{pa 2118 3525}%
\special{pa 2164 3471}%
\special{fp}%
%
\special{pn 13}%
\special{pa 2118 3525}%
\special{pa 2164 3580}%
\special{fp}%
%
\special{pn 8}%
\special{pa 2485 3434}%
\special{pa 2548 3498}%
\special{fp}%
%
\special{pn 8}%
\special{pa 2548 3434}%
\special{pa 2485 3498}%
\special{fp}%
%
\special{pn 13}%
\special{pa 2115 3407}%
\special{pa 2648 3407}%
\special{fp}%
%
\special{pn 13}%
\special{pa 2120 3400}%
\special{pa 2166 3345}%
\special{fp}%
%
\special{pn 13}%
\special{pa 2120 3400}%
\special{pa 2166 3454}%
\special{fp}%
%
\special{pn 8}%
\special{pa 670 2750}%
\special{pa 3808 2750}%
\special{pa 3808 4063}%
\special{pa 670 4063}%
\special{pa 670 2750}%
\special{ip}%
\end{picture}}%
\end{center}
\vspace{-10pt}
\caption{The computation of the weight of $P'$.}
\label{pic:weights of P'}
\end{figure}
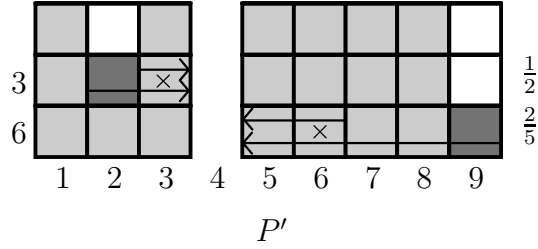
\end{enumerate}

\end{example}

We now summarize our computation of the structure constants. For $J\subseteq [n-1]$, recall from Definition~\ref{defi:varpi} that
\begin{align*}
\varpi_J 
= \frac{1}{m_J} e(V_J)
= \frac{1}{m_J} \prod_{i\in J}\varpi_i .
\end{align*}

\begin{theorem} \label{theorem:LeftRight_diagram}
Let $J,K$ be subsets of $[n-1]$. In $H^*(\Pet{n};\Z)$, we have
\begin{align}\label{eq:structure_constants_expansion}
\varpi_J \cdot \varpi_K =\sum_{\substack{K\cup J\subseteq L \subseteq [n-1]\\ |L|=|J|+|K|}} \d_{JK}^L \, \varpi_L, \ \ \ \d_{JK}^L \in \Z,
\end{align}
and the structure constant $\d_{JK}^L$ in this equality is given by
\begin{align*}
\d_{JK}^L=\frac{m_L}{m_Jm_K} \sum_{P \in \Delta_{J K}^L} \wt(P), 
\end{align*}
where $\Delta_{J K}^L$ is the set of left-right diagrams and $\wt(P)$ is the weight of $P$ defined above. In particular, we have $\d_{JK}^L=0$ in $\eqref{eq:structure_constants_expansion}$ if and only if $\Delta_{J K}^L = \emptyset$.
\end{theorem}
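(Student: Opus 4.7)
The plan is to expand $\varpi_J \cdot \varpi_K$ by applying Lemma~\ref{lemm:AHKZ} iteratively and to identify each resulting term with a left-right diagram. Starting from
\begin{align*}
\varpi_J \cdot \varpi_K = \frac{1}{m_J m_K} \prod_{i \in J} \varpi_i \cdot \prod_{k \in K} \varpi_k,
\end{align*}
the algorithm processes the squared factors $\varpi_{i_0}^2$, indexed by $i_0 \in J \cap K$, one at a time in increasing order of $i_0$. At each stage the current state is recorded by a square-free support $S \subseteq [n-1]$ together with a set $T \subseteq J \cap K$ of unprocessed squares, starting with $S = J \cup K$ and $T = J \cap K$. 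To process the smallest $i_0 \in T$, I would factor out the variables $\varpi_j$ for the other squares $j \in T$ lying in the maximal consecutive range $[a,b] \subseteq S$ containing $i_0$, apply Lemma~\ref{lemm:AHKZ} to the remaining block $\varpi_a \cdots \varpi_{i_0} \cdot \varpi_{i_0} \cdots \varpi_b$, and obtain two summands corresponding to the choices (L) or (R); in each, the index $i_0$ is removed from $T$ and either $a-1$ or $b+1$ is added to $S$.

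The heart of the argument is then a bijection between sequences of such expansions that terminate in square-free monomials supported on some $L \supseteq J \cup K$ with $|L|=|J|+|K|$ and the left-right diagrams in $\Delta_{JK}^L$. The support $S$ after $r$ expansions matches the set of shaded column indices in the $(r+1)$-th row of the associated diagram: the inheritance rule of the game (``shade the boxes in row $r+1$ whose column numbers are the same as those in row $r$'') records the inheritance of $S$, and the dark shading of one new box records the (L)/(R) choice. The marked box of row $r+1$ sits in column $i_0$ (which is the row label), and the maximal consecutive shaded string containing it is precisely the range $[a,b]$ used above; the move is available if and only if the corresponding summand of Lemma~\ref{lemm:AHKZ} is nonzero, with the convention $\varpi_0 = \varpi_n = 0$ handling the boundary. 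Hence the game succeeds exactly when the sequence of $|J \cap K|$ expansions yields a square-free monomial indexed by $L$.

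On the coefficient side, Lemma~\ref{lemm:AHKZ} contributes the factor $\frac{b-i_0+1}{b-a+2}$ or $\frac{i_0-a+1}{b-a+2}$ at each step according to the choice of (L) or (R), and the product of these factors over the rows of a diagram $P$ is exactly $\wt(P)$ by definition. Consequently, every successful path contributes $\wt(P)\prod_{l \in L}\varpi_l = \wt(P) \cdot e(V_L) = \wt(P) \cdot m_L \varpi_L$ by \eqref{eq: monomial exp of e(VJ)} and Definition~\ref{defi:varpi}. Summing over all paths yields
\begin{align*}
\varpi_J \cdot \varpi_K = \sum_{L} \frac{m_L}{m_J m_K} \Bigl( \sum_{P \in \Delta_{JK}^L} \wt(P) \Bigr) \varpi_L,
\end{align*}
which matches \eqref{eq:structure_constants_expansion} and identifies $d_{JK}^L$ as claimed; the final assertion on the vanishing of $d_{JK}^L$ follows because each $\wt(P)$ is a positive rational number.

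The principal obstacle I anticipate is the careful verification carried out in the second paragraph: one must confirm that the ``factor out other squares'' maneuver preserves the inductive structure, so that squares at positions larger than $i_0$ remain available for later iterations after the consecutive range $[a,b]$ has been enlarged, and that the game's inheritance rule faithfully tracks the support $S$ throughout. Neither point is deep, but establishing the exact correspondence between expansion paths and left-right diagrams, in a way that is both well-defined and exhaustive, demands attention to avoid double-counting or missing configurations.
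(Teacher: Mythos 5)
Your proposal is correct and follows essentially the same route as the paper: extract the squares indexed by $J\cap K$, apply Lemma~\ref{lemm:AHKZ} repeatedly in increasing order of the repeated indices, identify each expansion path with a successful game (hence a left-right diagram) whose weight is the product of the coefficients picked up along the way, and convert $\prod_{q\in L}\varpi_q$ into $m_L\varpi_L$. The bookkeeping you flag as the main obstacle is exactly what the paper compresses into the phrase ``by the construction of the left-right diagrams and their weights,'' so no new idea is needed.
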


\begin{proof}
Recall from Definition~\ref{defi:varpi} that
\begin{align}\label{eq: rational prod}
\varpi_J \cdot \varpi_K = \frac{1}{m_Jm_K} \left( \prod_{j\in J} \varpi_j \right) \cdot \left(\prod_{k\in K} \varpi_k \right) .
\end{align}
If $J\cap K=\emptyset$, then this does not contain a square of $\varpi_1,\ldots,\varpi_{n-1}$, and it is clearly equal to 
\begin{align*}
\frac{m_L}{m_Jm_K} \varpi_L.
\end{align*}
Thus we may assume that $J\cap K\ne\emptyset$ which implies that the right hand side of \eqref{eq: rational prod} contains some squares. By extracting the terms which produce the squares, we can express the product in the right hand side of \eqref{eq: rational prod} as
\begin{align}\label{eq: rational prod rewrite}
\left( \prod_{j\in J} \varpi_j \right) \cdot \left(\prod_{k\in K} \varpi_k \right) 
= \left(\prod_{i\in J \cap K} \varpi_i \right)  \cdot \left( \prod_{q\in J \cup K} \varpi_q \right).
\end{align}
We compute the product in the right hand side of this equality. For this purpose, take the decomposition $J \cup K= M_1 \sqcup \cdots \sqcup M_s$ into the connected components. Let $i$ be the smallest element of $J \cap K$. Then we have $i\in M_r$ for some $r$ ($1\le r\le s$). Since $M_r$ is connected, we can express it as $M_r=\{a,a+1,\ldots,b\}$ for some $a,b\in J \cup K$ with $a\le i\le b$. Then, by Lemma~\ref{lemm:AHKZ} the product $\varpi_i\cdot \left( \prod_{q\in J \cup K} \varpi_q \right)$ can be expanded as 
\begin{align*}
\varpi_i\cdot \left( \prod_{q\in J \cup K} \varpi_q \right)=
\frac{\b-i+1}{\b-\a+2} \prod_{q\in J \cup K \cup \{a-1\} } \varpi_q + \frac{i-\a+1}{\b-\a+2} \prod_{q\in J \cup K \cup \{b+1\} } \varpi_q ,
\end{align*}
where we have no squares of $\varpi_q$'s in the right hand side since $M_r$ is a connected component of $J\cup K$. If $|J\cap K|\ge 2$, then let $i'$ be the smallest element of $J\cap K\setminus\{i\}$. Multiplying $\varpi_{i'}$ to the right hand side of this equality, we can expand it by square-free monomials in $\varpi_1,\ldots,\varpi_{n-1}$ by Lemma~\ref{lemm:AHKZ} again (cf.\ Example~\ref{ex:calculus}). Repeating this procedure for each element of $J \cap K$ in increasing order, we obtain that
\begin{align*}
\left(\prod_{i\in J \cap K} \varpi_i \right) \cdot \left( \prod_{q\in J \cup K} \varpi_q \right) = \sum_{\substack{L \supseteq J \cup K\\ |L|=|J|+|K|}} \left(\Biggl(\sum_{P \in \Delta_{J K}^L} \wt(P)  \Biggl) \prod_{q\in L} \varpi_q \right)
\end{align*}
by the construction of the left-right diagrams and their weights. Combining this with \eqref{eq: rational prod} and \eqref{eq: rational prod rewrite}, we obtain that 
\begin{align*} 
\varpi_J \cdot \varpi_K = \sum_{\substack{L \supseteq J \cup K\\ |L|=|J|+|K|}}  \left(\frac{m_L}{m_Jm_K}\sum_{P \in \Delta_{J K}^L} \wt(P) \right) \varpi_L,
\end{align*}
which implies the desired claim.
\end{proof}

\begin{example} \label{ex:structure_constant_diagram}
{\rm Let $n=10$ and take $J=\{1,3,5,6,7 \}$, $K=\{3,6,8 \}$ as in Example~\ref{ex:calculus}. We compute the coefficients in \eqref{eq:structure_constants_expansion} for the following two choices of $L$. Note that we have $m_J=3!$ and $m_K=1$.}

\begin{enumerate}
\item[(i)] {\rm For $L=\{1,2,3,4,5,6,7,8 \}$, we have $m_L=8!$, and the weights of left-right diagrams associated with the $(J,K,L)$ are computed in Example~$\ref{ex:diagram_weight}$. Hence we obtain that
\begin{align*}
d_{JK}^L=\frac{m_L}{m_Jm_K} \sum_{P \in \Delta_{J K}^L} \wt(P)=\frac{8!}{3!} \left( \frac{1}{2} \cdot \frac{3}{5} + \frac{1}{2} \cdot \frac{3}{7} \right)=3456
\end{align*}
which coincides with the coefficient of $\varpi_L=\varpi_{\{1,2,3,4,5,6,7,8 \}}$ in Example~$\ref{ex:calculus}$.}

\item[(ii)] {\rm For $L'=\{1,2,3,5,6,7,8,9 \}$, we have $m_{L'}=3!\cdot 5!$, and the weight of the left-right diagram associated with the $(J,K,L')$ are computed in Example~$\ref{ex:diagram_weight}$. Hence we obtain that
\begin{align*}
d_{JK}^{L'}=\frac{m_{L'}}{m_Jm_K} \sum_{P \in \Delta_{J K}^{L'}} \wt(P)=\frac{3!\cdot 5!}{3!} \left( \frac{1}{2} \cdot \frac{2}{5} \right)=24
\end{align*}
which coincides with the coefficient of $\varpi_{L'}=\varpi_{\{1,2,3,5,6,7,8,9 \}}$ in Example~$\ref{ex:calculus}$.}
\end{enumerate}
\end{example}

\begin{remark}
{\rm 
Theorem~\ref{theorem:LeftRight_diagram} provides the combinatorial description of the computation demonstrated in Example~$\ref{ex:calculus}$.
As we observed there, the geometric idea behind our computation is the realization of $\Omega_{J}$ by intersecting the divisors $E_i$. 
}
\end{remark}

\bigskip

\section{Relations to other works}
\label{sec: relations to other works}

In this section, we clarify how the results in this paper are related to other works in existing literatures. Especially, we explain the relations to the work of Goldin-Gorbutt (\cite{GoGo}) on \textit{Peterson Schubert calculus} and to the works of Berget-Spink-Tseng (\cite{Berget-Spink-Tseng}), Nadeau-Tewari (\cite{Nadeau-Tewari}), and the second author  (\cite{Horiguchi21}) on \textit{mixed Eulerian numbers}. We emphasize that \cite{Berget-Spink-Tseng, GoGo, Nadeau-Tewari} are announced earlier than this paper.

\subsection{Relations to Peterson Schubert calculus}
We begin with reviewing the motivation of Peterson Schubert calculus from \cite{BaHa, Dre2, GoGo, HaTy}. We first note that these papers studied the equivariant cohomology ring of $\Pet{n}$ with respect to the $\C^{\times}$-action explained in Section~\ref{subsect: combi}, but we focus on the ordinary cohomology ring to compare with our computation (see \cite{BaHa, Dre2, GoGo, HaTy} for the results in the equivariant cohomology). Recall that the dual Schubert variety $\Omega_w$ associated with $w \in \mathfrak{S}_n$ determines the homology cycle $[\Omega_w]$ in $H_*(Fl_n ;\Z)$. We denote by $\sigma_w \in H^{2\ell(w)}(Fl_n ;\Z)$ the Poincar\'e dual of $[\Omega_w]$, which is called the Schubert class associated with $w$. It is well-known that the set of Schubert classes $\{\sigma_w \mid w \in \Sn\}$ forms an additive basis of $H^*(Fl_n ;\Z)$. Thus we may express the product $\sigma_u \cdot \sigma_v$ as a linear combination of the Schubert classes:
\begin{align*} 
\sigma_u \cdot \sigma_v =\sum_{w \in \Sn} c_{uv}^w \, \sigma_w, \ \ \ c_{uv}^w \in \Z.
\end{align*}
Computations of the structure constants $c_{uv}^w$ is called \textit{Schubert calculus} on the flag variety $Fl_n $. Geometrically, $c_{uv}^w$ is the intersection number $\int_{Fl_n } (\sigma_u \cdot \sigma_v \cdot \sigma_{w_0w})$, and this implies the positivity for the structure constants, i.e., $c_{uv}^w \geq 0$ by Kleiman's transversality theorem (see e.g.\ \cite[Sect.~1.3]{Bri}).

Motivated by this, Harada and Tymoczko considered the following problem in \cite{HaTy}. Let $p_w \in H^*(\Pet{n};\C)$ denote the image of the Schubert class $\sigma_w \in H^*(Fl_n ;\C)$ under the restriction map $H^*(Fl_n ;\C) \to H^*(\Pet{n};\C)$. They called $p_w$ the \emph{Peterson Schubert class} corresponding to $w$. Since the restriction map $H^*(Fl_n ;\C) \to H^*(\Pet{n};\C)$ is surjective (\cite{HaTy,Insko}), it is natural to ask whether there exists a natural subset of Peterson Schubert classes $p_w$ which forms an additive basis of $H^*(\Pet{n};\C)$. They gave an answer to this question as follows.
Let $J = \{j_1 < j_2 < \dots < j_m \}$ be a subset of $[n-1]$. They defined the element $v_J \in \mathfrak{S}_n$ to be the product of simple transpositions whose indices are in $J$, in increasing order, that is,
\begin{align}\label{eq: def of vJ}
 v_J \coloneqq s_{j_1} s_{j_2} \cdots s_{j_m}. 
\end{align}

\begin{theorem} $($\cite[Theorem~4.12]{HaTy}$)$ \label{Theorem_HaradaTymoczko_basis}
The set $\{p_{v_J} \mid J \subseteq [n-1] \}$ forms a $\C$-basis of $H^*(\Pet{n};\C)$.
\end{theorem}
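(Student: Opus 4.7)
The plan is to prove the precise identity $p_{v_J} = \varpi_J$ in $H^{2|J|}(\Pet{n};\Z)$ for every $J \subseteq [n-1]$. Once this identity is in hand, Theorem~\ref{Theorem_HaradaTymoczko_basis} follows at once: Theorem~\ref{thm: Z-basis of cohomology of Peterson} gives that $\{\varpi_J \mid J \subseteq [n-1]\}$ is a $\Z$-basis of $H^*(\Pet{n};\Z)$, and the Harada--Tymoczko set inherits this property after extending scalars to $\C$ and using the identification $\{p_{v_J}\} = \{\varpi_J\}$ as subsets of $H^*(\Pet{n};\C)$.

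Both $p_{v_J}$ and $\varpi_J$ lie in $H^{2|J|}(\Pet{n};\Z)$. Since $\{[X_K]:|K|=|J|\}$ is a $\Z$-basis of $H_{2|J|}(\Pet{n};\Z)$ by Proposition~\ref{prop: homology basis}, and Proposition~\ref{prop: duality} gives $\langle[X_K],\varpi_J\rangle_{\Pet{n}} = \delta_{JK}$, it suffices to verify
$$\langle [X_K], p_{v_J}\rangle_{\Pet{n}} = \delta_{JK} \qquad \text{whenever } |K|=|J|.$$
By the pushforward formula for $\iota\colon\Pet{n}\hookrightarrow Fl_n$, this is equivalent to $\langle \iota_*[X_K], \sigma_{v_J}\rangle_{Fl_n}=\delta_{JK}$.

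The intended route is algebraic. The Schubert class $\sigma_{v_J}$ is represented by its Schubert polynomial $\mathfrak{S}_{v_J}(x_1,\ldots,x_{n-1})$; restricting to $\Pet{n}$ gives $p_{v_J}$ as the same polynomial in $x_i = \varpi_i - \varpi_{i-1}$ (with $\varpi_0 = 0$), and in $H^*(\Pet{n};\Z)$ the relations $\alpha_i\varpi_i = 0$ of Lemma~\ref{lem: vanishing product}, equivalently $2\varpi_i^2 = \varpi_{i-1}\varpi_i + \varpi_i\varpi_{i+1}$, force substantial simplifications. The claim is that modulo these relations $\mathfrak{S}_{v_J}$ collapses exactly to $\frac{1}{m_J}\prod_{i\in J}\varpi_i = \varpi_J$. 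A small check supports this: for $n=3$ and $J=\{1,2\}$, one has $\mathfrak{S}_{s_1 s_2} = x_1 x_2$, hence
$$p_{s_1 s_2} = \varpi_1(\varpi_2 - \varpi_1) = \varpi_1\varpi_2 - \varpi_1^2 = \tfrac12 \varpi_1\varpi_2 = \varpi_{\{1,2\}},$$
where $\varpi_1^2 = \tfrac12 \varpi_1\varpi_2$ follows from $\alpha_1\varpi_1 = (2\varpi_1 - \varpi_2)\varpi_1 = 0$.

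The main obstacle is to carry out this reduction uniformly for all $J$. I would induct on $|J|$, using Monk's rule to expand $p_{s_{j_1}}\cdots p_{s_{j_m}} = \varpi_{j_1}\cdots\varpi_{j_m}$ as $\sum_v c_v p_v$ over length-$|J|$ permutations $v$, observing that the length-$|J|$ elements of the Young subgroup $\mathfrak{S}_J$ number exactly $m_J = \prod_k |J_k|!$ (this is precisely the denominator appearing in $\varpi_J$); together with Lemma~\ref{lemm:AHKZ} and the relations $\alpha_i \varpi_i = 0$, this should allow a clean recursive identification. A complementary geometric route is to localize equivariantly: restrict both $p_{v_J}$ and $\varpi_J$ to each $\C^\times$-fixed point $w_K$ using Billey's formula (as developed in \cite{HaTy, Dre2, GoGo}) and compute $\varpi_J|_{w_K}$ directly, then lift the coincidence across the GKM-type injection $H^*_{\C^\times}(\Pet{n})\hookrightarrow \bigoplus_K H^*_{\C^\times}(\{w_K\})$ and specialize to ordinary cohomology.
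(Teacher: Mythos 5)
Your overall skeleton is sound and in fact mirrors the paper's own route: the paper imports this statement from \cite[Theorem 4.12]{HaTy}, and its way of recovering it (in the stronger integral form) is exactly your first step --- establish $p_{v_J}=\varpi_J$ and invoke Theorem~\ref{thm: Z-basis of cohomology of Peterson}; this is Corollary~\ref{corollary:AHKZ}. Your reduction via the perfect pairing, namely that it suffices to check $\langle [X_K],p_{v_J}\rangle_{\Pet{n}}=\delta_{JK}$ for $|K|=|J|$, is also legitimate given Propositions~\ref{prop: homology basis} and \ref{prop: duality}, and working modulo the relations $\alpha_i\varpi_i=0$ of Lemma~\ref{lem: vanishing product} is a valid framework.

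The gap is that the load-bearing step is never proved. The identity you need --- that the restriction of $\sigma_{v_J}$ to $\Pet{n}$ equals $\frac{1}{m_J}\prod_{i\in J}\varpi_i$, equivalently $\prod_{i\in J}p_{s_i}=m_J\,p_{v_J}$ --- is precisely the Giambelli formula of Bayegan--Harada (Theorem~\ref{theorem_Giambelli}), which is the entire difficulty and which the paper cites rather than reproves. Your $n=3$ computation is a single special case, and the inductive scheme is only asserted to "allow a clean recursive identification." Worse, the one concrete fact offered in its support is false: the number of length-$|J|$ elements of $\mathfrak{S}_J$ is not $m_J$ once some connected component has size at least $4$ (for a component of size $4$ one gets the $20$ permutations of length $4$ in $\mathfrak{S}_5$, not $4!=24$). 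In addition, iterating Monk's rule in $H^*(Fl_n;\Z)$ and restricting produces Peterson Schubert classes $p_w$ with $w$ not of the form $v_L$, which are generally nonzero (e.g.\ $p_{s_2s_1}=\varpi_1^2=\tfrac12\varpi_1\varpi_2$), and expanding these in the $p_{v_L}$ requires either the basis theorem you are trying to prove or the localization computations of \cite{HaTy,BaHa}; your "complementary geometric route" is essentially that actual proof, but it is left entirely as a sketch, and you would have to be careful not to quote \cite[Theorem 6.12]{HaTy} or the formulas of \cite{GoGo}, whose proofs rest on the basis theorem itself. As written, the proposal correctly reduces the theorem to the identity $p_{v_J}=\varpi_J$ but does not establish it; you must either give a genuine proof of the Giambelli-type identity or cite \cite{BaHa} for it, as the paper does.
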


By this theorem, we may expand the product $p_{v_J} \cdot p_{v_K}$ in terms of the Peterson Schubert classes $p_{v_L}$:
\begin{align} \label{eq:Schubert_calculus_Pet} 
p_{v_J} \cdot p_{v_K} =\sum_{L \subseteq [n-1]} \c_{JK}^L \, p_{v_L}, \ \ \ \c_{JK}^L \in \C.
\end{align}
Computing the structure constants $\c_{JK}^L$ is called \textit{Peterson Schubert calculus} in \cite{GoGo}. Harada and Tymoczko also gave Monk's formula for $\c_{JK}^L$ in \cite[Theorem~6.12]{HaTy}, which is the case for $|J|=1$. Recently, Goldin and Gorbutt gave combinatorial formulas for the structure constants $c_{JK}^L$ in \cite[Theorems~1,4,6,7]{GoGo} which are manifestly positive and integral. In particular, their formulas imply the positivity for the structure constants.

\begin{theorem} $($\cite[Corollary~8]{GoGo}$)$ \label{theorem:GoGo}
The structure constants $\c_{JK}^L$ in \eqref{eq:Schubert_calculus_Pet} are non-negative integers for all $J,K,L\subseteq[n-1]$.
\end{theorem}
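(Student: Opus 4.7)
The plan is to deduce Theorem~\ref{theorem:GoGo} from the earlier results of this paper by proving the identification
\[
p_{v_J} = \varpi_J \qquad \text{in } H^*(\Pet{n}; \C)
\]
for every $J \subseteq [n-1]$. Once this is granted, the expansion \eqref{eq:Schubert_calculus_Pet} coincides term-by-term with \eqref{eq:Schubert_calculus_for_varpi_Pet}, so the structure constants satisfy $\c_{JK}^L = \d_{JK}^L$. Integrality of $\c_{JK}^L$ is then automatic from Theorem~\ref{thm: Z-basis of cohomology of Peterson}, which asserts that $\{\varpi_J\}$ is a $\Z$-basis of $H^*(\Pet{n}; \Z)$ and hence forces $\d_{JK}^L \in \Z$; positivity is Proposition~\ref{prop: positivity}. (In fact, Theorem~\ref{theorem:LeftRight_diagram} provides a second, manifestly positive and integral combinatorial formula.)

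To prove the identification, my strategy is to use Proposition~\ref{prop: duality} together with the perfect Kronecker pairing to reduce the identity $p_{v_J}=\varpi_J$ to the claim
\[
\langle [X_K], p_{v_J} \rangle_{\Pet{n}} = \delta_{J,K} \qquad \text{for all } J,K \subseteq [n-1] \text{ with } |J|=|K|,
\]
because Proposition~\ref{prop: duality} shows $\varpi_J$ is the unique class of degree $2|J|$ that is dual to the homology basis $\{[X_K]\}$ of Proposition~\ref{prop: homology basis}. Using $p_{v_J} = i^* \sigma_{v_J}$ with $i\colon \Pet{n}\hookrightarrow Fl_n$ the inclusion, the projection formula reduces this to evaluating the intersection number
\[
\int_{Fl_n} i_*[X_K] \cdot \sigma_{v_J}.
\]

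The main obstacle is computing this intersection number, since $X_K$ is not itself a Schubert variety in $Fl_n$ and the classical Monk--Pieri formulas cannot be applied directly to $i_*[X_K]$. My plan is twofold. First, exploit the product decomposition $X_K \cong \Pet{n_1}\times\cdots\times \Pet{n_m}$ from Corollary~\ref{lem: decomp of PetJ}, together with the compatible factorization $X_{w_K} \cong Fl_{n_1}\times\cdots\times Fl_{n_m}$, to reduce to the connected-component case of a single $\Pet{n_k}$. Second, in the connected case, identify the contributing $\C^{\times}$-fixed points in $X_K \cap \Omega_{v_J}$ using the fixed-point description \eqref{eq: fixed points of Pet} together with the affine paving of $X_K$ from \eqref{eq: decomp of XJ} and Lemma~\ref{lem: intersection with dual cell}; these points should turn out to be nonempty exactly when $J=K$, and the normalization constants $m_J$ arising from Lemma~\ref{lem: geometry of mJ} should produce the correct $\delta_{J,K}$ rather than a rational multiple. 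Equivalently, one can run the Atiyah--Bott--Berline--Vergne localization formula, which is the approach most compatible with the fixed-point computations of Section~\ref{sec: Basic notations}.

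An alternative, more algebraic route is to expand $\sigma_{v_J}=\sigma_{s_{j_1}}\cdots\sigma_{s_{j_m}}$ in $H^*(Fl_n)$ via iterated Monk's rule and then restrict to $\Pet{n}$. The expected phenomenon is that the Peterson relations $\alpha_i\varpi_i=0$ of Lemma~\ref{lem: vanishing product} cause the Peterson Schubert classes $p_w$ for all permutations $w$ obtained in the expansion (necessarily with support $J$) to collapse to a single common class, which then equals $\varpi_J=(\prod_{j\in J}\varpi_j)/m_J$ with $m_J$ emerging as the number of reduced expressions of $w_J$ producing a given Peterson Schubert class. The hard part in either route is the same: one must either push through the intersection-theoretic computation on the singular variety $X_K$ without circular appeal to Peterson Schubert calculus, or track the Monk's-rule expansion with enough precision to recover the combinatorial factor $m_J$ from \eqref{eq: def of mJ}.
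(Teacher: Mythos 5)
Your overall deduction scheme is sound and in fact mirrors what the paper itself does later: once $p_{v_J}=\varpi_J$ is known, the expansions \eqref{eq:Schubert_calculus_Pet} and \eqref{eq:Schubert_calculus_for_varpi_Pet} coincide, so $\c_{JK}^L=\d_{JK}^L$, integrality follows from Theorem~\ref{thm: Z-basis of cohomology of Peterson}, and non-negativity from Proposition~\ref{prop: positivity} (this is precisely the content of Corollary~\ref{corollary:AHKZ}; note the paper does not prove Theorem~\ref{theorem:GoGo} itself but quotes it from Goldin--Gorbutt). Your reduction of $p_{v_J}=\varpi_J$ to $\langle [X_K],p_{v_J}\rangle_{\Pet{n}}=\delta_{J,K}$ via Proposition~\ref{prop: duality} and the perfect pairing is also legitimate, and the vanishing for $J\ne K$ can indeed be handled by a support/fixed-point argument as in Proposition~\ref{prop: point intersection 2}.

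The genuine gap is that the key identity itself is never established. Everything hinges on showing $\int_{Fl_n} i_*[X_J]\cdot\sigma_{v_J}=1$ (equivalently, that the intersection of $X_J$ with a representative of $\sigma_{v_J}$ is transverse of multiplicity one on the singular variety $X_J$), or on tracking the iterated Monk expansion of $\sigma_{s_{j_1}}\cdots\sigma_{s_{j_m}}$ after restriction and extracting the factor $m_J$ of \eqref{eq: def of mJ}; you explicitly concede that neither computation is carried out, and neither is routine --- the ABBV route needs equivariant classes and multiplicities at fixed points of a singular subvariety, and the Monk route needs a precise count of which $p_w$ collapse and with what coefficients. What you are trying to re-derive is exactly Giambelli's formula for Peterson varieties, $p_{v_J}=\frac{1}{m_J}\prod_{i\in J}p_{s_i}$, which the paper imports as Theorem~\ref{theorem_Giambelli} from Bayegan--Harada; combined with $p_{s_i}=i^*\sigma_{s_i}=\varpi_i$ this gives $p_{v_J}=\varpi_J$ immediately and closes your argument. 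As written, however, the proposal replaces the one nontrivial input by an unfinished plan, so it does not yet constitute a proof.
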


This theorem ensures that all the coefficients $\c_{JK}^L$ in \eqref{eq:Schubert_calculus_Pet}  are (non-negative) integers, but it is not obvious whether $\{p_{v_J}\in H^{2|J|}(\Pet{n};\Z) \mid J \subseteq [n-1]\}$ forms a $\Z$-basis of $H^*(\Pet{n};\Z)$. Moreover, it is natural to ask a geometric reason of this positivity for the structure constants $\c_{JK}^L$ (cf.\ \cite[Remark~3.4]{BaHa} and \cite[p.43, question~(2)]{HaTy}). In what follows, we give an answer to this question. Recall from \cite{BaHa} that we have Giambelli's formula for the Peterson Schubert classes.

\begin{theorem}$($Giambelli's formula for the Peterson variety, \cite[Theorem~3.2]{BaHa}$)$ \label{theorem_Giambelli}
For $J\subseteq[n-1]$, we have 
\begin{align} \label{eq:Giambelli}
 p_{v_J} = \frac{1}{|J_1| ! |J_2| ! \cdots |J_m| !} \prod_{i\in J} p_{s_i} ,
\end{align}
where $J_{k} \ (1 \leq k \leq m)$ are the the connected components of $J$.
\end{theorem}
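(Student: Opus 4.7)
The strategy is to reduce Giambelli's formula to the degree-two identification $p_{s_i} = \varpi_i$, after which the general formula becomes the assertion $p_{v_J} = \varpi_J$ by Definition~\ref{defi:varpi}.

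First I would establish $p_{s_i} = \varpi_i$. By definition, $p_{s_i}$ is the restriction along $i\colon \Pet{n} \hookrightarrow Fl_n$ of the Schubert class $\sigma_{s_i}$, which is the Poincar\'e dual of $[\Omega_{s_i}]$ in $Fl_n$. By Proposition~\ref{prop: zero locus}, $E_i = \Omega_{s_i} \cap \Pet{n}$ is the zero locus of the section $\psi_{\{i\}}$ of the line bundle $L_{\varpi_i}$ over $\Pet{n}$, so its Poincar\'e dual inside $\Pet{n}$ equals $e(L_{\varpi_i}) = \varpi_i$. Since $\Omega_{s_i}$ has codimension one in $Fl_n$ and $E_i$ has codimension one in $\Pet{n}$, the intersection is of the expected codimension, and compatibility of Poincar\'e duality with restriction gives $i^*\sigma_{s_i} = \varpi_i$.

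Granted this identification, $\prod_{i \in J} p_{s_i} = \prod_{i \in J}\varpi_i = m_J \varpi_J$ by Definition~\ref{defi:varpi}, so Giambelli's formula is equivalent to the equality $p_{v_J} = \varpi_J$ for all $J$. I would prove this equality by induction on $|J|$, with the base case $|J| \le 1$ handled by the previous paragraph. For the inductive step, Theorem~\ref{thm: Z-basis of cohomology of Peterson} and Proposition~\ref{prop: duality} provide the unique expansion $p_{v_J} = \sum_{|L|=|J|} \langle [X_L], p_{v_J}\rangle \, \varpi_L$, and it suffices to show $\langle [X_L], p_{v_J}\rangle = \delta_{JL}$. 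Using $X_L \cong \prod_{k=1}^m \Pet{n_k}$ from Corollary~\ref{lem: decomp of PetJ}, the pairing should decompose into analogous pairings on the factors.

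The main obstacle will be the block-wise analysis of this pairing: one must show that the restriction $\sigma_{v_J}|_{X_L}$ vanishes unless the block decompositions of $J$ and $L$ are compatible, and that for $J=L$ it reduces to the product of fundamental pairings $\langle [\Pet{n_k}], p_{v_{J_k}}\rangle = 1$ in each single-block Peterson variety. This matching reflects both the behaviour of the Bruhat order under the product decomposition (Lemma~\ref{lem: So-san's lemma}) and the factorisation of Schubert classes under the embedding $\prod_k Fl_{n_k} \hookrightarrow Fl_n$. Should this route prove too intricate, an alternative is to expand $p_{v_{J'}} \cdot p_{s_i}$ via Monk's formula of Harada--Tymoczko \cite[Theorem~6.12]{HaTy} for $J' = J \setminus \{i\}$ with $i \in J_m$, and compare with the expansion of $\varpi_{J'} \cdot \varpi_i$ produced by Theorem~\ref{theorem:LeftRight_diagram} under the inductive hypothesis, matching the coefficient of $\varpi_J$ on both sides.
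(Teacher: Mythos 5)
First, a point of comparison: the paper does not prove this statement at all --- it is quoted from Bayegan--Harada \cite[Theorem~3.2]{BaHa}, and in the paper the logic runs in the opposite direction to yours: Giambelli's formula, together with $p_{s_i}=\varpi_i$ and Theorem~\ref{thm: Z-basis of cohomology of Peterson}, is used to \emph{deduce} $p_{v_J}=\varpi_J$ (Corollary~\ref{corollary:AHKZ}). Your plan is to prove $p_{v_J}=\varpi_J$ independently and then read Giambelli off from Definition~\ref{defi:varpi}. That is a legitimate (and geometrically attractive) strategy, and your reduction is set up correctly: since $\langle [X_L],\varpi_K\rangle_{\Pet{n}}=\delta_{KL}$ by Proposition~\ref{prop: duality}, everything comes down to the single identity $\langle [X_L], p_{v_J}\rangle_{\Pet{n}}=\delta_{JL}$ for $|J|=|L|$. (Your first step is also fine in its conclusion, though the route through Poincar\'e duality inside the possibly singular variety $\Pet{n}$ is shakier than needed: $\sigma_{s_i}=\varpi_i=e(L_{\varpi_i})$ already in $H^2(Fl_n;\Z)$, so $p_{s_i}=\varpi_i$ follows by naturality of Euler classes, with no duality on $\Pet{n}$ required.)

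The genuine gap is that this decisive identity is never proved; you explicitly label it ``the main obstacle'' and leave it as a hope. The off-diagonal vanishing is the easier half (for $|J|=|L|$ and $J\ne L$ one has $J\not\subseteq L$, hence $v_J\not\le w_L$ and $\sigma_{v_J}$ restricts to zero on $X_{w_L}\supseteq X_L$), but the normalization $\int_{X_J}\sigma_{v_J}=1$ is real content: after the product decomposition of Corollary~\ref{lem: decomp of PetJ} it amounts to $\int_{\Pet{m}}\sigma_{s_1s_2\cdots s_{m-1}}=1$ for each block, which requires identifying the restriction of $\sigma_{v_J}$ to the Levi flag variety $X_{w_J}\cong\prod_k Fl_{n_k}$ and then an integral computation on $\Pet{m}$ analogous to Proposition~\ref{lem: Masuda lemma in fund} --- none of which is carried out. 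This missing computation is essentially the substance of the Bayegan--Harada/Harada--Tymoczko results the paper imports, so as written the proposal is a strategy outline rather than a proof. Your fallback via Monk's formula has its own unaddressed difficulty: comparing the Monk expansion of $p_{s_i}\cdot p_{v_{J'}}$ in the basis $\{p_{v_L}\}$ with the expansion of $\varpi_i\cdot\varpi_{J'}$ in the basis $\{\varpi_L\}$ (Lemma~\ref{lemm:AHKZ}) does not by itself yield $p_{v_L}=\varpi_L$ in the new degree, since the two sides are written in different bases; one would need, in addition, either an entrywise matching of the two Monk-type coefficient systems together with a rank or triangularity argument, or some other mechanism forcing the change-of-basis matrix to be the identity. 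Until either route is completed, the proof is incomplete.
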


\begin{remark}
Drellich gave Giambelli's formula for arbitrary Lie types in \cite{Dre2}.
\end{remark}

As is well-known, the Schubert class $\sigma_{s_i}$ can be written as $\sigma_{s_i} = x_1+\cdots+x_i=\varpi_i$ in $H^2(Fl_n;\Z)$, where $x_1,\ldots,x_n$ are defined in \eqref{eq:xi}. This implies that 
\begin{align*}
 p_{s_i}=\varpi_i \quad \text{in $H^2(\Pet{n};\Z)$},
\end{align*}
for $1\le i\le n-1$ by taking the restriction. Thus, the right hand side of \eqref{eq:Giambelli} is nothing but $\varpi_J$ in Definition~\ref{defi:varpi}.
As a consequence of Theorems~\ref{thm: Z-basis of cohomology of Peterson} and \ref{theorem_Giambelli}, we obtain the following result which explains the geometric background of the Peterson Schubert calculus.

\begin{corollary} \label{corollary:AHKZ}
For $J\subseteq[n-1]$, we have $p_{v_J} = \varpi_J$. 
In particular, the set \[\{p_{v_J} \in H^{2|J|}(\Pet{n};\Z) \mid J \subseteq [n-1] \}\]
forms a $\Z$-basis of $H^*(\Pet{n};\Z)$.
Moreover, the structure constant $\c_{JK}^L$ in \eqref{eq:Schubert_calculus_Pet} is equal to the structure constant $\d_{JK}^L$ in Theorem~$\ref{theorem:LeftRight_diagram}$.
\end{corollary}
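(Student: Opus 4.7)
The plan is to derive the identity $p_{v_J} = \varpi_J$ by directly combining the Bayegan--Harada Giambelli formula with the classical expression for $\sigma_{s_i}$, and then read off the remaining two statements as easy consequences.

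First I would verify the degree-two identification. Recall from \eqref{eq:xi} and \eqref{eq: pi and x} that $\varpi_i = x_1 + x_2 + \cdots + x_i$ in $H^2(Fl_n;\Z)$. On the other hand, it is standard (see \cite[Sect.~10]{fult97}) that the Schubert class associated with a simple reflection is $\sigma_{s_i} = x_1 + \cdots + x_i$. Hence $\sigma_{s_i} = \varpi_i$ in $H^2(Fl_n;\Z)$, and by the very definition of the Peterson Schubert class (restriction of $\sigma_w$ along $i\colon \Pet{n}\hookrightarrow Fl_n$), this yields $p_{s_i} = \varpi_i$ in $H^2(\Pet{n};\Z)$ for each $1 \le i \le n-1$.

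Next I would invoke the Bayegan--Harada Giambelli formula (Theorem~\ref{theorem_Giambelli}) and substitute: for each $J \subseteq [n-1]$ with connected components $J_1\sqcup\cdots\sqcup J_m$,
\[
p_{v_J} \;=\; \frac{1}{|J_1|!\,|J_2|!\cdots|J_m|!}\prod_{i\in J} p_{s_i} \;=\; \frac{1}{m_J}\prod_{i\in J}\varpi_i \;=\; \varpi_J,
\]
where the last equality is Definition~\ref{defi:varpi} and $m_J$ is the integer defined in \eqref{eq: def of mJ}. This establishes the first (and principal) assertion.

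The remaining claims are immediate corollaries. Since Theorem~\ref{thm: Z-basis of cohomology of Peterson} asserts that $\{\varpi_J \mid J\subseteq[n-1]\}$ is a $\Z$-basis of $H^*(\Pet{n};\Z)$, the identification $p_{v_J}=\varpi_J$ transports this to the set $\{p_{v_J}\}$, proving the basis statement. Finally, the equality $\c_{JK}^L = \d_{JK}^L$ follows because both expansions \eqref{eq:Schubert_calculus_Pet} and \eqref{eq:Schubert_calculus_for_varpi_Pet} now refer to the same element expanded in the same basis, so uniqueness of the coefficients (a consequence of $\Z$-linear independence) forces the structure constants to coincide. There is no real obstacle here: the entire argument rests on recognizing that $p_{s_i}$ and $\varpi_i$ have the same degree-two expression, after which Giambelli does all of the work.
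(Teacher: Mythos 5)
Your proposal is correct and follows essentially the same route as the paper: identify $p_{s_i}=\varpi_i$ via $\sigma_{s_i}=x_1+\cdots+x_i$, substitute into the Bayegan--Harada Giambelli formula to get $p_{v_J}=\varpi_J$, and then read off the $\Z$-basis statement from Theorem~\ref{thm: Z-basis of cohomology of Peterson} and the equality $\c_{JK}^L=\d_{JK}^L$ from uniqueness of coefficients in that basis. No gaps.
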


\begin{remark}
This implies that Lemma~$\ref{lemm:AHKZ}$ is essentially a special case of Monk's formula \cite[Theorem~$6.12$]{HaTy}. 
\end{remark}

\vspace{10pt}

By Corollary~\ref{corollary:AHKZ}, the structure constants $\d_{JK}^L$ can also be computed by the formulas for $\c_{JK}^L$ proved earlier by Goldin-Gorubtt \cite{GoGo} in the $\C^{\times}$-equivariant setting (see Section~\ref{subsect: combi}).
Their approach to the structure constants is mostly combinatorial whereas our approach is geometric based on the properties of $X_J$ and $\Omega_J$. We end this subsection by giving a short observation on the difference of their formulas and ours.

Suppose that $J,K,L\subseteq[n-1]$ are all connected subsets such that $J\cup K\subseteq L$, $|L|=|J|+|K|$. Then, we may write $J=[a_1,a_2]$, $K=[b_1,b_2]$, $L=[c_1,c_2]$, and we may assume that $a_1\le b_1$ by interchanging the roles of $J$ and $K$ if necessary.
In this case, 
their formula (\cite[Corollary~2]{GoGo}) for $\c_{JK}^L$ is quite simple: 
\begin{align*}
 \c_{JK}^L = \binom{a_2-b_1+1}{a_1-c_1}\binom{b_2-a_1+1}{b_1-c_1}.
\end{align*}
For general $J,K,L\subseteq[n-1]$, their computation of $\c_{JK}^L$ consists of three (ordered) formulas (\cite[Theorems 3, 5, 6]{GoGo}) each of which successively makes a reduction to the computations in the former case.

In contrast, our formula has several terms even when $J,K,L\subseteq[n-1]$ are all connected, however it provides a single formula which covers all the cases of general $J,K,L\subseteq[n-1]$.

\vspace{10pt}

\subsection{Relations to mixed Eulerian numbers}
We next explain the relations of the results in this paper to the works on mixed Eulerian numbers introduced and studied by Postnikov (\cite{Postnikov09}).

We briefly recall the definition of mixed Eulerian numbers.
For $a_1,\ldots,a_n\in\R^n$, the permutohedron $P_n(a_1,\ldots,a_n)$ is defined to be the convex hull of the $\Sn$-orbits of $(a_1,\ldots,a_n)$ in $\R^n$:
\begin{align*}
 P_n(a_1,\ldots,a_n) = \text{ConvexHull}\{(a_{w(1)},\ldots,a_{w(n)})\in\R^n\mid w\in \Sn\}.
\end{align*}
This is at most $(n-1)$-dimensional, and it sits inside of an affine hyperplane in $\R^n$. The $(n-1)$-dimensional volume (computed by projecting down to $\R^{n-1}$) of $P_n(a_1,\ldots,a_n)$ in terms of $u_i=a_i-a_{i+1}$ for $1\le i\le n-1$ can be written as
\begin{align*}
 \Vol P_n(a_1,\ldots,a_n) = \sum_{c_1,\ldots,c_{n-1}} A_{c_1,\ldots,c_{n-1}} \frac{u_1^{c_1}}{c_1!}\cdots \frac{u_{n-1}^{c_{n-1}}}{c_{n-1}!} ,
\end{align*}
where the sum is taken over all non-negative integers $c_1,\ldots,c_{n-1}$ with $c_1+\cdots+c_{n-1}=n-1$.
The coefficients $A_{c_1,\ldots,c_{n-1}}$ are called \textit{mixed Eulerian numbers} which are known to be non-negative integers (see \cite{Postnikov09} for details).\\

In \cite{Berget-Spink-Tseng}, Berget-Spink-Tseng studied log-concavity of matroid $h$-vectors in relation to mixed Eulerian numbers.
For that purpose, they considered the invariant subring of the Chow ring of the permutohedral variety with respect to the action of the symmetric group.
They introduced a basis $\delta_S$ of this invariant subring, and they proved that the structure constants of this basis can be written as products of mixed Eulerian numbers (\cite[Proposition~7.7 and Corollary 7.9]{Berget-Spink-Tseng}).
This invariant subring is known to be isomorphic to $H^*(\Pet{n};\Z)$ by \cite[Theorem~1.1]{AZ} (cf.\ \cite[Theorem B]{AHHM} for $\Q$-coefficients), and one can see that their basis corresponds to $\varpi_J$ in $H^*(\Pet{n};\Z)$ (compare \cite[Corollary~7.9]{Berget-Spink-Tseng} and Lemma~$\ref{lemm:AHKZ}$ in this paper).
Therefore, our formula (Theorem~\ref{theorem:LeftRight_diagram}) can also be regarded as computing some products of mixed Eulerian numbers by using the geometry of $\Pet{n}$.

Nadeau-Tewari (\cite{Nadeau-Tewari}) also found a relation between mixed Eulerian numbers and intersection numbers of Schubert varieties and the permutohedral variety for an arbitrary Lie type. After \cite{Berget-Spink-Tseng} and \cite{Nadeau-Tewari}, the second author of this paper investigated in \cite{Horiguchi21} a connection between Peterson Schubert calculus and mixed Eulerian numbers. More precisely, it was shown that the mixed Eulerian numbers can be written as intersection numbers of Schubert divisors in Peterson variety for an arbitrary Lie type (\cite[Theorem~1.1]{Horiguchi21}). 
We remark that, for type A, this formula was proved in \cite{Berget-Spink-Tseng} and \cite{Nadeau-Tewari} independently. 
Including this paper, all of these works are done independently, and these established connections between Peterson Schubert calculus and mixed Eulerian numbers. 

To end this paper, let us lastly deduce the formula for $d_{JK}^L$ in terms of mixed Eulerian numbers in the context of Peterson Schubert calculus.
For $J,K,L\subseteq[n-1]$, recall from \eqref{eq: djkl in terms of pairing} that we have
\begin{align*}
\d_{JK}^L 
&= \langle [X_{L}] , \varpi_{J} \cdot \varpi_{K} \rangle_{\Pet{n}} 
= \frac{1}{m_J}
\frac{1}{m_K} \int_{X_L} \left(\prod_{j\in J}\varpi_j\right)\left(\prod_{k\in K}\varpi_k\right)
\end{align*}
if $|J|+|K|=|L|$ and that we have $d_{JK}^L=0$ if $|J|+|K|\ne |L|$.
Taking the decomposition $L=L_1\sqcup \cdots \sqcup L_q$ into the connected components of $L$, we have $X_L=\prod_{i=1}^q X_{L_i}$ by Corollary~\ref{lem: decomp of PetJ}.
Hence, the integration over $X_L$ above can be written as a product of integrations over $X_{L_i}$ for $1\le i\le q$ ; 
\begin{align*}
\int_{X_L} \left(\prod_{j\in J}\varpi_j\right)\left(\prod_{k\in K}\varpi_k\right)
=
\prod_{i=1}^q\ 
\int_{X_{L_i}} \!\!\left(\prod_{j\in J\cap L_i}\varpi_j\right)\left(\prod_{k\in K\cap L_i}\varpi_k\right).
\end{align*}
Denoting $\ell_i\coloneqq |L_i|+1$, 
we have $X_{L_i}\cong \Pet{\ell_i}$ by Corollary~\ref{lem: decomp of PetJ} again.
Namely, each integration in the last equality is an intersection number of divisors on $\Pet{\ell_i}$.
We note that under this isomorphism $\varpi_r\in H^*(\Pet{\ell_i};\Q)$ $(1\le r\le |L_i|)$ corresponds to $\varpi_{r+\min L_i -1}\in H^*(X_{L_i};\Q)$ since we have $\Pet{\ell_i}\subseteq Fl(\C^{\ell_i})$ and $X_{L_i}\subseteq \Pet{n}\subseteq Fl(\C^n)$.
As explained above, the second author gave a formula which computes those intersection numbers as mixed Eulerian numbers (\cite[Theorem~1.1]{Horiguchi21}).
By applying it to the integrations above, we obtain the following formula for which we take the convention that $A_{c_1,\ldots,c_{p}}=0$ unless $c_1+\cdots+c_{p}=p$ for positive integers $p$.\\

\begin{theorem}\label{thm: mixed Eulerian}
For $J,K,L\subseteq[n-1]$, we have 
\begin{align*}
 d_{JK}^L = 
 \frac{1}{m_J}\frac{1}{m_K} \prod_{i=1}^q A_{c^{(i)}_1,\ldots,c^{(i)}_{\ell_i -1}},
\end{align*}
where $L=L_1\sqcup \cdots \sqcup L_q$ is the decomposition into the connected components of $L$ and $c^{(i)}_1,\ldots,c^{(i)}_{\ell_i -1}$ are the multiplicities of the product $(\prod_{j\in J\cap L_i}\varpi_j)(\prod_{k\in K\cap L_i}\varpi_k)$ given by
\begin{align*}
 c^{(i)}_r \coloneqq
 \begin{cases}
  2 \quad &\text{if $r +\min L_i -1\in J\cap K$}, \\
  1 &\text{if $r +\min L_i -1\in (J\cup K)-(J\cap K)$}, \\
  0 &\text{otherwise}
 \end{cases}
\end{align*}
for $1\le i\le q$ and  $1\le r\le |L_i|$ $($which means $r +\min L_i -1\in L_i$$)$. 
\end{theorem}

\begin{remark}
As we noted above, this formula can also be deduced from \cite[Proposition~7.7]{Berget-Spink-Tseng}.
\end{remark}

\begin{remark}
The indexes of the mixed Eulerian numbers appearing in Theorem~$\ref{thm: mixed Eulerian}$ are always less than or equal to $2$. 
In \cite{Berget-Spink-Tseng} and \cite{Horiguchi21}, mixed Eulerian numbers with arbitrary indexes are considered.
\end{remark}


\begin{thebibliography}{10}
\bibitem{ab-de-ga-ha}
	H. Abe, L. DeDieu, F. Galetto, and M. Harada, 
	\emph{Geometry of Hessenberg varieties with applications to Newton-Okounkov bodies}, 
	Selecta Math. (N.S.) \textbf{24}(3) (2018), 2129--2163.
\bibitem{ab-fu-ze}
	H. Abe, N. Fujita, and H. Zeng,
	\emph{Geometry of regular Hessenberg varieties}, 
	Transform.\ Groups {\bf 25} (2020), no.\ 2, 305--333.
\bibitem{ab-fu-ze2}
	H. Abe, N. Fujita, and H. Zeng,
	\emph{Fano and weak Fano Hessenberg varieties}, 
	to appear in Michigan Math. J.
\bibitem{AHHM}
	H. Abe, M. Harada, T. Horiguchi, and M. Masuda,
	\emph{The cohomology rings of regular nilpotent Hessenberg varieties in Lie type A}, 
	Int. Math. Res. Not., \textbf{2019}(17) (2019), 5316--5388.
\bibitem{AHMMS}
	T. Abe, T. Horiguchi, M. Masuda, S. Murai, and T. Sato,
	\emph{Hessenberg varieties and hyperplane arrangements},
	J. Reine Angew.\ Math.\ {\bf 764} (2020), 241--286.
\bibitem{AZ}
	H. Abe and H. Zeng,
	\emph{The integral cohomology rings of Peterson varieties in type A},
	arXiv:2203.02629.
\bibitem{an-ty}
	D. Anderson and J. Tymoczko, 
	\emph{Schubert polynomials and classes of Hessenberg varieties}, 
	J. Algebra \textbf{323} (2010), no. 10, 2605--2623. 
\bibitem{Ba}
A. Balibanu, 
	\emph{The Peterson Variety and the Wonderful Compactification}, 
	Represent. Theory \textbf{21} (2017), 132--150.
\bibitem{BaHa}
	D. Bayegan and M. Harada,
	\emph{A Giambelli formula for the $S^1$-equivariant cohomology of type A Peterson varieties},
	Involve \textbf{5} (2012), no. 2, 115--132.
\bibitem{Berget-Spink-Tseng}
	A. Berget, H. Spink, and D. Tseng
	\emph{Log-concavity of matroid h-vectors and mixed Eulerian numbers},
	arXiv:2005.01937.
\bibitem{Bri}
	M. Brion, \textit{Lectures on the geometry of flag varieties}, 
	in Topics in Cohomological Studies of Algebraic Varieties, Trends Math., Birkh\"{a}user, Basel, 2005, 33--85.
\bibitem{De Mari-Procesi-Shayman}
	F. De Mari, C. Procesi and M. A. Shayman,
	\emph{Hessenberg varieties}, 
	Trans. Amer. Math. Soc. 332 (1992),  no. 2, 529-534. 
\bibitem{Dre1}
	E. Drellich,
	\emph{Combinatorics of equivariant cohomology: Flags and regular nilpotent Hessenberg varieties}, 
	PhD thesis, University of Massachusetts, 2015.
\bibitem{Dre2}
	E. Drellich, 
	\emph{Monk's rule and Giambelli's formula for Peterson varieties of all Lie types}, 
	J. Algebraic Combin. \textbf{41} (2015), no. 2, 539--575.
\bibitem{fu-ha-ma}
	Y. Fukukawa, M. Harada, and M. Masuda, 
	\emph{The equivariant cohomology rings of Peterson varieties}, 
	J. Math. Soc. Japan \textbf{67} (2015), no. 3, 1147--1159. 
\bibitem{fult97}
	W. Fulton, 
	\emph{Young Tableaux}, 
	London Mathematical Society Student Texts, 35. Cambridge University Press, Cambridge.
\bibitem{Fulton}
	W. Fulton, 
	\emph{Intersection theory}, 
	Second edition. Results in Mathematics and Related Areas. 3rd Series. A Series of Modern Surveys in Mathematics, 2. Springer-Verlag, Berlin, 1998.
\bibitem{GoGo}
	R. Goldin and B. Gorbutt, 
	\emph{A positive formula for type A Peterson Schubert calculus},
	Matematica \textbf{1} (2022), no. 3, 618--665.
\bibitem{ha-ho-ma}
	M. Harada, T. Horiguchi, and M. Masuda, \emph{The equivariant cohomology rings of Peterson varieties in all Lie types}, 
	Canad. Math. Bull. \textbf{58} (2015), no. 1, 80--90. 
\bibitem{HaTy}
	M. Harada and J. Tymoczko, \emph{A positive Monk formula in the $S^1$-equivariant cohomology of type $A$ Peterson varieties}, 
	Proc. Lond. Math. Soc. (3) \textbf{103} (2011), no. 1, 40--72.
\bibitem{Horiguchi21}
	Tatsuya Horiguchi,
	\emph{Mixed Eulerian numbers and Peterson Schubert calculus}, 
	Int. Math. Res. Not., https://doi.org/10.1093/imrn/rnad030.
\bibitem{Humphreys}
	J. Humphreys, 
	\emph{Linear algebraic groups},
	Graduate Texts in Mathematics, No. 21. Springer-Verlag, New York-Heidelberg, 1975. 
\bibitem{Insko}
	E. Insko,
	\emph{Schubert calculus and the homology of the Peterson variety}, 
	Electron. J. Combin. \textbf{22} (2015), no.2, Paper 2.26, 12 pp.
\bibitem{Insko-Tymoczko}
	E. Insko and J. Tymoczko, \emph{Intersection theory of the Peterson variety and certain singularities of Schubert varieties}, 
	Geom. Dedicata \textbf{180} (2016), 95--116.
\bibitem{Kostant}
	B. Kostant, 
	\emph{Flag manifold quantum cohomology, the Toda lattice, and the representation with highest weight $\rho$},
	Selecta Math. (N.S.) \textbf{2} (1996), no. 1, 43--91.
\bibitem{Lazarsfeld}
	R. Lazarsfeld,
	\emph{Positivity in algebraic geometry. I},
	Ergebnisse der Mathematik und ihrer Grenzgebiete. 3. Folge. A Series of Modern Surveys in Mathematics, 48. Springer-Verlag, Berlin, 2004.
\bibitem{Milnor-Stasheff}
	J. Milnor and D. Stasheff, 
	\emph{Characteristic classes}, 
	Annals of Mathematics Studies, No. 76. Princeton University Press, Princeton, N. J.; University of Tokyo Press, Tokyo, 1974.
\bibitem{Nadeau-Tewari}
	P. Nadeau and V. Tewari,
	\emph{The permutahedral variety, mixed Eulerian numbers, and principal specializations of Schubert polynomials},
	Int. Math. Res. Not., https://doi.org/10.1093/imrn/rnab337.
\bibitem{Peterson}
	D. Peterson, 
	\emph{Quantum cohomology of $G/P$},
	Lecture course, M.I.T., Spring term, 1997. 
\bibitem{Postnikov09}
	A. Postnikov, 
	\emph{Permutohedra, associahedra, and beyond}, 
	Int. Math. Res. Not. IMRN., 2009, no. 6, 1026--1106.
\bibitem{Rietsch06}
	K. Rietsch, 
	\textit{A mirror construction for the totally nonnegative part of the Peterson variety},
	Nagoya Math. J. \textbf{183} (2006), 105--142.
\bibitem{Teff11}
	N. Teff, 
	\emph{Representations on Hessenberg varieties and Young's rule}, 
	23rd International Conference on Formal Power Series and Algebraic Combinatorics (FPSAC 2011), 903--914, Discrete Math. Theor. Comput. Sci. Proc., AO, Assoc. Discrete Math. Theor. Comput. Sci., Nancy, 2011.
\bibitem{ty}
	J. Tymoczko, 
	\emph{Linear conditions imposed on flag varieties}, 
	Amer. J. Math. \textbf{128} (2006), no. 6, 1587--1604. 
\end{thebibliography}
\end{document}